\documentclass[oneside,12pt,a4paper]{article}
\usepackage{amsmath}
\usepackage{amsthm,verbatim,bbm,amsfonts, yfonts, amssymb,newclude,nicefrac,enumerate,bm,mathabx}
\usepackage{geometry}
\geometry{a4paper,top=20mm,left=20mm,right=15mm,bottom=25mm,headsep=2mm,footskip=8mm}
  \usepackage{paralist}
  \usepackage{graphics}
\usepackage[colorlinks=true]{hyperref}

\hypersetup{urlcolor=blue, citecolor=red}

\usepackage[shortlabels,inline]{enumitem}

\setcounter{page}{1}

\usepackage[capitalise]{cleveref}

\usepackage[backend=biber, style=numeric,
maxbibnames=99]{biblatex} 
\addbibresource{bibfile.bib}

\newtheorem{theorem}{Theorem}[section]
\newtheorem{lemma}[theorem]{Lemma}

\newtheorem{corollary}[theorem]{Corollary}

\newtheorem{setting}[theorem]{Setting}

\theoremstyle{definition}

\usepackage{mleftright}

\usepackage{mathtools}

\DeclarePairedDelimiter{\abs}{\lvert}{\rvert}
\DeclarePairedDelimiter{\norm}{\lVert}{\rVert}

\newcommand{\E}{\mathbb{E}}

\newcommand{\R}{\mathbb{R}}
\newcommand{\N}{\mathbb{N}}

\newcommand{\cK}{\mathcal{K}}

\renewcommand{\d}{{\mathrm d}}


\newcommand{\normm}[1]{ \big\lVert #1 \big\rVert }
\newcommand{\normmm}[1]{ \Big\lVert #1 \Big\rVert }


\title{
  \vspace{-1cm}
  On existence and uniqueness properties
  for \\
  solutions of stochastic 
  fixed point equations \\
  with gradient-dependent nonlinearities}

\author{
Martin Hutzenthaler$^1$
and 
Katharina Pohl$^2$
\bigskip
\\
\small{$^1$ Faculty of Mathematics, University of Duisburg-Essen,}
\vspace{-0.1cm}\\
\small{Essen, Germany, e-mail: \texttt{martin.hutzenthaler}\textcircled{\texttt{a}}\texttt{uni-due.de}}
\smallskip
\\
\small{$^2$ Faculty of Mathematics, University of Duisburg-Essen,}
\vspace{-0.1cm}\\
\small{Essen, Germany, e-mail: \texttt{katharina.pohl}\textcircled{\texttt{a}}\texttt{uni-due.de}}
}

\begin{document}
\date{\today}
\maketitle


\begin{abstract}
\hskip -.2in
\noindent
The combination of the Itô formula
and the Bismut-Elworthy-Li formula
implies that suitable smooth solutions of semilinear Kolmogorov partial differential equations (PDEs) are also solutions to certain stochastic fixed point equations (SFPEs). 
In this paper we generalize
known results on
existence and uniqueness of solutions of SFPEs associated with 
PDEs with Lipschitz continuous,
gradient-independent nonlinearities
to the case of gradient-dependent 
nonlinearities.
The main challenge arises from the fact that
in the case of a non-differentiable terminal condition and a gradient-dependent
nonlinearity 
the Bismut-Elworthy-Li formula leads to a
singularity of the solution of the SFPE in the last time point.

\end{abstract}

\newpage
\tableofcontents

\newpage

\section{Introduction}

The well-known Feynman-Kac formula
forges a strong connection 
between classical solutions of
semilinear Kolmogorov partial
differential equations (PDEs)
and 
stochastic differential equations (SDEs).
In particular, 
if $X^x_t\colon [t,T]\times\R^d\to\R^d$,
$t\in [0,T]$, $x\in\R^d$,
solve the SDE
\begin{equation}
  d X^x_{t,s} = \mu(s,X^x_{t,s})\,\d s
  +\sigma(s,X^x_{t,s}) \,\d W_s,
  \qquad  s\in [t,T],
  \, X^x_{t,t} = x, 
\end{equation}
and $u\in C^{1,2}([0,T]\times \R^d, \R^d)$
satisfies for all 
$s\in [0,T]$, $y\in\R^d$ that
$u(T,y)=g(y)$ and 
\begin{equation}
\label{eq:pde}
  \big(\tfrac{\partial u}{\partial t}
  + (\nabla_x u)\mu
  +\tfrac{1}{2}\langle \sigma,
  (\operatorname{Hess}_x u)\sigma\rangle
  \big)(s,y) +
  f(s,y,u(s,y), (\nabla_x u)(s,y)) =0,
\end{equation}
then 
the Itô formula
demonstrates for all $t\in [0,T]$,
$s\in [t,T]$,
$x\in\R^d$ that a.s.\!
\begin{equation}
   u(s,X^x_{t,s})= g(X^x_{t,T})+\int_s^T f(r,X^x_{t,r},u(r,X^x_{t,r}), (\nabla_x u)(r,X^x_{t,r})) \,\d r
   -\int_s^T ((\nabla_x u)\sigma)(r,X^x_{t,r}) \,\d W_r.
\end{equation}
This leads to the following stochastic
fixed point equation (SFPE)
\begin{equation}
\label{eq:idea}
  u(t,x)= \E\Big[g(X^x_{t,T})+\int_t^T f(r,X^x_{t,r},u(r,X^x_r), (\nabla_x u)(r,X^x_{t,r})) \,\d r\Big],
  \qquad t\in [0,T], \, 
  X^x_{t,t}=x.
\end{equation} 
PDEs of the form  \eqref{eq:pde} often
do not admit a classical solution
(cf.\! \cite{HHJ2015}).
For this reason we concentrate on
finding solutions of the related
SFPE \eqref{eq:idea} instead.
If the nonlinearity $f$
of the SFPE in \eqref{eq:idea}
is gradient-independent,
then the SFPE in
\eqref{eq:idea} is closed 
and can be solved with 
fixed point methods. 
In this case and under the additional 
assumption of Lipschitz continuity 
of the nonlinearity, 
the results in 
\cite{BGHJ2019}
show the existence 
of a unique solution of the SFPE
in \eqref{eq:idea}.

In this article we 
extend the results in \cite{BGHJ2019}
to gradient-dependent, 
Lipschitz continuous nonlinearities.
In this case,
the SFPE in 
\eqref{eq:idea} is not closed.
Our attempt to solve the SFPE in 
\eqref{eq:idea} follows the idea
that -under suitable assumptions- 
applying the Bismut-Elworthy-Li formula
to \eqref{eq:idea} 
shows for all $t\in[0,T)$ that
\begin{equation}
\label{eq:idea_BEL}
\begin{split}
  &\big( \nabla_x u \big)(t,x)
  = \E\Big[g(X^x_{t,T})\tfrac{1}{T-t}\int_t^T
  (\sigma(r,X^x_{t,r}))^{-1}
  \big(\tfrac{\partial}{\partial x}X^x_{t,r}\big)
  \,\d W_r\Big]\\
  &\qquad +\E\Big[\int_t^T f(r,X^x_{t,r},u(r,X^x_{t,r}), (\nabla_x u)(r,X^x_{t,r}))
  \tfrac{1}{s-t} \int_t^s (\sigma(r,X^x_{t,r}))^{-1}
  \big(\tfrac{\partial}{\partial x}X^x_{t,r}\big)
  \,\d W_r \,\d s\Big]
\end{split}
\end{equation}
(cf., e.g., \cite[Theorem 2.1]{ElworthyLi}).
The goal of this paper is to prove
the existence of a unique solution
of the closed system of SFPEs \eqref{eq:idea}
and \eqref{eq:idea_BEL}
for a Lipschitz continuous, 
gradient-independent nonlinearity.
The main
challenge of this approach arises
from the fact that a non-differentiable terminal condition and 
the Bismut-Elworthy-Li
formula lead to a singularity
of \eqref{eq:idea_BEL}
in the last time point. 
To still follow the idea of \cite{BGHJ2019}
to prove the existence and uniqueness 
of the solution of the SFPE 
by Banach's fixed point theorem
it is necessary to define a new, 
suitable norm 
of the Banach space on which
Banach's fixed point theorem is applied
(cf.\! Theorem~\ref{thm:ex_cont_solution}).

Another challenge 
comes from allowing 
non-Lipschitz continuous drift 
coefficients which only
satisfy a one-sided
Lipschitz condition. 
This generalized assumption
allows 
our results to solve 
an even bigger class of
SFPEs and their 
associated PDEs. 
To illustrate the findings 
of this article, we present 
Theorem~\ref{thm:main_thm}
below 
which shows 
existence and uniqueness
of solutions of SFPEs associated with
suitable Kolmogorov PDEs
with Lipschitz continuous and
gradient-dependent nonlinearities
within the class of at most 
polynomially growing continuous
functions
and is 
a special case of our 
main result, 
Theorem~\ref{thm:ex_cont_sol_Z}.

\begin{theorem}
\label{thm:main_thm}
  Let $d \in \N$,
  $ c, T \in (0, \infty)$,
  let $\norm{\cdot}\colon\R^d\to[0,\infty)$, 
  $\lvert\lvert\lvert\cdot\rvert\rvert\rvert\colon\R^{d+1}\to[0,\infty)$,
  $\norm{\cdot}_F\colon\R^{d\times d}\to[0,\infty)$ be norms,
  let $(\Omega, \mathcal{F}, \mathbb{P},
  (\mathbb{F}_s)_{s \in [0,T]})$
  be a filtered probability space,
  let $\mu \in  C^{2}(\R^d, \R^d)$, 
  $\sigma \in C^{2} (\R^d, \R^{d \times d})$
  satisfy for all  
  $x, y, v \in \R^d$
  that
  $v^* \sigma(x) (\sigma(x))^* v 
  \geq \frac{1}{c}\norm{v}^{2}$
  and
  \begin{equation}
  \max\Big\{(x-y)^*(\mu(x)-\mu(y)), 
  \tfrac{1}{2}\norm{\sigma(x)-\sigma(y)}_F^2\Big\}
  \leq c\norm{x-y}^2,
  \end{equation}
  for every $t\in[0,T]$, 
  $x \in \R^d$ let
  $X_t^x = (X^{x}_{t,s})_{s \in [t,T]} \colon [t,T] \times \Omega \to \R^d$ be an 
  $(\mathbb{F}_s)_{s \in [t,T]}$-adapted 
  stochastic process with continuous 
  sample paths satisfying that 
  for all $s \in [t,T]$ 
  it holds a.s.\! that
  \begin{equation}\label{eq:SDE}
  X^x_{t,s} = x + \int_t^s \mu( X^x_{t,r}) \,\d r 
  + \int_t^s \sigma(X^x_{t,r}) \,\d W_r,
  \end{equation}
  assume for all $t\in [0,T]$,
  $\omega \in \Omega$
  that
  $\left([t,T] \times \R^d \ni (s,x) 
  \mapsto X^x_{t,s}(\omega) \in \R^d \right) \in C^{0,1}([t,T] \times \R^d, \R^d)$,
  for every $t\in[0,T]$, 
  $x \in \R^d$ let
  $Z^x_t = (Z^x_{t,s})_{s \in (t,T]} 
  \colon (t,T] \times \Omega \to \R^{d+1}$ 
  be an $(\mathbb{F}_s)_{s \in (t,T]}$-adapted stochastic process 
  with continuous sample paths
  satisfying that 
  for all $s \in (t,T]$ 
  it holds a.s.\! that
  \begin{equation}
    Z^x_{t,s} = 
    \begin{pmatrix}
      1\\
      \frac{1}{s-t} \int_t^s
  (\sigma(X^x_{t,r}))^{-1} \; \Big(\frac{\partial}{\partial x} X^x_{t,r}\Big) \,\d W_r
    \end{pmatrix},     
  \end{equation}   
   let $f \in C([0,T) \times \R^d \times \R^{d+1}, \R)$,
  $g \in C(\R^d, \R)$ satisfy for all
  $t \in [0,T)$, $x \in \R^d$, $v, w \in \R^{d+1}$
  that
  $\lvert f(t,x,v) - f(t,x,w) \rvert 
  \leq c\lvert\lvert\lvert  v-w\rvert\rvert \rvert $
  and 
  $\max\{\abs{g(x)},\abs{f(t,x,0)}\}
  \leq c(\norm{x}^c+1)$.
  Then there exists a unique 
  $v \in C([0,T) \times \R^d, \R^{d+1})$
  such that
  $(v(t,x)\sqrt{T-t})_{t\in [0,T), x\in\R^d}$
  grows at most polynomially and
  for all
  $t \in [0,T)$, $x \in \R^d$
  it holds that
  \begin{equation}
  \label{eq:main_sfpe}
    v(t,x) = \E \left[ g(X^x_{t,T})Z^x_{t,T} + \int_t^T f(r, X^x_{t,r}, v(r, X^x_{t,r}))Z^x_{t,r} \,\d r \right].
  \end{equation}
  \end{theorem}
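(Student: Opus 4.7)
The plan is to apply Banach's fixed point theorem to the operator $\Phi$ on the right-hand side of \eqref{eq:main_sfpe}, acting on a suitable Banach space of continuous functions with weighted polynomial growth. Specifically, I would fix a polynomial degree $p \geq c$ large enough (to absorb the polynomial growth of $g$, $f(\cdot,\cdot,0)$ and of the moments of $X^x_{t,s}$) and consider the space $\mathcal{V}$ of continuous functions $v\colon[0,T)\times\R^d\to\R^{d+1}$ with
\[
  \normmm{v}_\lambda
  \;=\; \sup_{t\in[0,T),\,x\in\R^d}
  \frac{\sqrt{T-t}\,\Norm{v(t,x)}\,e^{-\lambda(T-t)}}{1+\Norm{x}^{p}}
  \;<\;\infty,
\]
for a parameter $\lambda>0$ to be chosen large. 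Completeness of $(\mathcal{V},\normmm{\cdot}_\lambda)$ is standard, and uniqueness in the stated class follows from equivalence of the weighted norms for different $\lambda$.

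The first block of work consists of the standard a~priori estimates. Under the one-sided Lipschitz condition on $\mu$, the uniform ellipticity of $\sigma\sigma^*$, and Lipschitz continuity of $\sigma$, a Lyapunov/Grönwall argument yields $\Exp{\Norm{X^x_{t,s}}^{q}}\leq C_q(1+\Norm{x}^q)$ for all $q\geq 1$; differentiating the SDE gives $\Exp{\norm{\tfrac{\partial}{\partial x}X^x_{t,s}}_F^{q}}\leq C_q$; and combining these with It\^o's isometry and the Burkholder--Davis--Gundy inequality applied to the stochastic integral inside $Z^x_{t,s}$ yields the crucial singularity estimate
\[
  \EXPP{\Norm{Z^x_{t,s}}^{q}}^{1/q}
  \;\leq\; \frac{C_q}{\sqrt{s-t}},
  \qquad s\in(t,T].
\]
Putting these estimates into the definition of $\Phi$ and using the Lipschitz assumption on $f$, Cauchy--Schwarz gives
\[
  \Norm{\Phi(v)(t,x)}
  \;\leq\; \frac{C(1+\Norm{x}^{p})}{\sqrt{T-t}}
  + C\int_t^T \frac{1+\Exp{\Norm{X^x_{t,r}}^{p}} + \Exp{\Norm{v(r,X^x_{t,r})}\cdot\Norm{Z^x_{t,r}}}}{\sqrt{r-t}}\,\d r,
\]
so that $\sqrt{T-t}\,\Phi(v)(t,x)$ is polynomially bounded, i.e.\ $\Phi$ maps $\mathcal{V}$ into itself. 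Joint continuity of $\Phi(v)$ in $(t,x)$ follows from the assumed $C^{0,1}$-regularity of $(s,x)\mapsto X^x_{t,s}$ (combined with continuity in $t$ through the SDE), continuity of $Z^x_{t,s}$ in its arguments, and dominated convergence using the singular but integrable bound above.

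The main obstacle is the contraction step: for $v,w\in\mathcal{V}$ the natural bound
\[
  \Exp{\Norm{\Phi(v)(t,x)-\Phi(w)(t,x)}}
  \;\leq\; c\int_t^T \Exp{\Norm{v(r,X^x_{t,r})-w(r,X^x_{t,r})}\cdot\Norm{Z^x_{t,r}}}\,\d r
\]
leads, via Cauchy--Schwarz and the bound on $Z$, to a kernel of the form $(r-t)^{-1/2}(T-r)^{-1/2}$ whose integral over $[t,T]$ equals $\pi$ and is \emph{independent of $T-t$}, so no naive contraction is obtained. I would circumvent this in the standard way by inserting the exponential factor $e^{-\lambda(T-r)}$ from the definition of $\normmm{\cdot}_\lambda$: the resulting integral $\int_t^T e^{-\lambda(T-r)}(r-t)^{-1/2}(T-r)^{-1/2}\,\d r$ can be made arbitrarily small by choosing $\lambda$ sufficiently large (substitute $u=T-r$ and split near $u=0$ versus $u$ bounded away from $0$), yielding a contraction constant strictly less than $1$. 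Banach's fixed point theorem then supplies a unique fixed point $v\in\mathcal{V}$, which is the claimed solution; uniqueness in the larger class of all $v$ with $(\sqrt{T-t}\,v(t,x))$ polynomially bounded follows because every such $v$ lies in $\mathcal{V}$ for some $\lambda\geq 0$.
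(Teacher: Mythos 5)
Your overall plan---Banach fixed point on a weighted Banach space with a polynomial weight in $x$, a $\sqrt{T-t}$ factor, and a norm parameter $\lambda$ tuning an exponential time weight---is exactly the route the paper takes: Theorem~\ref{thm:main_thm} is proved by applying Theorem~\ref{thm:ex_cont_sol_Z} with $V(t,x)=1+\norm{x}^{c+1}$, and the abstract fixed point argument is Theorem~\ref{thm:ex_cont_solution}; the paper writes the exponential weight as $e^{\lambda t}$, which agrees with your $e^{-\lambda(T-t)}$ up to the harmless constant $e^{-\lambda T}$.

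The step that does not close as you have written it is the contraction estimate. The displayed integral $\int_t^T e^{-\lambda(T-r)}(r-t)^{-1/2}(T-r)^{-1/2}\,\d r$ cannot be made arbitrarily small uniformly in $t\in[0,T)$ by choosing $\lambda$ large: substituting $r=t+(T-t)v$ yields $\int_0^1 e^{-\lambda(T-t)(1-v)}v^{-1/2}(1-v)^{-1/2}\,\d v$, which tends to $\int_0^1 v^{-1/2}(1-v)^{-1/2}\,\d v=\pi$ as $t\uparrow T$ for every fixed $\lambda$, so the supremum of this integral over $t\in[0,T)$ equals $\pi$ regardless of $\lambda$. What actually becomes small is the combination $\sqrt{T-t}\int_t^T e^{-\lambda(T-r)}(r-t)^{-1/2}(T-r)^{-1/2}\,\d r$, which is bounded by $\sqrt{\pi^3/(4\lambda)}$ uniformly in $t$; the $\sqrt{T-t}$ is precisely the factor your norm supplies at time $t$, and this uniform bound is the content of item~(ii) of Lemma~\ref{lem:integral}. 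Your sketch of a split near $u=0$ versus away from $0$ can be made to give such a $C\lambda^{-1/2}$ bound, but only once the $\sqrt{T-t}$ prefactor is brought into the estimate; without it, for $t$ near $T$ all the mass of the integral lies where $e^{-\lambda(T-r)}\approx1$ and the split yields nothing. Aside from this, your a~priori moment bounds and the continuity-via-dominated-convergence step correspond in substance to Lemmas~\ref{lem:Z_L_2_bd}, \ref{lem:X_cont_in_prob}, \ref{lem:Z_continuity}, \ref{lem:continuity1}, and~\ref{lem:continuity2}, though the paper devotes considerable additional care to the continuity step, first treating compactly supported $g$ and $h$ and then passing to the limit uniformly in the weighted norm.
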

  

Theorem~\ref{thm:main_thm}
is an immediate consequence of our 
main result,
Theorem~\ref{thm:ex_cont_sol_Z}\footnote{applied with $O\curvearrowleft \R^d$,
$V\curvearrowleft ([0,T]\times\R^d\ni(t,x)\mapsto 1+\norm{x}^{c+1}\in (0,\infty))$},
in Section~$\ref{subse:Z_exist_uniq}$
below.  
Let us comment on some of the
mathematical objects appearing
in Theorem~\ref{thm:main_thm}.
The positive, real number $c\in (0,\infty)$
in Theorem~\ref{thm:main_thm}
is used to formulate an
ellipticity condition, 
a growth condition, and 
Lipschitz type conditions
on the functions
$\mu$, $\sigma$, and $f$.
The positive, real number $T\in(0,\infty)$ in
Theorem~\ref{thm:main_thm}
determines the time horizon of
the SDE in \eqref{eq:SDE}.
The functions $\mu\in C^2(\R^d,\R)$
and $\sigma\in C^2(\R^d,\R^{d\times d})$
in
Theorem~\ref{thm:main_thm}
specify the drift and the
diffusion coefficient
of the SDE in
\eqref{eq:SDE}, respectively.
The stochastic process $X^x_t\colon
[t,T]\times \Omega \to \R^d$ 
in
Theorem~\ref{thm:main_thm}
determines 
the solution of the SDE in 
\eqref{eq:SDE}.
The second to $(d+1)$-th component of the
stochastic process $Z^x_t\colon
(t,T]\times\Omega\to \R^{d+1}$
in
Theorem~\ref{thm:main_thm} specify
the factor arising from the 
Bismut-Elworthy-Li formula.
The function $f\in C([0,T)\times\R^d
\times\R^{d+1},\R)$ in
Theorem~\ref{thm:main_thm} 
can be interpreted as
the nonlinearity of a 
semilinear 
PDE associated with
the SDE in \eqref{eq:SDE}
while the function $g\in C(\R^d,\R)$
specifies the terminal condition 
of this semilinear PDE.


The remainder of this article is
organized as follows.
In Section~\ref{sec:general_sfpes}
we study
SFPEs in an abstract setting
for general stochastic processes
$X$ and 
$Z$ in \eqref{eq:main_sfpe}.
The main result of 
Section~\ref{subsec:exist_uniq}
is Theorem~\ref{thm:ex_cont_solution}
which proves the existence and uniqueness
of solutions to SFPEs in an abstract setting by using Banach's fixed point theorem.
The well-definedness of the mapping
to which Banach's fixed point theorem
is applied in the proof of 
Theorem~\ref{thm:ex_cont_solution}
is demonstrated in
Sections~\ref{subsec:integrability}
and \ref{subsec:continuity}.
In Section~\ref{subsec:contractivity}
we establish the contractivity property
needed for using
Banach's fixed point theorem in the 
proof of Theorem~\ref{thm:ex_cont_solution}.
In Section~\ref{sec:sfpes_Z} we
investigate SFPEs of the form \eqref{eq:main_sfpe} 
where $X$ is an SDE solution and
$Z$ is a specific
stochastic process
arising from the Bismut-Elworthy-Li
formula. 
In Theorem~\ref{thm:ex_cont_sol_Z},
the main result of this article,
we apply the abstract result 
Theorem~\ref{thm:ex_cont_solution} 
to these
stochastic processes. 
In Sections~\ref{subsec:Z_bounds}
and \ref{subsec:Z_convergence} we
establish several auxiliary results
to ensure the assumptions of 
Theorem~\ref{thm:ex_cont_solution}.

\section{Abstract stochastic fixed point 
equations (SFPEs)}
\label{sec:general_sfpes}

In this section we study existence,
uniqueness, 
and general properties of abstract SFPEs.
The main result of this section
is
Theorem~\ref{thm:ex_cont_solution}
in Section~\ref{subsec:exist_uniq}
below. 
Theorem~\ref{thm:ex_cont_solution}
is an application of Banach's fixed point theorem to the function
in \eqref{eq:ex_cont_sol2}.
Lemma~\ref{lem:continuity2} in Section~\ref{subsec:regularity}
below ensures well-definedness of this function.
Lemma~\ref{lem:contraction} in Section~\ref{subsec:contractivity} below establishes
the contractivity property needed to
apply Banach's fixed point theorem.
Lemma~\ref{lem:integral} 
makes it possible
to overcome the challenges of the 
singularity of the stochastic process
arising from the Bismut-Elworthy-Li formula
in Lemma~\ref{lem:continuity1},
Lemma~\ref{lem:continuity2},
and Lemma~\ref{lem:contraction}.
Throughout this section we 
frequently use the following setting.

\begin{setting}\label{setting:sfpe}
  Let $d, m \in \N$,
  $c, T \in (0, \infty)$,
  let $\lvert \lvert \cdot \rvert \rvert\colon \R^d \to [0,\infty)$,
  $\lvert \lvert \lvert \cdot \rvert \rvert \rvert\colon \R^m \to [0,\infty)$
  be norms,
  let $O \subseteq \R^d$
  be a non-empty open set,
  for every $r \in (0, \infty)$
  let $K_r\subseteq [0,T)$,
  $O_r \subseteq O$
  satisfy
  $K_r=[0,\max\{T-\frac{1}{r},0\}]$ and
  $O_r = \{  x \in O\colon \lvert \lvert x \rvert \rvert \leq r \text{ and } \{ y \in \R^d\colon \lvert \lvert y-x \rvert \rvert 
  < \frac{1}{r} \} \subseteq O \}$,
  let $(\Omega, \mathcal{F}, \mathbb{P})$
  be a probability space,
  for every $t \in [0,T]$, $x \in O$
  let $X^x_{t} = (X^x_{t,s})_{s \in [t,T]} \colon [t,T] \times \Omega \to O$ 
  and $Z^x_{t} = (Z^x_{t,s})_{s \in (t,T]} \colon (t,T] \times \Omega \to \R^m$ 
  be measurable,
  and let $V \in C([0,T]\times O,(0, \infty))$
  satisfy for all
  $t \in [0,T)$, $s \in (t, T]$, 
  $x \in O$
  that
  $\E [ V(s, X^x_{t,s}) \lvert \lvert \lvert 
  Z^x_{t,s} \rvert \rvert \rvert] 
  \leq \frac{c}{\sqrt{s-t}} V(t,x)$.
\end{setting}

\subsection{Integrability of certain stochastic processes}
\label{subsec:integrability}

The following lemma shows the
calculation of two specific
integrals which will be used
to prove upper bounds and ensure
the well-definedness of the SFPE.

\begin{lemma}\label{lem:integral}
  Let $a\in\R$, $b\in(a,\infty)$.
  Then 
  \begin{enumerate}[label=(\roman*)]
    \item\label{it:int1}
    for all $\lambda\in [0,\infty)$ 
    it holds that
    \begin{equation}
      \int_a^b \frac{1}{\sqrt{(b-x)(x-a)}}
      e^{-\lambda x} \,\d x
      = e^{-\frac{\lambda (b+a)}{2}}
      \int_0^\pi e^{\frac{\lambda (b-a)}{2}\cos(\theta)}\,\d \theta
    \end{equation}
    and
    \item\label{it:int2}
    for all $\lambda\in (0,\infty)$
    it holds that
    \begin{equation}
    \int_a^{b} \frac{1}{\sqrt{(b-x)(x-a)}}\, 
    e^{-\lambda x} \,\d x
    \leq \sqrt{\frac{\pi^3}{4\lambda(b-a)}}
    \,e^{-\lambda a}.
  \end{equation}
  \end{enumerate}
\end{lemma}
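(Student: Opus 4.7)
The proof naturally splits along the two parts of the lemma, with part \ref{it:int1} a direct change of variables and part \ref{it:int2} a Gaussian tail estimate built on top of it.

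For part \ref{it:int1}, the plan is to apply the trigonometric substitution $x = \tfrac{a+b}{2} + \tfrac{b-a}{2}\cos\theta$ for $\theta \in [0,\pi]$, which maps $[0,\pi]$ bijectively onto $[a,b]$ (with reversed orientation). Under this substitution a direct computation yields $b-x = \tfrac{b-a}{2}(1-\cos\theta)$, $x-a = \tfrac{b-a}{2}(1+\cos\theta)$, so that $\sqrt{(b-x)(x-a)} = \tfrac{b-a}{2}\sin\theta$, while $\d x = -\tfrac{b-a}{2}\sin\theta\,\d\theta$. The $\sin\theta$ factor cancels with the singular denominator, the sign flip absorbs the reversal of the limits, and one is left with
\begin{equation*}
  \int_a^b \frac{e^{-\lambda x}}{\sqrt{(b-x)(x-a)}}\,\d x
  = e^{-\lambda(a+b)/2}\int_0^\pi e^{-\lambda(b-a)\cos\theta/2}\,\d\theta.
\end{equation*}
Finally the symmetry $\theta \mapsto \pi - \theta$ sends $\cos\theta \mapsto -\cos\theta$ and preserves the measure on $[0,\pi]$, so the sign in the exponent can be flipped to match the statement.

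For part \ref{it:int2}, the plan is to start from the identity in \ref{it:int1}, factor the prefactor as $e^{-\lambda(a+b)/2} = e^{-\lambda a} e^{-\lambda(b-a)/2}$, and move the second factor inside the integral to rewrite the integrand as $e^{-\lambda a}\exp\!\bigl(-\tfrac{\lambda(b-a)}{2}(1-\cos\theta)\bigr)$. The key elementary inequality is the Jordan-type bound $\sin\phi \geq \tfrac{2\phi}{\pi}$ for $\phi \in [0,\pi/2]$, which together with $1-\cos\theta = 2\sin^2(\theta/2)$ gives $1-\cos\theta \geq \tfrac{2\theta^2}{\pi^2}$ on $[0,\pi]$. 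This upgrades the trigonometric bump to a Gaussian:
\begin{equation*}
  \int_0^\pi e^{-\lambda(b-a)(1-\cos\theta)/2}\,\d\theta
  \leq \int_0^\infty e^{-\lambda(b-a)\theta^2/\pi^2}\,\d\theta
  = \frac{\pi^{3/2}}{2\sqrt{\lambda(b-a)}},
\end{equation*}
where the last equality is the standard Gaussian integral. Combining this with the factor $e^{-\lambda a}$ pulled out above yields exactly the claimed bound $\sqrt{\pi^3/(4\lambda(b-a))}\,e^{-\lambda a}$.

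I do not expect any real obstacle here; the only mild care needed is to track signs and orientation in the substitution of part \ref{it:int1} and to choose the Jordan inequality at precisely the right place in part \ref{it:int2} to produce a Gaussian whose explicit integral matches the stated constant $\pi^3/4$. All the work is routine calculus once the trigonometric substitution is in place.
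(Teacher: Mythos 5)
Your proposal is correct and follows essentially the same path as the paper: part~\ref{it:int1} by the trigonometric substitution $\sqrt{(b-x)(x-a)} = \tfrac{b-a}{2}\sin\theta$ (the paper parametrizes on $[0,b-a]$ and translates, whereas you parametrize directly on $[a,b]$ and reflect, a purely cosmetic difference), and part~\ref{it:int2} by combining the half-angle identity $1-\cos\theta = 2\sin^2(\theta/2)$, the Jordan bound $\sin\phi \geq 2\phi/\pi$, and the Gaussian integral $\int_0^\infty e^{-z^2}\,\d z = \sqrt{\pi}/2$ (the paper reaches the same Gaussian via the change of variables $\theta = 2y$). The constants check out exactly, so there is nothing to fix.
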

  
\begin{proof}[Proof of Lemma~\ref{lem:integral}] 
  First note that
  the substitution 
  $(0,\pi)\ni\theta\mapsto 
  x(\theta)=\frac{c}{2}(1-\cos(\theta))
  \in(0,c)$
  and the fact that for all
  $x\in\R$ it holds that
  $(\sin(x))^2+(\cos(x))^2=1$
  ensure that for all
  $\lambda \in[0,\infty)$,
  $c\in (0,\infty)$
  it holds that
  \begin{equation}
  \begin{split}
    &e^{\frac{\lambda c}{2}} \int_0^{c} \frac{1}{\sqrt{x(c-x)}}\, 
    e^{-\lambda x} \,\d x
    = \int_0^{c} \frac{1}{\sqrt{x(c-x)}}\, 
    e^{\lambda (\frac{c}{2}-x)} \,\d x\\
    &=\int_0^\pi \frac{1}{\sqrt{\frac{c}{2}(1-\cos(\theta))(c-\frac{c}{2}(1-\cos(\theta)))}}
    e^{\frac{\lambda c}{2}\cos(\theta)}
    \frac{c}{2}\sin(\theta)\,\d \theta\\
    &=\int_0^\pi \frac{1}{\sqrt{(1-\cos(\theta))(1+\cos(\theta))}}
    e^{\frac{\lambda c}{2}\cos(\theta)}
    \sin(\theta)\,\d \theta\\
    &=\int_0^\pi \frac{1}{\sqrt{1-(\cos(\theta))^2}}
    e^{\frac{\lambda c}{2}\cos(\theta)}
    \sin(\theta)\,\d \theta
     =\int_0^\pi
    e^{\frac{\lambda c}{2}\cos(\theta)}
    \,\d \theta.
  \end{split}
  \end{equation} 
  Hence, we obtain 
  for all $\lambda\in [0,\infty)$ 
  that
  \begin{equation}
  \begin{split}
    &\int_a^b \frac{1}{\sqrt{(b-x)(x-a)}}
      \,e^{-\lambda x} \,\d x
    = e^{-\lambda a}\int_0^{b-a} \frac{1}{\sqrt{(b-a-y)y}}
      \,e^{-\lambda y} \,\d y\\
    &= e^{-\frac{\lambda(b-a)}{2}-\lambda a} \int_0^\pi
    e^{\frac{\lambda (b-a)}{2}\cos(\theta)}
    \,\d \theta
    = e^{-\frac{\lambda(b+a)}{2}} \int_0^\pi
    e^{\frac{\lambda (b-a)}{2}\cos(\theta)}
    \,\d \theta.
  \end{split}
  \end{equation}
  This establishes item~\ref{it:int1}.
  Next,
  the fact that
  $\int_0^\infty e^{-z^2}\d z
  =\frac{\sqrt{\pi}}{2}$    
  and the fact that for all 
  $y\in[0,\frac{\pi}{2}]$ 
  it holds that
  $\sin(y)\geq \frac{2y}{\pi}$    
  yield for all 
  $c\in(0,\infty)$
  that
  \begin{equation}\label{eq:Bessel_bound}
  \begin{split}
    &\int_0^{\pi}
    e^{c\cos(\theta)}\,\d\theta
    =2\int_0^{\frac{\pi}{2}}
    e^{c\cos(2y)}\,\d y
    =2\int_0^{\frac{\pi}{2}}
    e^{c(1-2(\sin(y))^2)}\,\d y\\
    &=2 e^c\int_0^{\frac{\pi}{2}}
    e^{-2c(\sin(y))^2}\,\d y
    \leq 2 e^c\int_0^{\frac{\pi}{2}}
    e^{-\frac{8cy^2}{\pi^2}}\,\d y
    \leq \frac{2\pi}{\sqrt{8c}}e^c\int_0^{\infty}
    e^{-z^2}\,\d z
    = \sqrt{\frac{\pi^3}{8c}}e^c.
  \end{split}
  \end{equation}
  Item~\ref{it:int1} therefore
  shows that for all
  $\lambda\in (0,\infty)$
  it holds that
  \begin{equation}
  \begin{split}
    &\int_a^{b} \frac{1}{\sqrt{(b-x)(x-a)}}\, 
    e^{-\lambda x} \,\d x
    =e^{-\frac{\lambda(b+a)}{2}} \int_0^\pi
    e^{\frac{\lambda (b-a)}{2}\cos(\theta)}
    \,\d \theta\\
    &\leq e^{-\frac{\lambda (b+a)}{2}}
    \sqrt{\frac{2\pi^3}{8\lambda(b-a)}}
    e^{\frac{\lambda(b-a)}{2}}
    =\sqrt{\frac{\pi^3}{4\lambda(b-a)}}
    \, e^{-\lambda a}.
  \end{split}
  \end{equation}
  The proof of Lemma~\ref{lem:integral}
  is thus complete.  
\end{proof}

The next lemma uses 
Lemma~\ref{lem:integral}
to establish
integrability properties for a
certain class of stochastic processes.
Lemma~\ref{lem:integrability}
is a generalization of the results in \cite[Lemma 2.1]{BGHJ2019}.

\begin{lemma}\label{lem:integrability}
Let $d, m \in \N$,
$c, T \in (0, \infty)$, 
let $\lvert \lvert \lvert \cdot \rvert \rvert \rvert\colon \mathbb{R}^m \to [0, \infty)$ be a norm,
let $O \subseteq \R^d$
be a non-empty open set,
let $(\Omega, \mathcal{F}, \mathbb{P})$
be a probability space,
for every $t \in [0,T]$, $x \in O$ 
let $X^x_t = (X^x_{t,s})_{s \in [t,T]} \colon [t,T] \times \Omega \to O$
and $Z^x_{t} = (Z^x_{t,s})_{s \in (t,T]} \colon (t,T] \times \Omega \to \R^m$
be measurable,
let $g\colon O \to \R$, 
$h\colon [0,T] \times O \to \R$, and
$V\colon[0,T] \times O \to (0, \infty)$ 
be measurable,
and assume for all
$t \in [0,T)$, $s \in (t,T]$, $x \in O$
that 
$\E [ V(s, X^x_{t,s}) \lvert \lvert \lvert Z^x_{t,s} \rvert \rvert \rvert ]
\leq \frac{c}{\sqrt{s-t}} V(t,x)$
and
$\sup _{t \in [0,T)} \sup _{x \in O} \left[ \frac{\lvert g(x) \rvert}{V(T,x)} + \frac{\lvert h(t,x) \rvert}{V(t,x)}\sqrt{T-t} \right] < \infty$.
Then it holds for all
$t \in [0,T)$, $x \in O$ that
\begin{equation}\label{eq:integrability}
\E \left[ \lvert g(X^x_{t,T}) \rvert \; \lvert \lvert \lvert Z^x_{t,T} \rvert \rvert \rvert
+ \int_t^T \lvert h(r, X^x_{t,r}) \rvert \; \lvert \lvert \lvert Z^x_{t,r} \rvert \rvert \rvert \,\d r \right]
    <\infty.
\end{equation} 
\end{lemma}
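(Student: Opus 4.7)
The plan is to separate the two terms in \eqref{eq:integrability}, use the growth hypothesis on $g$ and $h$ to pass from $|g|$ and $|h|$ to the control function $V$, and then invoke the moment bound on $V(s,X^x_{t,s})|||Z^x_{t,s}|||$. Concretely, I would set
\begin{equation}
  C := \sup_{t\in[0,T)}\sup_{x\in O}\Big[\tfrac{|g(x)|}{V(T,x)} + \tfrac{|h(t,x)|}{V(t,x)}\sqrt{T-t}\Big] < \infty
\end{equation}
so that for all $t\in[0,T)$, $r\in[t,T)$, $y\in O$ one has $|g(y)|\leq C\,V(T,y)$ and $|h(r,y)|\leq C\,V(r,y)/\sqrt{T-r}$.

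For the terminal term, applied at $s=T$, the hypothesis on $V$ and $Z$ directly yields
\begin{equation}
  \E\big[|g(X^x_{t,T})|\;|||Z^x_{t,T}|||\big]
  \leq C\,\E\big[V(T,X^x_{t,T})\,|||Z^x_{t,T}|||\big]
  \leq \tfrac{Cc}{\sqrt{T-t}}\,V(t,x),
\end{equation}
which is finite. For the time-integral term, I would first use Tonelli's theorem (the integrand is nonnegative and measurable) to swap $\E$ and the $dr$-integration, then bound pointwise by $C/\sqrt{T-r}$ times $\E[V(r,X^x_{t,r})|||Z^x_{t,r}|||]\leq cV(t,x)/\sqrt{r-t}$, giving
\begin{equation}
  \E\Big[\int_t^T |h(r,X^x_{t,r})|\,|||Z^x_{t,r}|||\,\d r\Big]
  \leq Cc\,V(t,x)\int_t^T \frac{1}{\sqrt{(T-r)(r-t)}}\,\d r.
\end{equation}

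The only potentially delicate step is the singular $1/\sqrt{(T-r)(r-t)}$ factor at both endpoints, but Lemma~\ref{lem:integral}\ref{it:int1} applied with $\lambda=0$, $a=t$, $b=T$ gives exactly $\int_t^T \frac{\d r}{\sqrt{(T-r)(r-t)}}=\int_0^\pi\d\theta=\pi$, a finite constant. Summing the two bounds then yields \eqref{eq:integrability}. There is no real obstacle here; the lemma is essentially a bookkeeping exercise that packages the $V$-weighted growth assumption together with the Beta-type integral identity from Lemma~\ref{lem:integral}, and the main point is to verify that the two-sided singularity of the Bismut–Elworthy–Li factor (near $r=t$) and of the terminal-time weight (near $r=T$) are integrable together.
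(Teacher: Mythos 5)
Your proposal is correct and follows essentially the same route as the paper: extract the finite constant from the growth hypothesis, bound $|g|$ and $|h|$ by $V$, apply Fubini/Tonelli together with the moment assumption on $V\cdot|||Z|||$, and close with Lemma~\ref{lem:integral}\ref{it:int1} at $\lambda=0$ to evaluate $\int_t^T \frac{\d r}{\sqrt{(T-r)(r-t)}}=\pi$. The only cosmetic difference is that you invoke Tonelli while the paper writes Fubini; since the integrand is nonnegative, these are interchangeable here.
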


\begin{proof}[Proof of Lemma \ref{lem:integrability}]
  Throughout this proof let 
  $\alpha \in [0, \infty)$
  satisfy for all
  $t \in [0,T)$, $x \in O$
  that
  \begin{equation}
  \label{eq:V_constant}
     \lvert g(x) \rvert \leq \alpha V(T,x)
     \qquad
     \text{ and }
     \qquad
     \lvert h(t,x)\rvert\sqrt{T-t}  
     \leq \alpha V(t,x).
  \end{equation}
  Observe that the assumption that 
  $g\colon O \to \R$ and
  $h\colon [0,T] \times O \to \R$ 
  are measurable
  and the fact that 
  for all $t \in [0,T]$, $x \in O$
  it holds that 
  $X^x_t\colon [t,T] \times \Omega \to O$ 
  and
  $Z^x_t \colon (t,T] \times \Omega \to \R^m$ 
  are measurable
  imply that
  for all $t \in [0,T)$, $x \in O$
  it holds that
  $\Omega \ni \omega \mapsto g(X^x_{t,T}(\omega))Z^x_{t,T}(\omega)\in\R^m$ 
  and
  $(t, T] \times \Omega \ni (s, \omega) \mapsto h(s, X^x_{t,s}(\omega))Z^x_{t,s}(\omega)\in\R^m$
  are measurable.
  Furthermore, note that
  Fubini's theorem,
  \eqref{eq:V_constant}, and 
  the hypothesis that for all
  $t \in [0,T)$, $s \in (t,T]$, $x \in O$
  it holds that 
  $\E [ V(s, X^x_{t,s}) \lvert \lvert \lvert 
  Z^x_{t,s} \rvert \rvert \rvert ]
   \leq \frac{c}{\sqrt{s-t}} V(t,x)$
  demonstrate that 
  for all
  $t \in [0,T)$, $x \in O$
  it holds that
  \begin{equation}
    \begin{split}
    &\E \left[ \lvert g(X^x_{t,T}) \rvert \; \lvert \lvert \lvert Z^x_{t,T} \rvert \rvert \rvert
    + \int_t^T \lvert h(r, X^x_{t,r}) \rvert \; \lvert \lvert \lvert Z^x_{t,r} \rvert \rvert \rvert \,\d r \right] \\
    &= \E \left[ \lvert g(X^x_{t,T}) \rvert \; \lvert \lvert \lvert Z^x_{t,T} \rvert \rvert \rvert \right]
    + \int_t^T \E \left[ \lvert h(r, X^x_{t,r}) \rvert \; \lvert \lvert \lvert Z^x_{t,r} \rvert \rvert \rvert \right] \,\d r  \\
    &\leq \E \left[ \alpha V(T, X^x_{t,T}) \lvert\lvert \lvert Z^x_{t,T} \rvert \rvert \rvert \right]
    +  \int_t^T\frac{1}{\sqrt{T-r}}
    \E \left[ \alpha V(r, X^x_{t,r}) \lvert \lvert \lvert Z^x_{t,r} \rvert \rvert \rvert \right] \,\d r \\
    &\leq  \frac{\alpha c}{\sqrt{T-t}} V(t,x) 
    +\int_t^T  \frac{\alpha c}{\sqrt{(T-r)(r-t)}} V(t,x)\,\d r.
  \end{split}
  \end{equation}
  Item~\ref{it:int1} of 
  Lemma~\ref{lem:integral}
  (applied for every $t\in [0,T)$
  with $a\curvearrowleft t$, 
  $b\curvearrowleft T$,
  $\lambda\curvearrowleft 0$
  in the notation of
  Lemma~\ref{lem:integral})
  therefore shows that
  for all $t\in [0,T)$,
  $x\in O$
  it holds that
  \begin{equation}
  \begin{split}
    \E \left[ \lvert g(X^x_{t,T}) \rvert \; \lvert \lvert \lvert Z^x_{t,T} \rvert \rvert \rvert
    + \int_t^T \lvert h(r, X^x_{t,r}) \rvert \; \lvert \lvert \lvert Z^x_{t,r} \rvert \rvert \rvert \,\d r \right] 
    \leq \alpha c\bigg( 
    \frac{1}{\sqrt{T-t}}  
    +\pi \bigg)V(t,x)
    <\infty.
  \end{split}
  \end{equation}
  The proof of Lemma~\ref{lem:integrability}
  is thus complete.
\end{proof}

\subsection{Continuity of SFPEs with respect to coefficient functions}
\label{subsec:continuity}

In this section we establish some
general convergence and approximation 
results for SFPE solutions.
The following lemma
generalizes \cite[Lemma 2.2]{BGHJ2019}
and demonstrates
several properties of
approximating functions.

\begin{lemma}\label{lem:approximation_result1}
  Assume Setting~\ref{setting:sfpe},
  let $g_n \in C(O, \R)$, $n \in \N_0$,
  and $h_n \in C([0,T) \times O, \R)$, 
  $n \in \N_0$,
  satisfy for all 
  $n \in \N$ that
  $\inf_{r \in (0, \infty)} [ \sup_{t \in [0,T)\setminus K_r} \sup_{x \in O \setminus O_r} (  \frac{\lvert g_n(x) \rvert}{V(T,x)} 
  + \frac{\lvert h_n(t,x) \rvert}{V(t,x)}
  \sqrt{T-t}) ] 
  = 0$,
  and assume that
  \begin{equation}  \label{eq:approximation_result1_ass}
    \limsup \limits_{n \to \infty} \left[ 
    \sup \limits_{t \in [0,T)} 
    \sup \limits_{x \in O} \left( 
    \frac{\lvert g_n(x) - g_0(x) \rvert}{V(T,x)} 
    + \frac{\lvert h_n(t,x) - h_0(t,x) \rvert}{V(t,x)}\sqrt{T-t} \right) \right] 
    = 0.
\end{equation}
  Then
  \begin{enumerate}[label=(\roman*)]
  \item\label{it:app_res1}
  it holds for every
  $n \in \N_0$ that
  \begin{equation}
     \sup \limits_{t \in [0,T)} 
     \sup \limits_{x \in O}
     \left[ \frac{\lvert g_n(x) \rvert}{V(T,x)}
     + \frac{\lvert h_n(t,x) \rvert}{V(t,x)}
     \sqrt{T-t} \right]
     < \infty,
  \end{equation}
  \item\label{it:app_res2}
  it holds for every
 $n \in \N_0$
  that there exists a unique
  $v_n \colon [0,T) \times O \to \R^m$ 
  which satisfies for all
  $t \in [0,T)$, $x \in O$
  that
  \begin{equation}\label{eq:ar1_2}
     v_n(t,x) = \E\left[g_n(X^x_{t,T})Z^x_{t,T} 
     + \int_t^T h_n(r, X^x_{t,r}) 
     Z^x_{t,r} \,\d r \right],
  \end{equation}
  \item\label{it:app_res3}
  it holds that
  \begin{equation}
    \limsup \limits_{n \to \infty} \left[ 
    \sup \limits_{t \in [0,T)} \sup \limits_{x \in O} \left( \frac{\lvert \lvert \lvert v_n(t,x) 
    - v_0(t,x) \rvert \rvert \rvert}{V(t,x)}  \, \sqrt{T-t} \right) \right] 
    = 0,
  \end{equation}
  and
  \item\label{it:app_res4}
  it holds for every
  compact set 
  $\mathcal{K}\subseteq [0,T)\times O$
  that
  \begin{equation}
    \limsup \limits_{n \to \infty} \left[ 
    \sup \limits_{(t,x) \in \mathcal{K}} \, 
    \left( \lvert \lvert \lvert v_n(t,x) - v_0(t,x) \rvert \rvert \rvert  \, \sqrt{T-t} \right)  \right] 
    = 0.
  \end{equation}
  \end{enumerate}
\end{lemma}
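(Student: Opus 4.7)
The plan is to prove items (i)--(iv) in order, with item (i) doing most of the real work and the remaining three reducing to short corollaries of Lemmas~\ref{lem:integrability} and~\ref{lem:integral}. The main obstacle is item (i): the conclusion demands a \emph{global} bound on the $V$-weighted sum of $|g_n(x)|$ and $|h_n(t,x)|\sqrt{T-t}$, whereas the hypothesis only controls this sum on the ``tail'' region $(t,x)\in([0,T)\setminus K_r)\times(O\setminus O_r)$.

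For item (i) with $n\in\N$, I would decompose $[0,T)\times O$ into the compact ``bulk'' $K_r\times O_r$ and its complement. On $K_r\times O_r$ the continuity of $g_n$, $h_n$, and $V$, together with $V>0$ and $\sqrt{T-t}\leq\sqrt{T}$, give a uniform bound on the weighted sum; on the complementary region the hypothesis supplies the bound once $r$ is chosen sufficiently large. For $n=0$ I would then invoke~\eqref{eq:approximation_result1_ass} to produce $N\in\N$ for which the uniform $V$-weighted difference is at most $1$, and finish by the triangle inequality using item (i) for $n=N$. With item (i) in hand, item (ii) is immediate from Lemma~\ref{lem:integrability} applied with $g\curvearrowleft g_n$ and $h\curvearrowleft h_n$: the finite weighted suprema yield absolute integrability of the integrand in~\eqref{eq:ar1_2}, and therefore~\eqref{eq:ar1_2} defines $v_n(t,x)$ pointwise and uniquely as an $\R^m$-valued expectation.

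For item (iii), subtracting the defining identities for $v_n$ and $v_0$ and letting $\epsilon_n$ denote the bracketed quantity inside the $\limsup$ in~\eqref{eq:approximation_result1_ass}, the pointwise estimates $|g_n-g_0|(y)\leq \epsilon_n V(T,y)$ and $|h_n-h_0|(r,y)\sqrt{T-r}\leq \epsilon_n V(r,y)$, combined with the Setting's hypothesis $\E[V(s,X^x_{t,s})\lvert\lvert\lvert Z^x_{t,s}\rvert\rvert\rvert]\leq cV(t,x)/\sqrt{s-t}$, lead to
\begin{equation*}
\lvert\lvert\lvert v_n(t,x)-v_0(t,x)\rvert\rvert\rvert\leq \epsilon_n\, c\, V(t,x)\Bigl(\tfrac{1}{\sqrt{T-t}}+\int_t^T\tfrac{\d r}{\sqrt{(T-r)(r-t)}}\Bigr).
\end{equation*}
Lemma~\ref{lem:integral}\ref{it:int1} applied with $\lambda\curvearrowleft 0$, $a\curvearrowleft t$, $b\curvearrowleft T$ evaluates the remaining integral to $\pi$, so multiplying through by $\sqrt{T-t}/V(t,x)$ produces the uniform bound $\epsilon_n c(1+\pi\sqrt{T})\to 0$, which is exactly item (iii). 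Item (iv) then follows at once from item (iii) by using the continuity of $V$ to bound it from above on the compact set $\cK$.
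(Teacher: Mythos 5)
Your proposal is correct and follows essentially the same path as the paper's own proof: compactness of $K_r\times O_r$ plus the tail hypothesis for item~\ref{it:app_res1}, Lemma~\ref{lem:integrability} for item~\ref{it:app_res2}, the $V$-weighted $Z$-bound from Setting~\ref{setting:sfpe} together with Lemma~\ref{lem:integral}~\ref{it:int1} (with $\lambda=0$, giving the value $\pi$) for item~\ref{it:app_res3}, and continuity of $V$ on compacta for item~\ref{it:app_res4}. One caution you already flagged yourself: the hypothesis only controls the ``product-of-complements'' set $([0,T)\setminus K_r)\times(O\setminus O_r)$, while your step for item~\ref{it:app_res1} tacitly uses a bound on the complement of the product $K_r\times O_r$ (which is strictly larger); the paper makes this same silent identification, so your argument is faithful to it, but if you want a watertight write-up you should either treat the hypothesis as shorthand for $\sup_{(t,x)\notin K_r\times O_r}$ or handle the two mixed regions $K_r\times(O\setminus O_r)$ and $([0,T)\setminus K_r)\times O_r$ explicitly.
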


\begin{proof}[Proof of Lemma~\ref{lem:approximation_result1}]
  First note that for all
  $r \in (0, \infty)$ it holds that
  $K_r$ and $O_r$ are compact sets.
  Combining this with the fact that
  $O \ni x \mapsto \frac{g_n(x)}{V(T,x)}\in\R$
  and
  $[0,T) \times O \ni (t,x) 
  \mapsto \frac{h_n(t,x)}{V(t,x)}\sqrt{T-t} 
  \in \R$ 
  are continuous 
  ensures that for all
  $n \in \N_0$, $r \in (0, \infty)$
  it holds that 
  \begin{equation}
    \sup_{t\in K_r}\sup_{x\in O_r} \left[ 
    \frac{\lvert g_n(x) \rvert}{V(T,x)} +
    \frac{\lvert h_n (t,x) \rvert}{V(t,x)}
    \sqrt{T-t}\right]
    < \infty.
  \end{equation}
  The hypothesis that for all
  $n \in \N$ it holds that
  $\inf_{r \in (0, \infty)} [ 
  \sup_{t\in [0,T)\setminus K_r}
  \sup_{x \in O \setminus O_r}
   ( \frac{\lvert g_n(x) \rvert}{V(T,x)} 
   + \frac{\lvert h_n(t,x) \rvert}{V(t,x)}
   \sqrt{T-t})] 
   = 0$
  hence implies that 
  for all $n \in \N$ it holds that
  \begin{equation}
  \sup \limits_{t \in [0,T)} \sup \limits_{x \in O} \left[ \frac{\lvert g_n(x) \rvert}{V(T,x)} 
  + \frac{\lvert h_n(t,x) \rvert}{V(t,x)}
  \sqrt{T-t} \right] 
  < \infty.
  \end{equation}
  This and \eqref{eq:approximation_result1_ass}
  establish item~\ref{it:app_res1}.
  Moreover, note that combining 
  item~\ref{it:app_res1}
  with Lemma \ref{lem:integrability}
  proves item~\ref{it:app_res2}.
  Next observe that the hypothesis that
  for all
  $t \in [0,T)$, $s \in (t,T]$, $x \in O$
  it holds that 
  $\E [ V(s, X^x_{t,s}) \lvert \lvert \lvert Z^x_{t,s} \rvert \rvert \rvert ]
  \leq \frac{c}{\sqrt{s-t}} V(t,x)$
  implies that
  for all
  $n \in \N$, $t \in [0,T)$, $x \in O$
  it holds that
  \begin{equation}
  \begin{split}
&\frac{\E \left[ \lvert \lvert \lvert g_n(X^x_{t,T})Z^x_{t,T} - g_0(X^x_{t,T})Z^x_{t,T} \rvert \rvert \rvert \right]}{V(t,x)} 
= \E \left[ \frac{\lvert g_n(X^x_{t,T}) - g_0(X^x_{t,T}) \rvert }{V(T, X^x_{t,T})} \cdot \frac{V(T, X^x_{t,T})  \lvert \lvert \lvert Z^x_{t,T} \rvert \rvert \rvert}{V(t,x)} \right] \\
&\leq \left[ \sup \limits_{y \in O} \left( \frac{\lvert g_n(y) - g_0(y) \rvert}{V(T,y)} \right) \right] \frac{\E \left[ V(T,X^x_{t,T}) \lvert \lvert \lvert Z^x_{t,T} \rvert \rvert \rvert \right]}{V(t,x)} 
\leq \sup \limits_{y \in O} \left( \frac{\lvert g_n(y) - g_0(y) \rvert}{V(T,y)} \right)\frac{c}{\sqrt{T-t}}.
\end{split}
  \end{equation}
  This and 
  \eqref{eq:approximation_result1_ass}
  show that
  \begin{equation}\label{eq:ar1_inequality_g}
  \begin{split}
    \limsup \limits_{n \to \infty} \left[ 
    \sup \limits_{t \in [0,T)} \sup \limits_{x \in O} \left( \frac{\lvert \lvert \lvert \E \left[ g_n(X^x_{t,T}) Z^x_{t,T} \right] 
    - \E \left[ g_0(X^x_{t,T}) Z^x_{t,T} \right] \rvert \rvert \rvert}{V(t,x)} \, \sqrt{T-t} \right) \right]
    = 0.
  \end{split}
  \end{equation}
  In addition, observe that
  the hypothesis that
  for all
  $t \in [0,T)$, $s \in (t,T]$, $x \in O$
  it holds that 
  $\E[ V(s, X^x_{t,s}) \lvert \lvert \lvert Z^x_{t,s} \rvert \rvert \rvert ] 
  \leq \frac{c}{\sqrt{s-t}} V(t,x)$
  ensures that
  for all
  $n \in \N$, $t \in [0,T)$, $x \in O$
  it holds that
  \begin{equation}
  \begin{split}
    &\frac{\E \left[ \int_t^T \lvert \lvert \lvert h_n(r,X^x_{t,r})Z^x_{t,r} 
    - h_0(r,X^x_{t,r})Z^x_{t,r} \rvert \rvert \rvert \,\d r \right]}{V(t,x)} \\
  &= \int_t^T \E \left[ \frac{\lvert h_n(r, X^x_{r,t}) - h_0(r, X^x_{t,r}) \rvert}{V(r, X^x_{t,r})} \cdot
  \frac{\sqrt{T-r}}{\sqrt{T-r}}\cdot 
  \frac{V(r, X^x_{t,r}) \, \lvert \lvert \lvert Z^x_{t,r} \rvert \rvert \rvert}{V(t,x)} \right] \,\d r \\
  &\leq \int_t^T \left[ \sup \limits_{s \in [0,T)} \sup \limits_{y \in O} \left( \frac{\abs{ h_n(s,y) - h_0(s,y)}}{V(s,y)}
  \sqrt{T-s} \right) \right] 
  \frac{\E \left[ V(r, X^x_{t,r}) \; \lvert \lvert \lvert Z^x_{t,r} \rvert \rvert \rvert  \right]}{\sqrt{T-r}\, V(t,x)} \,\d r\\
  &\leq  \left[ \sup \limits_{s \in [0,T)} \sup \limits_{y \in O} \left( \frac{\lvert h_n(s,y) - h_0(s,y)\rvert}{V(s,y)}\sqrt{T-s} \right) \right] 
  \int_t^T \frac{c}{\sqrt{(T-r)(r-t)}}  \,\d r.
  \end{split}
  \end{equation}
  Item~\ref{it:int1}
  of Lemma~\ref{lem:integral}
  (applied for every $t\in [0,T)$ 
  with $a\curvearrowleft t$
  $b\curvearrowleft T$,
  $\lambda\curvearrowleft 0$
  in the notation of 
  Lemma~\ref{lem:integral})
  hence demonstrates that
  for all
  $n \in \N$, $t \in [0,T)$, $x \in O$
  it holds that
  \begin{equation}
  \begin{split}
    &\frac{\E \left[ \int_t^T \lvert \lvert \lvert h_n(r,X^x_{t,r})Z^x_{t,r} 
    - h_0(r,X^x_{t,r})Z^x_{t,r} \rvert \rvert \rvert \,\d r \right]}{V(t,x)} \\
    &\leq c\pi \left[ \sup \limits_{s \in [0,T)} \sup \limits_{y \in O} \left( \frac{\lvert h_n(s,y) - h_0(s,y)}{V(s,y)}\sqrt{T-s} \right) \right].
  \end{split}
  \end{equation}
  Combining this with
  \eqref{eq:approximation_result1_ass}
  shows that
  \begin{equation}
  \begin{split}
    &\limsup \limits_{n \to \infty} \left[ \sup \limits_{t \in [0,T)} \sup \limits_{x \in O} \left( \frac{\bigg\lvert \bigg\lvert \bigg\lvert \E \left[ \int_t^T h_n(r, X^x_{t,r})Z^x_{t,s} \,\d r \right] - \E \left[ \int_t^T h_0(r, X^x_{t,r}) Z^x_{t,r} \,\d r \right] \bigg\rvert \bigg\rvert \bigg\rvert }{V(t,x)}  \, \sqrt{T-t} \right) \right]\\
    &=0.
  \end{split}
  \end{equation}
  The triangle inequality,
  item~\ref{it:app_res2},
  and
  \eqref{eq:ar1_inequality_g}
  hence ensure that
  \begin{equation}
     \limsup \limits_{n \to \infty} \left[ 
     \sup \limits_{t \in [0,T)} \sup \limits_{x \in O} \left( \frac{\lvert \lvert \lvert v_n(t,x) - v_0(t,x) \rvert \rvert \rvert}{V(t,x)}  \, \sqrt{T-t} \right) \right] 
     = 0.
  \end{equation}
  This demonstrates item~\ref{it:app_res3}.
  Moreover, observe that item~\ref{it:app_res3}
  and the hypothesis that
  $V\colon[0,T] \times O \to (0, \infty)$
  is continuous demonstrate that
  for all 
  $\mathcal{K} \subseteq [0,T)\times O$ compact
  it holds that
  \begin{equation}
  \begin{split}
    &\limsup \limits_{n \to \infty} \left[ 
    \sup \limits_{(t,x) \in \mathcal{K}} \left( \lvert \lvert \lvert v_n(t,x) - v_0(t,x) \rvert \rvert \rvert  \, \sqrt{T-t} \right) \right] \\
    &\leq \limsup \limits_{n \to \infty} \left[ 
    \sup \limits_{(t,x) \in \mathcal{K}} \left( \frac{\lvert \lvert \lvert v_n(t,x) - v_0(t,x) \rvert \rvert \rvert}{V(t,x)}  \, \sqrt{T-t} \right) \right] \left[
    \sup \limits_{(t,x) \in \mathcal{K}} V(t,x) \right]\\ 
    &= 0.
  \end{split}
  \end{equation}
  This establishes item~\ref{it:app_res4}.
  The proof of 
  Lemma~\ref{lem:approximation_result1}
  is thus complete.
\end{proof}

The next lemma shows the
construction of an approximating series
of compactly supported, continuous 
function for a certain class of 
continuous functions.
Lemma~\ref{lem:h_approximation} is a generalization of the results in \cite[Lemma 2.3]{BGHJ2019}.

\begin{lemma}\label{lem:h_approximation}
  Let $d \in \N$,
  $T \in (0, \infty)$, 
  let $\lvert \lvert  \cdot  \rvert \rvert\colon \mathbb{R}^d \to [0, \infty)$
  be a norm,
  let $O \subseteq \R^d$
  be a non-empty open set,
  for every 
  $r \in (0, \infty)$
  let $K_r\subseteq [0,T)$,
  $O_r \subseteq O$
  satisfy
  $K_r=[0,\max\{T-\frac{1}{r},0\}]$ and
  $O_r = \{  x \in O\colon \lvert \lvert x \rvert \rvert \leq r \text{ and } \{ y \in \R^d\colon \lvert \lvert y-x \rvert \rvert 
  <\frac{1}{r} \} \subseteq O \}$,
  and let $h \in C([0,T) \times O, \R)$
  satisfy
  $\inf_{r\in(0,\infty)}[\sup_{t\in [0,T)\setminus K_r}
  \sup_{x\in O\setminus O_r}
  (\abs{h(t,x)}\sqrt{T-t})]=0$.
  Then there exist compactly supported
  $h_n\in C([0,T)\times O,\R)$,
  $n\in\N$, which satisfy that
  \begin{equation}
    \limsup_{n\to\infty}\big[
    \sup_{t\in [0,T)}\sup_{x\in O}
    \big(\abs{h_n(t,x)-h(t,x)}
    \sqrt{T-t}\big)\big]
    =0.
  \end{equation}
\end{lemma}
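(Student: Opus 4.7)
The plan is to obtain the $h_n$ by multiplying $h$ with continuous cutoff functions adapted to the exhausting family $(K_n\times O_n)_{n\in\N}$ of compact subsets of $[0,T)\times O$. First I would produce cutoffs $\psi_n\in C([0,T),[0,1])$ and $\phi_n\in C(O,[0,1])$ that equal $1$ on $K_n$ and on $O_n$ respectively and whose supports are contained in $K_{n+1}$ and $O_{n+1}$ respectively; this can be done with the standard normalized-distance recipe
\begin{equation}
  \phi_n(x):=\frac{\mathrm{dist}(x,\,O\setminus O_{n+1})}{\mathrm{dist}(x,O_n)+\mathrm{dist}(x,\,O\setminus O_{n+1})}, \qquad x\in O,
\end{equation}
together with a piecewise-linear construction for $\psi_n$ on $[0,T)$. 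The denominator is strictly positive on $O$ because $O_n$ lies in the interior (in $\R^d$) of $O_{n+1}$: indeed, for $x\in O_n$ one has $\|x\|\leq n<n+1$, and the open ball of radius $\tfrac{1}{n}$ around $x$ is contained in $O$, so the open ball of radius $\min\{1,\tfrac{1}{n(n+1)}\}$ around $x$ is still contained in $O_{n+1}$.

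Setting $h_n(t,x):=\psi_n(t)\phi_n(x)h(t,x)$, continuity of $h_n$ is immediate, and the support of $h_n$ is contained in $K_{n+1}\times O_{n+1}$, which is compact in $[0,T)\times O$ (the set $O_{n+1}$ is closed in $\R^d$ as the intersection of two closed conditions, bounded, and contained in $O$, and $K_{n+1}=[0,\max\{T-\tfrac{1}{n+1},0\}]$ is compact in $[0,T)$). Since $\psi_n\phi_n$ takes values in $[0,1]$ and equals $1$ on $K_n\times O_n$, I would deduce
\begin{equation}
  \sup_{(t,x)\in[0,T)\times O}|h_n(t,x)-h(t,x)|\sqrt{T-t}
  \leq \sup_{(t,x)\in([0,T)\times O)\setminus (K_n\times O_n)}|h(t,x)|\sqrt{T-t},
\end{equation}
and then invoke the hypothesis on $h$ to drive the right-hand side to $0$ as $n\to\infty$.

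The only mildly subtle point is the construction of the cutoffs — namely verifying that the normalized-distance formula above is continuous on the full domain $O$ and has the claimed support, which (as indicated) reduces to the strict nesting $O_n\subset\mathrm{int}(O_{n+1})$ together with its $t$-analogue $K_n\subset\mathrm{int}_{[0,T)}(K_{n+1})$ built into the definitions of $K_r$ and $O_r$. Once the cutoffs are in place, the rest is a direct cutoff-and-estimate argument that uses only the stated assumption on $h$.
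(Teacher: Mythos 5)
Your construction is the same as the paper's up to the form of the cutoff: the paper invokes Urysohn's lemma to produce a single $\varphi_n\in C([0,T)\times O,[0,1])$ with $\mathbbm{1}_{K_n\times O_n}\leq\varphi_n\leq\mathbbm{1}_{U_n}$ for a slightly thickened open set $U_n\supseteq K_n\times O_n$ and sets $h_n=\varphi_n h$, whereas you build explicit normalized-distance cutoffs $\psi_n(t)\phi_n(x)$; both are standard, and your verifications of the strict nesting of $O_n$ in the interior of $O_{n+1}$ and of the compactness of $O_{n+1}$ are correct.

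The final step, however, has a gap, and it is worth flagging because it is also present in the paper's own proof. Your displayed estimate correctly reduces matters to showing $\sup_{(t,x)\in([0,T)\times O)\setminus(K_n\times O_n)}\abs{h(t,x)}\sqrt{T-t}\to 0$, i.e.\ the supremum over all $(t,x)$ with $t\notin K_n$ \emph{or} $x\notin O_n$. But the stated hypothesis controls only $\sup_{t\in[0,T)\setminus K_n}\sup_{x\in O\setminus O_n}\abs{h(t,x)}\sqrt{T-t}$, i.e.\ the supremum over the strictly smaller set where $t\notin K_n$ \emph{and} $x\notin O_n$; it says nothing about, for instance, $t$ lying in a fixed compact subinterval of $[0,T)$ with $x$ approaching $\partial O$. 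The implication genuinely fails: take $d=1$, $T=1$, $O=(0,1)$, $h(t,x)=\max\{1-2t,0\}/x$. Then for $r\geq 2$ one has $[0,1)\setminus K_r\subseteq(\tfrac{1}{2},1)$ and $h\equiv 0$ there, so the hypothesis holds, yet any compactly supported $h_n$ vanishes for $x$ near $0$ while $h(0,x)=1/x$ is unbounded, whence $\sup_{(t,x)}\abs{h_n(t,x)-h(t,x)}\sqrt{1-t}=\infty$ for every $n$. Thus ``invoke the hypothesis'' does not close the argument as written; the lemma needs the stronger hypothesis $\inf_{r\in(0,\infty)}\big[\sup_{(t,x)\in([0,T)\times O)\setminus(K_r\times O_r)}\abs{h(t,x)}\sqrt{T-t}\big]=0$, under which your cutoff construction (and the paper's) goes through without modification.
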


\begin{proof}[Proof of Lemma~\ref{lem:h_approximation}]
  Throughout this proof let
  $U_n\subseteq [0,T)\times O$, $n\in\N$,
  satisfy for all $n\in\N$ that
  $U_n=\{(t,x)\in [0,T)\times O\colon
  (\exists (s,y)\in K_n\times O_n\colon
  \max\{\abs{s-t}, \norm{y-x}\}
  < \frac{1}{2n})\}$.
  Observe that for every $n\in\N$
  it holds that $K_n\times O_n
  \subseteq [0,T)\times O$ 
  is a compact set,
  $U_n\subseteq [0,T)\times O$ is an open set,
  and $ K_n\times O_n \subseteq U_n$.
  Urysohn's lemma 
  (cf., e.g., 
  \cite[Lemma 2.12]{Rudin1974})
  therefore demonstrates that for 
  all $n\in\N$ there exists
  $\varphi_n\in C([0,T)\times O, \R)$
  which satisfies for all 
  $t\in[0,T)$, $x\in O$ that
  $\mathbbm{1}_{K_n\times O_n}(t,x)
  \leq \varphi_n(t,x)
  \leq \mathbbm{1}_{U_n}(t,x)$.
  Let $h_n\colon [0,T)\times O\to \R$,
  $n\in\N$, satisfy for all
  $n\in\N$, $t\in[0,T)$, $x\in O$
  that
  $h_n(t,x)=\varphi(t,x)h(t,x)$.
  This and the fact that for all
  $n\in\N$ it holds that $\varphi_n$
  has compact support 
  implies that for all $n\in\N$
  it holds that
  $h_n\colon [0,T)\times O\to \R$
  is a continuous function with
  compact support.
  In addition, observe that
  \begin{equation}
  \begin{split}
    &\limsup_{n\to\infty}\big[\sup_{t\in[0,T)}
    \sup_{x\in O}\big(
    \abs{h_n(t,x)-h(t,x)}\sqrt{T-t}\big) \big]\\
    &=  \limsup_{n\to\infty}\big[\sup_{t\in[0,T)}
    \sup_{x\in O}([\varphi_n(t,x)-1]
    \abs{h(t,x)}\sqrt{T-t}) \big]\\
    &\leq \limsup_{n\to\infty}\big[ \sup_{t\in [0,T)\setminus K_n}\sup_{x\in O\setminus O_n}
    \big(\abs{h(t,x)}\sqrt{T-t}\big) \big]
    =0.
  \end{split}
  \end{equation}
  The proof of Lemma~\ref{lem:h_approximation}
  is thus complete.
\end{proof}

The following corollary is a consequence
of Lemma~\ref{lem:h_approximation} and
proves for a certain class of 
continuous functions the existence of a 
series of compactly supported,
continuous functions which satisfy
a specific approximation property.
Corollary~\ref{cor:h_approximation} 
is a generalization of the results in \cite[Corollary 2.4]{BGHJ2019}.

\begin{corollary}\label{cor:h_approximation}
  Let $d \in \N$,
  $T \in (0, \infty)$, 
  let $\lvert \lvert  \cdot  \rvert \rvert\colon \mathbb{R}^d \to [0, \infty)$
  be a norm,
  let $O \subseteq \R^d$
  be a non-empty open set,
  for every 
  $r \in (0, \infty)$
  let $K_r\subseteq [0,T)$,
  $O_r \subseteq O$
  satisfy
  $K_r=[0,\max\{T-\frac{1}{r},0\}]$ and
  $O_r = \{  x \in O\colon \lvert \lvert x \rvert \rvert \leq r \text{ and } \{ y \in \R^d\colon \lvert \lvert y-x \rvert \rvert 
  <\frac{1}{r} \} \subseteq O \}$,
  let $h \in C([0,T) \times O, \R)$, 
  $V\in C([0,T]\times O,(0,\infty))$,
  and assume that
  $\inf_{r\in(0,\infty)}[\sup_{t\in [0,T)\setminus K_r}
  \sup_{x\in O\setminus O_r}
  (\frac{\abs{h(t,x)}}{V(t,x)}\sqrt{T-t})]=0$.
  Then there exist compactly supported
  $h_n\in C([0,T)\times O,\R)$,
  $n\in\N$, which satisfy that
  \begin{equation}
    \limsup_{n\to\infty}\bigg[
    \sup_{t\in [0,T)}\sup_{x\in O}
    \bigg(\frac{\abs{h_n(t,x)-h(t,x)}}{V(t,x)}
    \sqrt{T-t}\bigg)\bigg]
    =0.
  \end{equation}
\end{corollary}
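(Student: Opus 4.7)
The plan is to reduce the corollary to Lemma~\ref{lem:h_approximation} by dividing out the weight $V$. More precisely, I would introduce the auxiliary function $\tilde h \colon [0,T) \times O \to \R$ defined by $\tilde h(t,x) = h(t,x)/V(t,x)$. Since $V \in C([0,T]\times O,(0,\infty))$ is strictly positive and continuous, and $h \in C([0,T)\times O,\R)$, the quotient $\tilde h$ lies in $C([0,T)\times O,\R)$. The hypothesis
$\inf_{r\in(0,\infty)}[\sup_{t\in [0,T)\setminus K_r}\sup_{x\in O\setminus O_r}(\frac{|h(t,x)|}{V(t,x)}\sqrt{T-t})]=0$
then translates verbatim into the condition
$\inf_{r\in(0,\infty)}[\sup_{t\in [0,T)\setminus K_r}\sup_{x\in O\setminus O_r}(|\tilde h(t,x)|\sqrt{T-t})]=0$
required for the application of Lemma~\ref{lem:h_approximation} to $\tilde h$.

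Applying Lemma~\ref{lem:h_approximation} to $\tilde h$ would then yield compactly supported functions $\tilde h_n \in C([0,T)\times O,\R)$, $n\in\N$, which satisfy
\begin{equation}
\limsup_{n\to\infty}\big[\sup_{t\in[0,T)}\sup_{x\in O}\big(|\tilde h_n(t,x)-\tilde h(t,x)|\sqrt{T-t}\big)\big] = 0.
\end{equation}
I would then set $h_n(t,x) := \tilde h_n(t,x)\, V(t,x)$ for all $n\in\N$, $t\in[0,T)$, $x\in O$. Since $V$ is continuous and $\tilde h_n$ is continuous and compactly supported, $h_n$ is continuous. The support of $h_n$ is contained in the support of $\tilde h_n$ (as $V$ is nowhere zero, $h_n$ vanishes wherever $\tilde h_n$ does), so $h_n$ is compactly supported as well.

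Finally, the identity $h_n(t,x) - h(t,x) = (\tilde h_n(t,x) - \tilde h(t,x))\,V(t,x)$ gives
\begin{equation}
\frac{|h_n(t,x)-h(t,x)|}{V(t,x)}\sqrt{T-t} = |\tilde h_n(t,x) - \tilde h(t,x)|\sqrt{T-t},
\end{equation}
and taking the supremum over $(t,x) \in [0,T)\times O$ and then $\limsup_{n\to\infty}$ yields the desired convergence directly from the convergence for $\tilde h_n$. There is no real obstacle in this argument; the only subtlety is the verification that $h_n$ inherits compact support from $\tilde h_n$, which is immediate from the strict positivity of $V$. Thus the corollary follows at once from Lemma~\ref{lem:h_approximation} via this rescaling.
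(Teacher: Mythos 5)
Your proposal is correct and takes essentially the same approach as the paper's proof: introduce $g(t,x)=h(t,x)/V(t,x)$ (your $\tilde h$), apply Lemma~\ref{lem:h_approximation} to it, and multiply the resulting approximants back by $V$. If anything, you are slightly more careful than the paper in explicitly noting that $h_n=\tilde h_n V$ inherits compact support because $V$ is nowhere zero.
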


\begin{proof}[Proof of Corollary~\ref{cor:h_approximation}]
  Throughout this proof let 
  $g\colon [0,T)\times O\to \R$
  satisfy for all $t\in[0,T)$, $x\in O$
  that 
  $g(t,x)=\frac{h(t,x)}{V(t,x)}$.
  Note that the fact that
  $h\in C([0,T)\times O,\R)$ and
  $V\in C([0,T]\times O, (0,\infty))$
  and the assumption that
  $\inf_{r\in(0,\infty)}[\sup_{t\in [0,T)\setminus K_r}$
  $\sup_{x\in O\setminus O_r}
  (\frac{\abs{h(t,x)}}{V(t,x)}\sqrt{T-t})]=0$
  ensure that $g\in C([0,T)\times O,\R)$
  and 
  \begin{equation}\label{eq:g_n_convergence1}
    \inf\nolimits_{r\in (0,\infty)}\big[
    \sup\nolimits_{t\in[0,T)\setminus K_r}
    \sup\nolimits_{x\in O\setminus O_r}
    \big(\abs{g(t,x)}\sqrt{T-t}\big)\big]=0.
  \end{equation}
  Lemma~\ref{lem:h_approximation}
  (applied with $h\curvearrowleft g$
  in the notation of Lemma~\ref{lem:h_approximation})
  hence demonstrates that there exist
  compactly supported 
  $g_n\in C([0,T)\times O,\R)$, $n\in\N$,
  which satisfy that
  \begin{equation}\label{eq:g_n_convergence2}
    \limsup_{n\to\infty}\big[ \sup_{t\in[0,T)}
    \sup_{x\in O} \big(
    \abs{g_n(t,x)-g(t,x)}
    \sqrt{T-t}\big) \big] 
    =0.
  \end{equation}
  In the next step let
  $h_n\colon [0,T)\times O\to \R$, $n\in\N$,
  satisfy for all $n\in\N$, $t\in [0,T)$,
  $x\in O$ that
  $h_n(t,x)=g_n(t,x)V(t,x)$.
  Observe that \eqref{eq:g_n_convergence2}
  demonstrates that for all $n\in\N$
  it holds that $h_n\in C([0,T)\times O,\R)$
  and 
  \begin{equation}
  \begin{split}
    &\limsup_{n\to\infty} \bigg[\sup_{t\in[0,T)}
    \sup_{x\in O} \bigg(\frac{\abs{h_n(t,x)-h(t,x)}}{V(t,x)}\sqrt{T-t} \bigg)\bigg]\\
    &= \limsup_{n\to\infty} \bigg[\sup_{t\in[0,T)}
    \sup_{x\in O} \Big( \abs{g_n(t,x)-g(t,x)}
    \sqrt{T-t} 
    \Big)\bigg]
    =0.
  \end{split}
  \end{equation}
  The proof of 
  Corollary~\ref{cor:h_approximation}
  is thus complete.
\end{proof}

\subsection{Continuity of solutions of SFPEs}
\label{subsec:regularity}

In this section we establish 
well-definedness and continuity 
properties of SFPE solutions.
The following lemma shows 
well-definedness and
continuity of SFPE solutions
under the assumption that the terminal
condition and the nonlinearity are
bounded.
Lemma~\ref{lem:continuity1}
is a generalization of the results in \cite[Lemma 2.5]{BGHJ2019}.

\begin{lemma}\label{lem:continuity1}
Assume Setting~\ref{setting:sfpe},
let $g \in C(O, \R)$,
$h \in C([0,T) \times O, \R)$ 
be bounded,
and assume $\inf_{t\in[0,T]}\inf_{x\in O}
V(t,x)>0$ 
and for all
$\varepsilon \in (0, \infty)$, $t \in [0,T)$, $s\in (t,T]$, $x\in O$
that
\begin{equation}\label{eq:X_Z_cont}
  \limsup\nolimits_{[0,s)\times O \ni(u,y)\to(t,x)} 
  \big[ \mathbb{P} ( \lvert \lvert X^{y}_{u, s} - X^{x}_{t, s} \rvert \rvert > \varepsilon)
  + \E [ \lvert \lvert \lvert Z^{y}_{u, s} 
  - Z^{x}_{t, s} \rvert \rvert 
  \rvert ] \big] = 0.
\end{equation}
Then
\begin{equation}\label{eq:cont1}
[0,T) \times O \ni (t,x) \mapsto \E \left[ g(X^x_{t,T})Z^x_{t,T} + \int_t^T h(r, X^x_{t,r}) Z^x_{t,r} \,\d r \right] \in \R^m
\end{equation}
is well-defined and continuous.
\end{lemma}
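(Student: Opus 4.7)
The plan is to treat well-definedness and continuity in turn. For \emph{well-definedness}, set $M := \sup_{x \in O}\abs{g(x)} + \sup_{(s,y)}\abs{h(s,y)} < \infty$; the hypothesis $\inf_{[0,T]\times O} V > 0$ then gives $\sup_{t \in [0,T)}\sup_{x \in O}[\frac{\abs{g(x)}}{V(T,x)} + \frac{\abs{h(t,x)}}{V(t,x)}\sqrt{T-t}] < \infty$, so Lemma~\ref{lem:integrability} applies and the expectation in \eqref{eq:cont1} is finite for every $(t,x) \in [0,T)\times O$.

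For \emph{continuity}, fix $(t,x) \in [0,T)\times O$ and any sequence $(u_n,y_n) \to (t,x)$. Decompose the map in \eqref{eq:cont1} as $G(u,y) + H(u,y)$ with $G(u,y) := \E[g(X^y_{u,T})Z^y_{u,T}]$ and $H(u,y) := \E[\int_u^T h(r,X^y_{u,r})Z^y_{u,r}\,\d r]$, and show $G(u_n,y_n) \to G(t,x)$ and $H(u_n,y_n) \to H(t,x)$ separately. The main devices are the product-difference identity $AB - CD = A(B-D) + (A-C)D$, the continuity assumption \eqref{eq:X_Z_cont}, continuity of $g$ and $h$, and the bound $\E[\lvert\lvert\lvert Z^y_{u,s}\rvert\rvert\rvert] \leq c V(u,y)(\inf V)^{-1}(s-u)^{-1/2}$ issuing from Setting~\ref{setting:sfpe} and $\inf V > 0$. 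For $G$, applying the identity yields a bound by $M\,\E[\lvert\lvert\lvert Z^{y_n}_{u_n,T} - Z^x_{t,T}\rvert\rvert\rvert] + \E[\abs{g(X^{y_n}_{u_n,T}) - g(X^x_{t,T})}\lvert\lvert\lvert Z^x_{t,T}\rvert\rvert\rvert]$; the first summand vanishes by \eqref{eq:X_Z_cont} with $s=T$, while the second follows from continuity of $g$, the convergence $X^{y_n}_{u_n,T} \to X^x_{t,T}$ in probability (from \eqref{eq:X_Z_cont}), the pointwise bound $2M$, and dominated convergence with integrable dominator $2M\lvert\lvert\lvert Z^x_{t,T}\rvert\rvert\rvert$.

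For $H$, set $a_n := \min\cu{u_n,t}$, $b_n := \max\cu{u_n,t}$. Then $H(u_n,y_n) - H(t,x)$ decomposes into a short integral over $[a_n,b_n]$ involving only the integrand with starting time $a_n$, plus an integral over $[b_n,T]$ of the difference of the two integrands. The $V$-moment bound controls the short piece by a multiple of $\sqrt{b_n-a_n} = \sqrt{\abs{u_n-t}} \to 0$. For the $[b_n,T]$ piece, fix $\delta > 0$ and split it as $[b_n,t+\delta] \cup [t+\delta,T]$ (valid for large $n$); on $[t+\delta,T]$ the integrand is dominated uniformly in $n$ by a $\delta$-dependent constant, and pointwise in $r$ the $AB - CD$ argument used for $G$ gives $\E[\abs{h(r,X^{y_n}_{u_n,r})Z^{y_n}_{u_n,r} - h(r,X^x_{t,r})Z^x_{t,r}}] \to 0$, so dominated convergence yields a vanishing contribution; on $[b_n, t+\delta]$ the $V$-moment bound yields an $\mathcal{O}(\sqrt{\delta + \abs{u_n-t}})$ estimate. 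Taking $\limsup_n$ leaves an $\mathcal{O}(\sqrt{\delta})$ bound, and letting $\delta \downarrow 0$ closes the argument. The \emph{main obstacle} is exactly this $H$-argument: the $(r-u)^{-1/2}$ singularity of the $Z$-moment bound at the moving left endpoint obstructs a direct application of dominated convergence, and the $\delta$-truncation separating a small neighborhood of $t$ from the rest is the essential device for overcoming it.
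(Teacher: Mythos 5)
Your proposal is correct and follows essentially the same route as the paper's proof: well-definedness via Lemma~\ref{lem:integrability} together with $\inf V>0$; continuity split into a terminal term $G$ handled by the product-difference identity and dominated convergence with dominator $2M\lvert\lvert\lvert Z^x_{t,T}\rvert\rvert\rvert$; and the integral term $H$ handled by peeling off a short interval near the moving starting time and controlling it through the $V$-moment bound, exactly as in the paper's \eqref{eq:hZ_conv2}--\eqref{eq:hZ_conv5}. The one small difference is that you make the $\delta$-truncation near the singular left endpoint explicit in order to justify passing to the limit on the long interval, whereas the paper folds this into a terse dominated-convergence invocation citing \eqref{eq:Z0_bd}; your version is arguably a touch more transparent about why the $(r-u)^{-1/2}$ singularity does not cause trouble, but the underlying argument is the same.
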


\begin{proof}[Proof of Lemma \ref{lem:continuity1}]
Throughout this proof let 
$M\in [0,\infty)$
satisfy 
$M=\sup_{t\in [0,T)}\sup_{x\in O} 
[\abs{g(x)}+\abs{h(t,x)}]$.
First note that 
\begin{equation}
  \sup_{t\in[0,T)}\sup_{x\in O}
  \bigg[\frac{\abs{g(x)}}{V(T,x)}
  +\frac{\abs{h(t,x)}}{V(t,x)}\sqrt{T-t}\bigg]
  \leq \frac{M(1+\sqrt{T})}{\inf_{t\in[0,T]}\inf_{x\in O}V(t,x)}
  <\infty.
\end{equation}
Lemma~\ref{lem:integrability}
and the fact that 
for all $t\in[0,T)$,
$s\in (t,T]$, $x\in O$
it holds that
$\E[V(s,X^x_{t,s})\lvert\lvert\lvert Z^x_{t,s}\rvert\rvert\rvert]
\leq \frac{c}{\sqrt{s-t}}V(t,x)$
hence demonstrate that 
for all
$t \in [0,T)$, $x \in O$ 
it holds that
\begin{equation}\label{eq:gZ_hZ_int}
\E \bigg[ \lvert g(X^x_{t,T}) \rvert \; \lvert \lvert \lvert Z^x_{t,T} \rvert \rvert \rvert
+ \int_t^T \lvert h(r, X^x_{t,r}) \rvert \; \lvert \lvert \lvert Z^x_{t,r} \rvert \rvert \rvert \,\d r \bigg]
    <\infty.
\end{equation}
This proves that
\eqref{eq:cont1}
is well-defined.
In the next step 
for every $t\in [0,T]$, $x\in O$ 
let $Z^x_{t,t}=0$,
$h(T,x)=g(x)$,
and let $(t_n,x_n)\in [0,T) \times O$, 
$n \in \N_0$,
satisfy 
$\limsup_{n \to \infty } [ \lvert t_n - t_0 \rvert + \lvert \lvert x_n - x_0 \rvert \rvert ]
= 0$.
Note that the fact that
$\limsup_{n \to \infty } \left[ \lvert t_n - t_0 \rvert + \lvert \lvert x_n - x_0 \rvert \rvert \right]
= 0$
ensures that there exists
$q\in (0,\infty)$ which satisfies 
for all $n\in\N_0$ that 
$(t_n,x_n)\in K_{q}\times O_{q}$.
  Furthermore, observe that
  the assumption that
  $\sup_{t\in [0,T)}\sup_{x\in O} 
  \abs{h(t,x)} \leq M$  
  proves that
  for all $n\in\N$, $s\in [0,T)$
  it holds that
  \begin{equation}\label{eq:hZ_dom}
  \begin{split}
    &\E\big[\lvert\lvert\lvert h(s,X^{x_n}_{t_n,\max\{s,t_n\}})
    Z^{x_n}_{t_n,\max\{s,t_n\}}
    -h(s,X^{x_0}_{t_0,\max\{s,t_0\}})
    Z^{x_0}_{t_0,\max\{s,t_0\}}
    \rvert\rvert\rvert \big]\\
    &\leq \E\big[\lvert\lvert\lvert h(s,X^{x_n}_{t_n,\max\{s,t_n\}})
    Z^{x_n}_{t_n,\max\{s,t_n\}}
    -h(s,X^{x_n}_{t_n,\max\{s,t_n\}})
    Z^{x_0}_{t_0,\max\{s,t_0\}}
    \rvert\rvert\rvert \big]\\
    &\quad + \E\big[\lvert\lvert\lvert 
    h(s,X^{x_n}_{t_n,\max\{s,t_n\}})
    Z^{x_0}_{t_0,\max\{s,t_0\}}
    -h(s,X^{x_0}_{t_0,\max\{s,t_0\}})
    Z^{x_0}_{t_0,\max\{s,t_0\}}
    \rvert\rvert\rvert \big]\\
    &\leq \E\big[\abs{h(s,X^{x_n}_{t_n,\max\{s,t_n\}})}\,\lvert\lvert\lvert 
    Z^{x_n}_{t_n,\max\{s,t_n\}}
    -Z^{x_0}_{t_0,\max\{s,t_0\}}
    \rvert\rvert\rvert \big]\\
    &\quad + \E\big[\lvert\lvert\lvert h(s,X^{x_n}_{t_n,\max\{s,t_n\}})
    Z^{x_0}_{t_0,\max\{s,t_0\}}
    -h(s,X^{x_0}_{t_0,\max\{s,t_0\}})
    Z^{x_0}_{t_0,\max\{s,t_0\}}
    \rvert\rvert\rvert \big]\\
    &\leq M \E\big[\lvert\lvert\lvert 
    Z^{x_n}_{t_n,\max\{s,t_n\}}
    -Z^{x_0}_{t_0,\max\{s,t_0\}}
    \rvert\rvert\rvert \big]\\
     &\quad + \E\big[\lvert\lvert\lvert h(s,X^{x_n}_{t_n,\max\{s,t_n\}})
     Z^{x_0}_{t_0,\max\{s,t_0\}}
    -h(s,X^{x_0}_{t_0,\max\{s,t_0\}})
    Z^{x_0}_{t_0,\max\{s,t_0\}}
    \rvert\rvert\rvert \big] .
  \end{split}
  \end{equation}
  Next note that
  the fact that
  $\sup_{t\in [0,T)}\sup_{x\in O} 
  \abs{h(t,x)}\leq M$
  ensures
  that for all $n\in\N$,
  $s\in[0,T)$ it holds that
  \begin{equation}\label{eq:h_dom}
    \lvert\lvert\lvert h(s,X^{x_n}_{t_n,\max\{s,t_n\}})
     Z^{x_0}_{t_0,\max\{s,t_0\}}
    -h(s,X^{x_0}_{t_0,\max\{s,t_0\}})
    Z^{x_0}_{t_0,\max\{s,t_0\}}
    \rvert\rvert\rvert
    \leq 2M \lvert\lvert\lvert Z^{x_0}_{t_0,\max\{s,t_0\}}
    \rvert\rvert\rvert.
  \end{equation}
  Moreover, observe that
  the hypothesis that for all
  $t\in[0,T)$, $s\in (t,T]$, $x\in O$ 
  it holds that 
  $\E[V(s,X^x_{t,s})\lvert\lvert\lvert   Z^x_{t,s}\rvert\rvert\rvert]
  \leq \frac{c}{\sqrt{s-t}}V(t,x)$,
  the 
  fact that
  for all $n\in\N_0$ it holds that 
  $(t_n,x_n)\in K_{q}\times O_{q}$,
  and the assumption that
  $\inf_{t\in[0,T]}\inf_{x\in O}
V(t,x)>0$
  show that 
  for all $n\in\N$, $s\in(t_0,T]$
  it holds that
  \begin{equation}\label{eq:Z0_bd}
  \begin{split}
    &\E[\lvert\lvert\lvert Z^{x_0}_{t_0,s}\rvert\rvert\rvert]
    = \E\bigg[ 
    \frac{V(s,X^{x_0}_{t_0,s})}{V(s,X^{x_0}_{t_0,s})} 
    \lvert\lvert\lvert Z^{x_0}_{t_0,s}\rvert\rvert\rvert\bigg]
    \leq \bigg(\sup_{r\in [0,T]}\sup_{y\in O}
    \frac{1}{V(r,y)}\bigg)
    \E\bigg[V(s,X^{x_0}_{t_0,s})  
    \lvert\lvert\lvert Z^{x_0}_{t_0,s}\rvert\rvert\rvert\bigg]\\
    &\leq \bigg(\sup_{r\in [0,T]}\sup_{y\in O}
    \frac{1}{V(r,y)}\bigg)
   \frac{c}{\sqrt{s-t_0}} V(t_0,x_0)
   <\infty.
  \end{split}
  \end{equation}  
  Lebesgue's dominated convergence theorem,
  \eqref{eq:X_Z_cont},
  \eqref{eq:hZ_dom},
  and \eqref{eq:h_dom} 
  hence demonstrate that
  for all $s\in [0,T)$
  it holds that
  \begin{equation}\label{eq:g_h_conv}
    \limsup_{n\to\infty}
    \E\big[\lvert\lvert\lvert 
    h(s,X^{x_n}_{t_n,\max\{s,t_n\}})
    Z^{x_n}_{t_n,\max\{s,t_n\}}
    -h(s,X^{x_0}_{t_0,\max\{s,t_0\}})
    Z^{x_0}_{t_0,\max\{s,t_0\}}
    \rvert\rvert\rvert \big]
    =0.
  \end{equation}
  Moreover, observe that
  \eqref{eq:hZ_dom}
  and the assumption that
  $\sup_{t\in [0,T)}\sup_{x\in O} 
  \abs{h(t,x)}
  \leq M$ 
  imply  
  that for all $n\in\N$, $s\in[0,T)$
  it holds that  
  \begin{equation}
  \begin{split}
    &\E\big[\lvert\lvert\lvert h(s,X^{x_n}_{t_n,\max\{s,t_n\}})
    Z^{x_n}_{t_n,\max\{s,t_n\}}
    -h(s,X^{x_0}_{t_0,\max\{s,t_0\}})
    Z^{x_0}_{t_0,\max\{s,t_0\}}
    \rvert\rvert\rvert \big]\\
    &\leq M \E\big[\lvert\lvert\lvert 
    Z^{x_n}_{t_n,\max\{s,t_n\}}
    -Z^{x_0}_{t_0,\max\{s,t_0\}}
    \rvert\rvert\rvert \big]
    + 2M\E\big[\lvert\lvert\lvert 
     Z^{x_0}_{t_0,\max\{s,t_0\}}
    \rvert\rvert\rvert \big].
  \end{split}
  \end{equation}
  The dominated convergence theorem,
  \eqref{eq:X_Z_cont},
  \eqref{eq:Z0_bd}, and \eqref{eq:g_h_conv}
  therefore show that 
  \begin{equation}\label{eq:hZ_conv1}
  \begin{split}
    \limsup \limits_{n \to \infty} \bigg( 
    \int_{t_0}^T\E\Big[ 
    &\lvert\lvert\lvert h(r, X^{x_n}_{t_n,\max\{r,t_n\}})
    Z^{x_n}_{t_n,\max\{r,t_n\}}
     -h(r, X^{x_0}_{t_0,\max\{r,t_0\}})
    Z^{x_0}_{t_0,\max\{r,t_0\}}
    \rvert\rvert\rvert\Big] \d r\bigg)
    =0.
  \end{split}
\end{equation}    
  In the next step note that 
  for all $k, n\in\N_0$ with 
  $t_n\leq t_k$
  it holds that
  \begin{equation}\label{eq:hZ_conv2}
  \begin{split}
    &\bigg\lvert\bigg\lvert\bigg\lvert
    \E\bigg[\int_{t_n}^T h(r, X^{x_n}_{t_n,r})
    Z^{x_n}_{t_n,r}\,\d r\bigg]
    -  \E\bigg[\int_{t_k}^T h(r, X^{x_k}_{t_k,r})
    Z^{x_k}_{t_k,r}\,\d r\bigg]
    \bigg\rvert\bigg\rvert\bigg\rvert\\
    &= \bigg\lvert\bigg\lvert\bigg\lvert
    \E\bigg[\int_{t_n}^{t_k} h(r, X^{x_n}_{t_n,r})
    Z^{x_n}_{t_n,r}\,\d r\bigg]
     + \E\bigg[\int_{t_k}^T 
    \Big( h(r, X^{x_n}_{t_n,r})
    Z^{x_n}_{t_n,r}
    -h(r, X^{x_k}_{t_k,r})
    Z^{x_k}_{t_k,r}
    \Big)\,\d r\bigg]
    \bigg\rvert\bigg\rvert\bigg\rvert\\
    &\leq 
     \E \bigg[ \int_{t_n}^{t_k}  
  \abs{h(r, X^{x_n}_{t_n,r})}\ 
  \lvert\lvert\lvert Z^{x_n}_{t_n,r}\rvert\rvert\rvert \,\d r\bigg]
    + \E\bigg[\int_{t_k}^T 
    \lvert\lvert\lvert h(r, X^{x_n}_{t_n,r})
    Z^{x_n}_{t_n,r}
    -h(r, X^{x_k}_{t_k,r})
    Z^{x_k}_{t_k,r}
    \rvert\rvert\rvert\,\d r\bigg].
  \end{split}
  \end{equation}
  Furthermore, observe that
  Fubini's theorem, the
  fact that $\sup_{t\in[0,T)\sup_{x\in O}}
  \abs{h(t,x)}\leq M$,
  and the assumption that
  for all $t\in[0,T)$,
$s\in (t,T]$, $x\in O$
it holds that
$\E[V(s,X^x_{t,s})\lvert\lvert\lvert Z^x_{t,s}\rvert\rvert\rvert]
\leq \frac{c}{\sqrt{s-t}}V(t,x)$
  ensure that for all $k,n\in\N_0$
  with $t_n\leq t_k$
  it holds that
  \begin{equation}\label{eq:hZ_conv3}
  \begin{split}
    &\E \bigg[ \int_{t_n}^{t_k}  
  \abs{h(r, X^{x_n}_{t_n,r})}\ 
  \lvert\lvert\lvert Z^{x_n}_{t_n,r}\rvert\rvert\rvert \,\d r\bigg] 
  \leq M \int_{t_n}^{t_k} \E\bigg[\frac{V(r, X^{x_n}_{t_n,r})}{V(r, X^{x_n}_{t_n,r})}\lvert\lvert\lvert Z^{x_n}_{t_n,r}\rvert\rvert\rvert\bigg] \,\d r\\
  &\leq \frac{M}{\inf_{t\in[0,T]}\inf_{x\in O}V(t,x)} \int_{t_n}^{t_k} \E\bigg[V(r, X^{x_n}_{t_n,r})
  \lvert\lvert\lvert Z^{x_n}_{t_n,r}\rvert\rvert\rvert\bigg] \,\d r\\
  &\leq \frac{M}{\inf_{t\in[0,T]}\inf_{x\in O}V(t,x)} \int_{t_n}^{t_k} V(t_n,x_n) \frac{c}{\sqrt{r-t_n}} \,\d r\\
  &\leq  \frac{cM}{\inf_{t\in[0,T]}\inf_{x\in O} V(t,x)} 
  \Big[\sup_{u\in K_q}\sup_{y\in O_q}V(u,y)\Big]
  \int_{0}^{t_k-t_n}  \frac{1}{\sqrt{z}} \,\d z.
  \end{split}
  \end{equation}
  Combining this with
  \eqref{eq:hZ_conv1},
  \eqref{eq:hZ_conv2},
  and the fact
  that $\lim_{\varepsilon\to 0}
  \int_0^{\abs{\varepsilon}} \frac{1}{\sqrt{z}}\,\d z   
  =0$
  proves that 
  \begin{equation}\label{eq:hZ_conv5}
  \begin{split}
  &\limsup \limits_{n \to \infty} \; \bigg\lvert \bigg\lvert \bigg\lvert \E \bigg[ \int_{t_n}^T  h(r, X^{x_n}_{t_n,r}) Z^{x_n}_{t_n, r}\,\d r \bigg] - 
  \E \bigg[ \int_{t_0}^T  h(r, X^{x_0}_{t_0,r}) Z^{x_0}_{t_0, r}\,\d r \bigg] \bigg\rvert \bigg\rvert \bigg\rvert \\
    &\leq \limsup \limits_{n \to \infty} \bigg( 
    \frac{cM}{\inf_{t\in[0,T]}\inf_{x\in O} V(t,x)} 
   \Big[\sup_{u\in K_q}\sup_{y\in O_q}
   V(u,y)\Big] 
   \int_{0}^{\abs{t_0-t_n}} 
  \frac{1}{\sqrt{z}}\,\d z \\
  & \quad + \E\bigg[\int_{\max\{t_0,t_n\}}^T 
    \lvert\lvert\lvert h(r, X^{x_n}_{t_n,r})
    Z^{x_n}_{t_n,r}
    -h(r, X^{x_0}_{t_0,r})
    Z^{x_0}_{t_0,r}
    \rvert\rvert\rvert\,\d r\bigg]\bigg)
  =0.
\end{split}
\end{equation}
  This and \eqref{eq:g_h_conv}
  (applied with $h\curvearrowleft ([0,T)\times O \ni (t,x)\mapsto g(x)\in \R)$)
  show that
  \begin{equation}\label{eq:gZ_hZ_conv}
  \begin{split}
    \limsup_{n\to\infty}
    \bigg( &\E\big[\lvert\lvert\lvert g(X^{x_n}_{t_n,T})Z^{x_n}_{t_n,T}
    -g(X^{x_0}_{t_0,T})Z^{x_0}_{t_0,T}
    \rvert\rvert\rvert \big]\\
    & +\bigg\lvert\bigg\lvert\bigg\lvert\E\bigg[\int_{t_n}^T 
    h(r,X^{x_n}_{t_n,r})
    Z^{x_n}_{t_n,r}\,\d r\bigg]
     -\E\bigg[\int_{t_0}^T
    h(r,X^{x_0}_{t_0,r})
    Z^{x_0}_{t_0,r}
    \,\d r\bigg]
    \bigg\rvert\bigg\rvert\bigg\rvert
    \bigg)
    =0.
  \end{split}
  \end{equation}
  This proves that \eqref{eq:cont1}
  is continuous.
  The proof of Lemma~\ref{lem:continuity1}
  is thus complete. 
\end{proof}

The next lemma extends 
the results of Lemma~\ref{lem:continuity1}
by demonstrating well-definedness
and continuity of SFPE solutions 
for continuous terminal conditions and 
nonlinearities.
Lemma~\ref{lem:continuity2}
is a generalization of
the results in \cite[Lemma 2.6]{BGHJ2019}

\begin{lemma}\label{lem:continuity2}
  Assume Setting~\ref{setting:sfpe},
  assume $\inf_{t\in[0,T]}\inf_{x\in O}
V(t,x)>0$,
  assume for all
  $\varepsilon \in (0, \infty)$, 
  $t \in [0,T)$, $s\in(t,T]$, 
  $x\in O$ that
  \begin{equation}
    \limsup\nolimits_{[0,s)\times O\ni (u,y)\to(t,x)}
    \Big[ 
  \mathbb{P} \big( \lvert \lvert X^{y}_{u, s} 
    - X^{x}_{t, s} \rvert \rvert 
    > \varepsilon \big) 
    +\E \big[ \lvert \lvert \lvert Z^{y}_{u,s} 
     - Z^{x}_{t,s} \rvert \rvert \rvert \big] \Big] 
     = 0,
  \end{equation}
  let $g \in C(O, \R)$, 
 $h \in C([0,T) \times O, \R)$,
  and $v\colon[0,T) \times O \to \R^m$
  satisfy 
  $\inf_{r \in (0, \infty)} 
  [\sup_{t \in [0,T)\setminus K_r}$ 
  $\sup_{x \in O \setminus O_r} ( \frac{\lvert g(x) \rvert}{V(T,x)} 
  + \frac{\lvert h(t,x) \rvert}{V(t,x)}
  \sqrt{T-t} ) ]
  =0 $
  and for all 
  $t \in [0,T)$, $x \in O$
  that 
  \begin{equation}
    v(t,x) = \E \left[ g(X^x_{t,T})Z^x_{t,T} 
    + \int_t^T h(r, X^x_{t,r}) Z^x_{t,r} \,\d r \right]
  \end{equation}
  (cf. Lemma~\ref{lem:integrability}).
  Then
  \begin{enumerate}[label=(\roman*)]
  \item\label{it:cont2_1}
  it holds that
  $v \in C([0,T) \times O, \R^m)$
  and
  \item\label{it:cont2_2}
  if - in addition to the
  above assumptions -
  it holds for all
  $u\in (0,\infty)$ that
  \begin{equation}
    \limsup_{r \to\infty} \Bigg[ \sup_{t \in [0,T)\setminus K_r} \sup_{x \in O \setminus O_r} \Bigg(
    \frac{\E\Big[\int_t^T \mathbbm{1}_{O_u}(X^x_{t,s})\,
    \lvert\lvert\lvert Z^x_{t,s}\rvert\rvert\rvert
    (\d s + \delta_T(\d s))\Big]
    \sqrt{T-t}}{V(t,x)}\Bigg) \Bigg]
    =0,
  \end{equation}  
  then it holds that
  \begin{equation}
    \lim \limits_{r \to \infty} \sup \limits_{t \in [0,T)\setminus K_r} 
    \sup \limits_{x \in O \setminus O_r} \left( \frac{\lvert \lvert \lvert v(t,x) \rvert \rvert \rvert}{V(t,x)}  \, \sqrt{T-t} \right) 
    = 0.
  \end{equation}
  \end{enumerate}
\end{lemma}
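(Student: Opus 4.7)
The plan is to reduce the problem to the case of bounded terminal condition and nonlinearity via the approximation machinery of Corollary~\ref{cor:h_approximation} and Lemma~\ref{lem:approximation_result1}, and then apply Lemma~\ref{lem:continuity1}. First I would invoke Corollary~\ref{cor:h_approximation} to produce compactly supported $h_n \in C([0,T)\times O, \R)$, $n\in \N$, with $\limsup_{n\to\infty} [\sup_{(t,x)\in[0,T)\times O}(\lvert h_n(t,x)-h(t,x)\rvert\sqrt{T-t}/V(t,x))] = 0$, and by a parallel Urysohn-based construction (a straightforward adaptation of the proof of Lemma~\ref{lem:h_approximation} to purely spatial functions) obtain compactly supported $g_n \in C(O,\R)$ with $\limsup_{n\to\infty}[\sup_{x\in O}(\lvert g_n(x)-g(x)\rvert/V(T,x))] = 0$. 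For every $n \in \N$ let $u_n \in (0,\infty)$ satisfy that $g_n$ vanishes outside $O_{u_n}$ and $h_n$ vanishes outside $K_{u_n}\times O_{u_n}$, and define $v_n \colon [0,T)\times O \to \R^m$ by the same SFPE as $v$ but with data $(g_n, h_n)$; well-definedness is supplied by Lemma~\ref{lem:integrability}.

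For part~\ref{it:cont2_1}, each $v_n$ is continuous on $[0,T)\times O$ by Lemma~\ref{lem:continuity1} (since $g_n, h_n$ are bounded and continuous, $\inf V > 0$, and the $(X,Z)$-continuity hypothesis of Lemma~\ref{lem:continuity1} coincides with the one assumed here). Lemma~\ref{lem:approximation_result1}\ref{it:app_res4} then yields $\lvert\lvert\lvert v_n(t,x) - v(t,x)\rvert\rvert\rvert\, \sqrt{T-t} \to 0$ uniformly on every compact $\mathcal{K} \subseteq [0,T)\times O$. Since $\sqrt{T-t}$ is bounded below by a positive constant on any such $\mathcal{K}$, this upgrades to $v_n \to v$ uniformly on compacts, from which continuity of $v$ follows.

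For part~\ref{it:cont2_2}, given $\varepsilon \in (0,\infty)$ I would use Lemma~\ref{lem:approximation_result1}\ref{it:app_res3} to pick $n \in \N$ so large that
\[ \sup_{(t,x)\in[0,T)\times O} \frac{\lvert\lvert\lvert v(t,x) - v_n(t,x)\rvert\rvert\rvert\, \sqrt{T-t}}{V(t,x)} \leq \frac{\varepsilon}{2}. \]
For this fixed $n$, the compact support of $g_n$ and $h_n$ yields, with $M_n := \sup_{x\in O} \lvert g_n(x)\rvert + \sup_{(t,x)\in[0,T)\times O} \lvert h_n(t,x)\rvert \in [0,\infty)$, the pointwise bound
\[ \lvert\lvert\lvert v_n(t,x) \rvert\rvert\rvert \leq M_n\, \E\!\left[ \int_t^T \mathbbm{1}_{O_{u_n}}(X^x_{t,s})\, \lvert\lvert\lvert Z^x_{t,s}\rvert\rvert\rvert\, (\d s + \delta_T(\d s)) \right]. \]
The additional hypothesis of part~\ref{it:cont2_2}, applied with $u \curvearrowleft u_n$, then furnishes $r_n \in (0,\infty)$ such that for all $(t,x) \in ([0,T)\setminus K_{r_n})\times(O\setminus O_{r_n})$ one has $\lvert\lvert\lvert v_n(t,x)\rvert\rvert\rvert\, \sqrt{T-t}/V(t,x) \leq \varepsilon/2$, and the triangle inequality closes the argument.

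The main subtlety is the order of quantifiers in part~\ref{it:cont2_2}: the constant $M_n$ depends on $n$ and could grow as $n\to\infty$, so one must fix $n$ \emph{first} (from $\varepsilon$) before invoking the new hypothesis to produce $r_n$ (depending on both $\varepsilon$ and $u_n$). A minor side task is the compactly supported approximation of $g$, which requires a mild adaptation of the proof of Lemma~\ref{lem:h_approximation} (replacing $[0,T)\times O$ by $O$ and $\sqrt{T-t}/V(t,x)$ by $1/V(T,x)$), though it follows the same Urysohn template.
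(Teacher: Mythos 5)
Your proposal is correct and follows essentially the same route as the paper's proof: compactly supported approximation of $(g,h)$ via Urysohn (the paper invokes Corollary~\ref{cor:h_approximation} for both $g$ and $h$, glossing over the purely spatial adaptation you make explicit), continuity of the $v_n$ via Lemma~\ref{lem:continuity1}, locally uniform convergence via item~\ref{it:app_res4} of Lemma~\ref{lem:approximation_result1} for part~\ref{it:cont2_1}, and for part~\ref{it:cont2_2} the bound on $\lvert\lvert\lvert v_n\rvert\rvert\rvert$ from compact support combined with the additional hypothesis and item~\ref{it:app_res3} of Lemma~\ref{lem:approximation_result1}. Your $\varepsilon/2$-split in part~\ref{it:cont2_2} is the pointwise version of the paper's $\inf_n/\limsup_r$ chain, and your remark on the quantifier ordering (fix $n$ first, then find $r_n$) correctly identifies the key structural point of that step.
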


\begin{proof}[Proof of Lemma \ref{lem:continuity2}]
  First observe that
  Corollary~\ref{cor:h_approximation}
  demonstrates that there exists
  compactly supported
  $g_n\in C(O, \R)$, $n \in \N$,
  and $h_n\in C([0,T) \times O, \R)$, 
  $n \in \N$,
  which satisfy
  \begin{equation}\label{eq:con2_gh}
     \limsup \limits_{n \to \infty} \left[ 
     \sup \limits_{t \in [0,T)} \sup \limits_{x \in O} \left( \frac{\lvert g_n(x)-g(x) \rvert}{V(T,x)} 
     + \frac{\lvert h_n(t,x) - h(t,x) \rvert}{V(t,x)}\sqrt{T-t} \right) \right]
     = 0.
  \end{equation}
  Let 
  $v_n\colon [0,T) \times O \to \R^m$
  satisfy for all
  $n \in \N$, $t \in [0,T)$, $x \in O$ 
  that
  \begin{equation}\label{eq:v_n}
    v_n(t,x) = \E \left[ g_n(X^x_{t,T})Z^x_{t,T} 
    + \int_t^T h_n(s, X^x_{t,s})Z^x_{t,s}\,\d s \right]
  \end{equation}
  (cf.\ Lemma~\ref{lem:integrability}).
  Observe that Lemma~\ref{lem:continuity1}
  and the fact that for all $n\in\N$
  it holds that $g_n$ and $h_n$
  are compactly supported
  and continuous
  show that for all
  $n \in \N$ it holds that
  $v_n\colon [0,T) \times O \to \R^m$
  is continuous.
  Furthermore, note that the fact that
  $g_n\colon O \to \R$, $n \in \N$,
  and $h_n\colon[0,T) \times O \to \R$, 
  $n \in \N$,
  are compactly supported 
  ensures that 
  for every $n \in \N$ there exists
  $r_n \in (0, \infty)$ 
  which satisfies that for all
  $r\in[r_n,\infty)$,
  $t \in [0,T)\setminus K_r$, 
  $x \in O \setminus O_r$
  it holds that
  $g_n(x)=0=h_n(t,x)$.
  This demonstrates that for all
  $n\in\N$ it holds that
  \begin{equation}
    \inf_{r\in(0,\infty)} \bigg[ \sup_{t\in [0,T)\setminus K_r} 
    \sup_{x\in O\setminus O_r}
    \Big( \frac{\abs{g_n(x)}}{V(T,x)}
    +\frac{\abs{h_n(t,x)}}{V(t,x)}\sqrt{T-t}
    \Big)\bigg]
    =0.
  \end{equation}
  Item~\ref{it:app_res4} of 
  Lemma~\ref{lem:approximation_result1},
  \eqref{eq:con2_gh}, 
  and the fact that for all
  $n \in \N$ it holds that
  $v_n\colon [0,T) \times O \to \R^m$
  is continuous
  therefore imply that
  $v \colon[0,T) \times O \to \R^m$
  is continuous.
  This etablishes item~\ref{it:cont2_1}.
  To prove item~\ref{it:cont2_2} 
  assume for all $u\in (0,\infty)$ that
  \begin{equation}\label{eq:cont2_2ass}
    \limsup_{r\to\infty} \Bigg[ \sup_{t \in [0,T)\setminus K_r} \sup_{x \in O \setminus O_r} \Bigg(
    \frac{\E\Big[\int_t^T \mathbbm{1}_{O_u}(X^x_{t,s})\,
    \lvert\lvert\lvert Z^x_{t,s}\rvert\rvert\rvert
    (\d s + \delta_T(\d s))\Big]
    \sqrt{T-t}}{V(t,x)}\Bigg) \Bigg]
    =0.
  \end{equation}  
  This
  and the fact that
  for all $n\in\N$,
  $x \in O \setminus O_{r_n}$
  it holds that
  $g_n(x)=0$
  ensure 
  for all $n\in\N$
  that
  \begin{equation}\label{eq:cont2_2_g}
  \begin{split}
    &\limsup_{r\to\infty} \Bigg[ \sup_{t \in [0,T)\setminus K_r} \sup_{x \in O \setminus O_r} \Bigg(
    \frac{\E [\abs{g_n(X^x_{t,T})}\, 
    \lvert\lvert\lvert Z^x_{t,T}\rvert\rvert\rvert ]
    \sqrt{T-t}}{V(t,x)}\Bigg) \Bigg]\\
    &\leq 
    \limsup_{r\to\infty} 
    \Bigg[ \Big[\sup_{y\in O_{r_n}} 
    \,\abs{g_n(y)}\Big]\bigg[ \sup_{t \in [0,T)\setminus K_r} \sup_{x \in O \setminus O_r} \Bigg(
    \frac{\E [ \mathbbm{1}_{O_{r_n}}(X^x_{t,T}) \lvert\lvert\lvert
     Z^x_{t,T}\rvert\rvert\rvert ]
    \sqrt{T-t}}{V(t,x)}\Bigg) \bigg]
    \Bigg]
    =0.
  \end{split}
\end{equation}    
  Moreover, note that
  \eqref{eq:cont2_2ass}
  and the fact that
  for all $n\in\N$, 
  $t \in [0,T)\setminus K_{r_n}$, 
  $x \in O \setminus O_{r_n}$
  it holds that
  $h_n(t,x)=0$
  demonstrate that for all $n\in\N$
  it holds that
  \begin{equation}\label{eq:cont2_2_h}
  \begin{split}
    &\limsup_{r\to\infty} \Bigg[ \sup_{t \in [0,T)\setminus K_r} \sup_{x \in O \setminus O_r} \Bigg(
    \frac{\E \Big[\int_t^T
     \,\abs{h_n(s, X^x_{t,s})}\, 
    \lvert\lvert\lvert Z^x_{t,s}\rvert\rvert\rvert \,\d s\Big]
    \sqrt{T-t}}{V(t,x)}\Bigg) \Bigg]\\
    &\leq 
    \limsup_{r\to\infty}\Bigg[
    \Big[\sup_{u\in K_{r_n}}\sup_{y\in O_{r_n}} \abs{h_n(u,y)}\Big]\\
    &\hspace{6em}\cdot \bigg[ \sup_{t \in [0,T)\setminus K_r} \sup_{x \in O \setminus O_r} \Bigg(
    \frac{\E \Big[ \int_t^T 
    \mathbbm{1}_{O_{r_n}}(X^x_{t,s})\,
    \lvert\lvert\lvert
     Z^x_{t,s}\rvert\rvert\rvert\,\d s 
     \Big]
    \sqrt{T-t}}{V(t,x)}\Bigg) \bigg]\Bigg]
    =0.
  \end{split}
\end{equation}    
  Combining this, \eqref{eq:v_n}, 
  and \eqref{eq:cont2_2_g}
  shows that for all
  $n \in \N$ it holds that
  \begin{equation}
  \begin{split}
    \limsup \limits_{r \to \infty} \left[ \sup \limits_{t \in [0,T)\setminus K_r} \sup \limits_{x \in O \setminus O_r} \left( \frac{\lvert \lvert \lvert v_n(t,x) \rvert \rvert \rvert}{V(t,x)}  \, \sqrt{T-t} \right) \right] 
    = 0.
    \end{split}
    \end{equation}  
  Item~\ref{it:app_res3} of 
  Lemma~\ref{lem:approximation_result1}
  therefore proves that 
  \begin{equation}
  \begin{split}
    &\limsup \limits_{r \to \infty} \left[ \sup \limits_{t \in [0,T)\setminus K_r} \sup \limits_{x \in O \setminus O_r} \left( \frac{\lvert \lvert \lvert v(t,x) \rvert \rvert \rvert}{V(t,x)}  \, \sqrt{T-t} \right) \right] \\
    &\leq \inf \limits_{n \in \N} \left( \limsup \limits_{r \to \infty} \left[ \sup \limits_{t \in [0,T)\setminus K_r} \sup \limits_{x \in O \setminus O_r} \left( \frac{\lvert \lvert \lvert v(t,x) - v_n(t,x) \rvert \rvert \rvert + \lvert \lvert \lvert v_n(t,x) \rvert \rvert \rvert}{V(t,x)}  \, \sqrt{T-t} \right) \right] \right) \\
    &= \inf \limits_{n \in \N} \left( \limsup \limits_{r \to \infty} \left[ \sup \limits_{t \in [0,T)\setminus K_r} \sup \limits_{x \in O \setminus O_r} \left( \frac{\lvert \lvert \lvert v(t,x) - v_n(t,x) \rvert \rvert \rvert}{V(t,x)}  \, \sqrt{T-t} \right) \right] \right) \\
    &\leq \inf \limits_{n \in \N} \left( \sup \limits_{t \in [0,T)} \sup \limits_{x \in O} \left( \frac{\lvert \lvert \lvert v(t,x) - v_n(t,x) \rvert \rvert \rvert}{V(t,x)}  \, \sqrt{T-t} \right) \right) \\
    &\leq \limsup \limits_{n \to \infty} \left( \sup \limits_{t \in [0,T)} \sup \limits_{x \in O} \left( \frac{\lvert \lvert \lvert v(t,x) - v_n(t,x) \rvert \rvert \rvert}{V(t,x)}  \, \sqrt{T-t} \right) \right) 
    = 0.
  \end{split}
  \end{equation}
  This establishes item~\ref{it:cont2_2}.
  The proof of 
  Lemma \ref{lem:continuity2}
  is thus complete.
\end{proof}

\subsection{Contractivity of SFPEs}
\label{subsec:contractivity}

The following lemma 
derives a contractivity
property for SFPEs.
Lemma~\ref{lem:contraction} 
is a generalization of the
results in \cite[Lemma 2.8]{BGHJ2019}.

\begin{lemma}\label{lem:contraction}
  Assume Setting~\ref{setting:sfpe},
  let $L\in(0,\infty)$,
  let $f\colon [0,T) \times O \times \R^m \to \mathbb{R}$ be measurable,
  assume for all $t \in [0,T)$, $x \in O$, $y,z \in \R^m$ that
  $|f(t,x,y)-f(t,x,z)| \leq 
  L \lvert\lvert\lvert y-z \rvert\rvert\rvert$,
  let $v,w \colon [0,T) \times O \to \R^m$ 
  be measurable,
  and assume that
  \begin{equation}\label{eq:contraction1}
    \sup \limits_{t \in [0,T)} \sup \limits_{x \in O}
\left[ \frac{\lvert \lvert \lvert v(t,x)\rvert \rvert \rvert 
    + \lvert \lvert \lvert w(t,x)\rvert \rvert \rvert}{V(t,x)}  \, \sqrt{T-t} \right] 
    < \infty.
  \end{equation}
  Then it holds for all 
  $\lambda \in (0, \infty)$, 
  $t \in [0,T)$, 
  $x \in O$ that
  \begin{equation}\label{eq:contraction2}
  \begin{split}
    &\E \left[ \int_t^T \abs{ f(r, X^x_{t,r}, v(r, X^x_{t,r}))
    -f(r, X^x_{t,r}, w(r, X^x_{t,r}))} \, \lvert \lvert \lvert Z^x_{t,r} \rvert \rvert \rvert \,\d r \right] \\
    &\leq 
    cL\sqrt{\frac{\pi^3}{4\lambda(T-t)}}    
    V(t,x) e^{-\lambda t}
    \left[ \sup \limits_{s \in [0,T)} \sup \limits_{y \in O} \left( \frac{e^{\lambda s} \lvert \lvert \lvert v(s,y) - w(s,y) \rvert \rvert \rvert}{V(s,y)} \sqrt{T-s} \right) \right].
  \end{split}
  \end{equation}
\end{lemma}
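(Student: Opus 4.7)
The plan is to reduce the estimate to a single one-dimensional deterministic integral which can be controlled via item~\ref{it:int2} of Lemma~\ref{lem:integral}. To prepare, I would abbreviate
\begin{equation}
  A := \sup_{s\in [0,T)}\sup_{y\in O}\Big(\frac{e^{\lambda s}\,\lvert\lvert\lvert v(s,y)-w(s,y)\rvert\rvert\rvert}{V(s,y)}\sqrt{T-s}\Big),
\end{equation}
and note that by \eqref{eq:contraction1} together with the fact that $V$ is strictly positive, $A\in[0,\infty)$ is finite. The role of $A$ is that it lets us dominate the random quantity $\lvert\lvert\lvert v(r,X^x_{t,r})-w(r,X^x_{t,r})\rvert\rvert\rvert$ by the product of $A$ and the deterministic weight $\frac{V(r,X^x_{t,r})\,e^{-\lambda r}}{\sqrt{T-r}}$.

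First I would apply the Lipschitz hypothesis on $f$ pointwise to obtain, for all $r\in(t,T)$, the bound
\begin{equation}
  \big\lvert f(r,X^x_{t,r},v(r,X^x_{t,r}))-f(r,X^x_{t,r},w(r,X^x_{t,r}))\big\rvert
  \leq L\,\lvert\lvert\lvert v(r,X^x_{t,r})-w(r,X^x_{t,r})\rvert\rvert\rvert.
\end{equation}
Multiplying and dividing by $V(r,X^x_{t,r})\,e^{-\lambda r}/\sqrt{T-r}$, the definition of $A$ yields
\begin{equation}
  L\,\lvert\lvert\lvert v(r,X^x_{t,r})-w(r,X^x_{t,r})\rvert\rvert\rvert
  \leq \frac{L\,A\,V(r,X^x_{t,r})\,e^{-\lambda r}}{\sqrt{T-r}}.
\end{equation}

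Next I would multiply by $\lvert\lvert\lvert Z^x_{t,r}\rvert\rvert\rvert$, integrate over $r\in[t,T]$, take expectation, and invoke Fubini's theorem (legitimate by non-negativity) to swap expectation and integral. This gives
\begin{equation}
  \E\!\left[\int_t^T\!\big\lvert f(r,X^x_{t,r},v(r,X^x_{t,r}))-f(r,X^x_{t,r},w(r,X^x_{t,r}))\big\rvert\,\lvert\lvert\lvert Z^x_{t,r}\rvert\rvert\rvert \,\d r\right]
  \leq L\,A \int_t^T \frac{e^{-\lambda r}}{\sqrt{T-r}}\,\E\!\left[V(r,X^x_{t,r})\,\lvert\lvert\lvert Z^x_{t,r}\rvert\rvert\rvert\right]\d r.
\end{equation}
Then the hypothesis from Setting~\ref{setting:sfpe} that $\E[V(r,X^x_{t,r})\,\lvert\lvert\lvert Z^x_{t,r}\rvert\rvert\rvert]\leq \frac{c}{\sqrt{r-t}}V(t,x)$ converts this into the deterministic integral
\begin{equation}
  c\,L\,A\,V(t,x)\int_t^T \frac{e^{-\lambda r}}{\sqrt{(T-r)(r-t)}}\,\d r.
\end{equation}

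Finally I would apply item~\ref{it:int2} of Lemma~\ref{lem:integral} with $a\curvearrowleft t$ and $b\curvearrowleft T$, yielding
\begin{equation}
  \int_t^T\frac{e^{-\lambda r}}{\sqrt{(T-r)(r-t)}}\,\d r
  \leq \sqrt{\frac{\pi^3}{4\lambda(T-t)}}\,e^{-\lambda t},
\end{equation}
and inserting the definition of $A$ produces exactly the right-hand side of \eqref{eq:contraction2}. No step here looks delicate: the only real ingredients are the algebraic manipulation with the weight $V(r,\cdot)\,e^{-\lambda r}/\sqrt{T-r}$ (which is entirely set up by the definition of $A$) and the sharp bound on the arcsine-type integral from Lemma~\ref{lem:integral}. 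The finiteness condition \eqref{eq:contraction1} is used only to guarantee that $A<\infty$ so that the estimate is non-vacuous.
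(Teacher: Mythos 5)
Your proof is correct and follows essentially the same route as the paper's: apply the Lipschitz bound, multiply and divide by the weight $V(r,X^x_{t,r})e^{-\lambda r}/\sqrt{T-r}$ to factor out the weighted supremum, use the $\E[V\,\lvert\lvert\lvert Z\rvert\rvert\rvert]$ moment bound from Setting~\ref{setting:sfpe}, and close with item~\ref{it:int2} of Lemma~\ref{lem:integral}. The only cosmetic difference is that you name the supremum $A$ up front rather than carrying it along inline.
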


\begin{proof}[Proof of Lemma \ref{lem:contraction}]
  First note that the assumption that 
  $f$, $v$, and $w$
  are measurable and the fact that 
  for all $t \in [0,T]$, $x \in O$ 
  it holds that
  $X^x_{t}$
  and
  $Z^x_{t}$
  are measurable
  show that for all 
  $t \in [0,T)$, $x \in O$ 
  it holds that 
  $(t,T] \times \Omega \ni (s, \omega) \mapsto
\left| f(s, X^x_{t,s}(\omega), v(s, X^x_{t,s}(\omega))) 
- f(s, X^x_{t,s}(\omega), w(s, X^x_{t,s}(\omega))) \right| \, \lvert \lvert \lvert Z^x_{t,s} \rvert \rvert \rvert   \in \R$
  is measurable.
  Fubini's theorem and 
  the assumption that for all
  $t \in [0,T)$, $x \in O$, $y,z \in \R^m$
  it holds that
  $| f(t,x,y) - f(t,x,z) |$ 
  $\leq L \lvert \lvert \lvert y-z\rvert \rvert \rvert$
  therefore imply that for all
  $\lambda \in (0, \infty)$, $t \in [0,T)$,
  $x \in O$ 
  it holds that
  \begin{equation}\label{eq:contradiction1}
  \begin{split}
    & \E \left[ \int_t^T |f(r, X^x_{t,r}, v(r, X^x_{t,r})) -  f(r, X^x_{t,r}, w(r, X^x_{t,r}))| 
    \,\lvert \lvert \lvert Z^x_{t,r}\rvert \rvert \rvert   \,\d r \right] \\
    & \leq \E \left[ \int_t^T L \lvert \lvert \lvert v(r, X^x_{t,r})-w(r, X^x_{t,r}) \rvert \rvert \rvert \, \lvert \lvert \lvert Z^x_{t,r}\rvert \rvert \rvert   \,\d r \right] \\
    & = L \int_t^T \E \left[ \frac{e^{\lambda r} \lvert \lvert \lvert v(r, X^x_{t,r})-w(r, X^x_{t,r})\rvert \rvert \rvert}{V(r, X^x_{t,r})} \sqrt{T-r}  \, 
    \frac{V(r, X^x_{t,r})}{\sqrt{T-r}} \lvert \lvert \lvert Z^x_{t,r}\rvert \rvert \rvert \right] e^{-\lambda r} \,\d r \\
    & \leq L  \left[ \sup \limits_{s \in [0,T)} \sup \limits_{y \in O} \left( \frac{e^{\lambda s} \lvert \lvert \lvert v(s,y)-w(s,y) \rvert \rvert \rvert}{V(s,y)} \sqrt{T-s}  \right) \right]
    \int_t^T
    \frac{\E \left[ V(r, X^x_{t,r}) \lvert \lvert \lvert Z^x_{t,r}\rvert \rvert \rvert \right]}{\sqrt{T-r}}  
     e^{-\lambda r} \,\d r.
    \end{split}
    \end{equation}
  This, the fact that
  for all 
  $t \in [0,T)$, 
  $r \in (t,T]$, $x \in O$
  it holds that 
  $\E [ V(r, X^x_{t,r}) \lvert \lvert \lvert Z^x_{t,r}\rvert \rvert \rvert ] \leq \frac{c}{\sqrt{r-t}} V(t,x)$,
  and
  item~\ref{it:int2} of 
  Lemma~\ref{lem:integral}
  (applied for every $t\in[0,T)$ 
  with $a\curvearrowleft t$,
  $b\curvearrowleft T$
  in the notation of 
  Lemma~\ref{lem:integral})
  ensure that
  for all
  $\lambda \in (0, \infty)$, $t \in [0,T)$,
  $x \in O$ 
  it holds that
  \begin{equation}\label{eq:contradiction2}
    \begin{split}
    & \E \left[ \int_t^T |f(r, X^x_{t,r}, v(r, X^x_{t,r})) -  f(r, X^x_{t,r}, w(r, X^x_{t,r}))| 
    \,\lvert \lvert \lvert Z^x_{t,r}\rvert \rvert \rvert   \,\d r \right] \\
    & \leq L  \left[ \sup \limits_{s \in [0,T)} \sup \limits_{y \in O} \left( \frac{e^{\lambda s} \lvert \lvert \lvert v(s,y)-w(s,y) \rvert \rvert \rvert}{V(s,y)} \sqrt{T-s}  \right) \right]
     \int_t^T\frac{c V(t,x)}{\sqrt{(r-t)(T-r)}}  e^{-\lambda r} \, \d r \\
    &=cL \left[ \sup \limits_{s \in [0,T)} \sup \limits_{y \in O} \left( \frac{e^{\lambda s} \lvert \lvert \lvert v(s,y)-w(s,y) \rvert \rvert \rvert}{V(s,y)} \sqrt{T-s}\right) \right] V(t,x)
     \int_t^{T}
    \frac{1}{\sqrt{(r-t)(T-r)}}\, 
    e^{-\lambda r} \,\d r\\
    &\leq cL\sqrt{\frac{\pi^3}{4\lambda(T-t)}}    
    V(t,x) e^{-\lambda t}
    \left[ \sup \limits_{s \in [0,T)} \sup \limits_{y \in O} \left( \frac{e^{\lambda s} \lvert \lvert \lvert v(s,y)-w(s,y) \rvert \rvert \rvert}{V(s,y)} \sqrt{T-s}\right) \right] .
  \end{split}
  \end{equation}
  The proof of Lemma~\ref{lem:contraction}
  is thus complete.
\end{proof}

  \subsection{Existence and uniqueness of solutions of SFPEs}
\label{subsec:exist_uniq}

In this section we use the results
of Section~\ref{subsec:continuity}-\ref{subsec:contractivity} to obtain
our main result of this section,
Theorem~\ref{thm:ex_cont_solution}.
The following lemma
constructs a vector space of SFPE solutions 
and shows that this space is a Banach space.
Lemma~\ref{lem:banach_space2} generalizes 
a result in the proof of 
\cite[Theorem 2.9]{BGHJ2019}.

\begin{lemma}\label{lem:banach_space2}
  Let $d \in \N$,
  $T \in (0, \infty)$,
  let $\lvert \lvert \cdot \rvert \rvert\colon \R^d \to [0,\infty)$
  be a norm,
  let $O \subseteq \R^d$
  be a non-empty open set,
  for every 
  $r \in (0, \infty)$
  let $K_r\subseteq [0,T)$,
  $O_r \subseteq O$
  satisfy
  $K_r=[0,\max\{T-\frac{1}{r},0\}]$,
  $O_r = \{  x \in O\colon \lvert \lvert x \rvert \rvert \leq r \text{ and } \{ y \in \R^d\colon \lvert \lvert y-x \rvert \rvert 
  <\frac{1}{r} \} \subseteq O \}$,
  let $(B,\lvert\lvert\lvert\cdot\rvert\rvert\rvert)$
  be an $\R$-Banach space,
  let 
  $V \in C([0,T] \times O, (0, \infty))$
  satisfy 
  $\sup_{r \in (0, \infty)} [ \inf_{t \in [0,T)\setminus K_r} \inf_{x \in O \setminus O_r}$ 
  $V(t,x) ] 
  = \infty$,
  let $\mathcal{V}$ satisfy
  \begin{equation}
    \mathcal{V} 
    = \bigg\{ v \in C([0,T) \times O, B)\colon \limsup \limits_{r \to \infty} \bigg[ \sup \limits_{t \in [0,T)\setminus K_r} \sup \limits_{x \in O \setminus O_r} 
    \bigg( \frac{ \lvert \lvert \lvert v(t,x) \rvert \rvert \rvert }{V(t,x)}  \, \sqrt{T-t} \bigg) \bigg] = 0 \bigg\},
  \end{equation}
  and let
  $\lvert \lvert \cdot \rvert \rvert_{\lambda}
  \colon \mathcal{V} \to [0, \infty)$,
  $\lambda \in \R$,
  satisfy for all 
  $\lambda \in \R$, $w \in \mathcal{V}$
  that
  \begin{equation}
    \lvert \lvert w \rvert \rvert_{\lambda} 
    = \sup \limits_{t \in [0,T)} \sup \limits_{x \in O} \left( \frac{e^{\lambda t} \lvert \lvert \lvert w(t,x) \rvert \rvert \rvert} {V(t,x)} \, \sqrt{T-t} \right).
  \end{equation}
  Then
  for all
  $ \lambda \in \R$
  it holds that
  $(\mathcal{V}, \lvert \lvert \cdot \rvert \rvert_{\lambda})$
  is an $\R$-Banach space.
\end{lemma}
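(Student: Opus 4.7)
The plan is to verify in turn that $\mathcal{V}$ is a real vector space, that $\lvert\lvert\cdot\rvert\rvert_\lambda$ is a well-defined finite norm on $\mathcal{V}$, and that $(\mathcal{V}, \lvert\lvert\cdot\rvert\rvert_\lambda)$ is complete. The vector-space structure is immediate: sums and scalar multiples of continuous $B$-valued functions are continuous, and the defining decay condition of $\mathcal{V}$ is preserved by the triangle inequality and absolute homogeneity of $\lvert\lvert\lvert\cdot\rvert\rvert\rvert$. The norm axioms (non-negativity, absolute homogeneity, triangle inequality, definiteness) descend from those of $\lvert\lvert\lvert\cdot\rvert\rvert\rvert$; definiteness uses the strict positivity of $V$, $e^{\lambda t}$, and $\sqrt{T-t}$ on $[0,T)\times O$. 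For finiteness of $\lvert\lvert w\rvert\rvert_\lambda$ I would pick $r_0 \in (0,\infty)$ so large that the decay condition in the definition of $\mathcal{V}$ forces the weighted supremum over $([0,T)\times O)\setminus(K_{r_0}\times O_{r_0})$ to be at most $1$; on the complementary compact set $K_{r_0}\times O_{r_0}$ the continuous function $w$, the continuous strictly positive $V$ (bounded below by a positive constant there), and the factors $e^{\lambda t}$ and $\sqrt{T-t}$ on $[0,T]$ are all bounded.

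For completeness, let $(w_n)_{n\in\N}\subseteq\mathcal{V}$ be Cauchy with respect to $\lvert\lvert\cdot\rvert\rvert_\lambda$. For each fixed $(t,x)\in[0,T)\times O$ the bound
\begin{equation}
  \lvert\lvert\lvert w_n(t,x)-w_m(t,x)\rvert\rvert\rvert
  \leq \lvert\lvert w_n-w_m\rvert\rvert_\lambda\cdot\frac{V(t,x)\,e^{-\lambda t}}{\sqrt{T-t}}
\end{equation}
shows that $(w_n(t,x))_{n\in\N}$ is Cauchy in the Banach space $B$, so by completeness of $B$ it converges to a pointwise limit $w(t,x)\in B$. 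Letting $m\to\infty$ in the inequality $\tfrac{e^{\lambda t}\lvert\lvert\lvert w_n(t,x)-w_m(t,x)\rvert\rvert\rvert}{V(t,x)}\sqrt{T-t}\leq \lvert\lvert w_n-w_m\rvert\rvert_\lambda$ then yields $\lvert\lvert w_n-w\rvert\rvert_\lambda\to 0$ as $n\to\infty$.

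It remains to show $w\in\mathcal{V}$, which is the main obstacle. Any compact $K\subseteq[0,T)\times O$ is contained in some $K_r\times O_r$ (since $K_r\uparrow[0,T)$ and $O_r\uparrow O$), and on such a compact set the weight $V(t,x)\,e^{-\lambda t}/\sqrt{T-t}$ is uniformly bounded. Hence $\lvert\lvert\cdot\rvert\rvert_\lambda$-convergence upgrades to uniform convergence of $(w_n)$ to $w$ on $K$, and since each $w_n$ is continuous it follows that $w$ is continuous on every compact subset of $[0,T)\times O$, hence continuous. For the decay condition, given $\varepsilon\in(0,\infty)$ I would pick $N\in\N$ with $\lvert\lvert w_N-w\rvert\rvert_\lambda<\varepsilon$ and $r_0\in(0,\infty)$ so that $\sup_{t\in[0,T)\setminus K_r}\sup_{x\in O\setminus O_r}\frac{\lvert\lvert\lvert w_N(t,x)\rvert\rvert\rvert}{V(t,x)}\sqrt{T-t}<\varepsilon$ for all $r\geq r_0$; combining these via the triangle inequality and the uniform bound $\sup_{t\in[0,T]}e^{-\lambda t}\leq e^{\lvert\lambda\rvert T}$ yields $\limsup_{r\to\infty}[\sup_{t\in[0,T)\setminus K_r}\sup_{x\in O\setminus O_r}(\lvert\lvert\lvert w(t,x)\rvert\rvert\rvert/V(t,x))\sqrt{T-t}]\leq \varepsilon(1+e^{\lvert\lambda\rvert T})$, whence the decay condition for $w$ follows by the arbitrariness of $\varepsilon$. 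The subtlety in this last step is that one must simultaneously exploit the boundedness of $V\,e^{-\lambda t}/\sqrt{T-t}$ on compacta (to convert norm convergence into continuity of the limit) and the decay of the approximant $w_N$ away from compacta together with the uniform-in-$t$ control on $e^{-\lambda t}$ (to transport the decay property through the triangle inequality to $w$).
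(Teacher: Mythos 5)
Your proof is correct but takes a more direct route than the paper. The paper introduces two auxiliary spaces of continuous $B$-valued functions on $[0,T)\times O$: the space $\mathcal{W}_1$ of those with finite weighted supremum $\sup_{t,x}\big(\lvert\lvert\lvert w(t,x)\rvert\rvert\rvert\sqrt{T-t}\big)$, and the subspace $\mathcal{W}_2$ of those in $\mathcal{W}_1$ that in addition satisfy the weighted vanishing-at-infinity condition. It proves $\mathcal{W}_1$ is complete, shows $\mathcal{W}_2$ is closed in $\mathcal{W}_1$, observes that $v\mapsto v/V$ is an isometric bijection from $(\mathcal{V},\lvert\lvert\cdot\rvert\rvert_0)$ onto $(\mathcal{W}_2,\lvert\lvert\cdot\rvert\rvert_{\mathcal{W}_2})$, and finally transfers the conclusion from $\lambda=0$ to general $\lambda$ via the two-sided estimate $\lvert\lvert v\rvert\rvert_\Lambda\leq\lvert\lvert v\rvert\rvert_\lambda\leq e^{(\lambda-\Lambda)T}\lvert\lvert v\rvert\rvert_\Lambda$ for $\lambda\geq\Lambda$. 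You instead work directly with $\lvert\lvert\cdot\rvert\rvert_\lambda$ on $\mathcal{V}$ for an arbitrary fixed $\lambda$: a Cauchy sequence converges pointwise in $B$, the limit is attained in the norm by passing to the limit in the Cauchy estimate, continuity of the limit follows from uniform convergence on the exhausting compacta $K_r\times O_r$ (where the reciprocal weight $V(t,x)e^{-\lambda t}/\sqrt{T-t}$ is bounded), and the vanishing-at-infinity property is transported by the triangle inequality against a near approximant together with $\sup_{t\in[0,T]}e^{-\lambda t}\leq e^{\lvert\lambda\rvert T}$. The ingredients are the same — boundedness of the weight on each $K_r\times O_r$ and stability of the decay condition under uniform weighted convergence — but your packaging is shorter, avoids the detour through $\mathcal{W}_1,\mathcal{W}_2$ and the norm-equivalence step, and shows more explicitly where each hypothesis on $V$ enters. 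One interpretational point, shared with the paper's own argument: in the finiteness step one needs that the decay condition together with boundedness on $K_{r_0}\times O_{r_0}$ covers all of $[0,T)\times O$; the decay condition as written controls the supremum over the Cartesian product $([0,T)\setminus K_r)\times(O\setminus O_r)$, whereas what the argument needs (and you state) is control over the set complement $([0,T)\times O)\setminus(K_{r_0}\times O_{r_0})$. This is evidently the intended reading throughout the paper, and your proof is consistent with it, but it is worth being aware that the two sets differ.
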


\begin{proof}[Proof of Lemma \ref{lem:banach_space2}]
  Throughout this proof
  let $\mathcal{W}_1, \mathcal{W}_2$ 
  satisfy that
  \begin{equation}
    \mathcal{W}_1 
    = \left\{ v \in C([0,T) \times O, B)\colon \sup \nolimits_{t \in [0,T)} 
    \sup \nolimits_{x \in O} \, \left( \lvert \lvert \lvert v(t,x) \rvert \rvert \rvert  \, \sqrt{T-t} \right) < \infty \right\}
  \end{equation}
  and 
  \begin{equation}
     \mathcal{W}_2 
     = \Big\{ v \in C([0,T) \times O, B)\colon 
     \limsup \limits_{r \to \infty} \big[ \sup \limits_{t \in [0,T)\setminus K_r} 
     \sup \limits_{x \in O \setminus O_r} \big( \lvert \lvert \lvert v(t,x) \rvert \rvert \rvert  \, \sqrt{T-t} \big) \big] = 0  \Big\},
  \end{equation}
  and let
  $\lvert \lvert \cdot \rvert \rvert_{\mathcal{W}_i}\colon \mathcal{W}_i \to [0, \infty)$,
  $i \in \{ 1, 2\}$,
  satisfy for all 
  $i \in \{ 1,2 \}$, $w \in \mathcal{W}_i$
  that
  \begin{equation}
     \lvert \lvert w \rvert \rvert_{\mathcal{W}_i} 
     = \sup \nolimits_{t \in [0,T)} \sup \nolimits_{x \in O} \,  \left( \lvert \lvert \lvert w(t,x) \rvert \rvert \rvert \, \sqrt{T-t} \right). 
\end{equation}
  First we claim that
  $\lvert \lvert \cdot \rvert \rvert_{\mathcal{W}_2}\colon \mathcal{W}_2 \to [0, \infty)$
  is well-defined.
  For this, let 
  $w\in\mathcal{W}_2$.
  Observe that the assumption
  that $w\in C([0,T)\times O,B)$ 
  and the fact that
  for all $r\in(0,\infty)$
  it holds that $K_r$ and $O_r$
  are compact
  ensures that
  for all $r\in(0,\infty)$
  it holds that
  \begin{equation}
    \sup\nolimits_{t\in K_r}
    \sup\nolimits_{x\in O_r}
    (\lvert\lvert\lvert w(t,x)
    \rvert\rvert\rvert\sqrt{T-t})
    <\infty.
\end{equation}   
  Combining this with the fact
  that
  $\limsup_{r \to \infty}[ \sup_{t \in [0,T)\setminus K_r} \sup_{x \in O \setminus O_r} ( \lvert \lvert \lvert w(t,x) \rvert \rvert \rvert  \, \sqrt{T-t})] = 0 $
  shows that
  \begin{equation}
    \sup\nolimits_{t\in [0,T)}
    \sup\nolimits_{x\in O}
    (\lvert\lvert\lvert w(t,x)
    \rvert\rvert\rvert\sqrt{T-t})
    <\infty.
  \end{equation}
  This demonstrates that
  $\norm{\cdot}_{\mathcal{W}_2}$
  is well-defined.
  In particular, this
  demonstrates that
  $\mathcal{W}_2 \subseteq \mathcal{W}_1$. 
  Next note that 
  $(\mathcal{W}_1, \lvert \lvert \cdot \rvert \rvert_{\mathcal{W}_1})$
  is a normed $\R$-vector space.
  Let $(w_n)_{n\in\N}\subseteq
  \mathcal{W}_1$ be a
  Cauchy sequence.
  Observe that for all 
  $\varepsilon\in (0,\infty)$
  there exists $N_\varepsilon
  \in\N$ which satisfies for all
  $m,n \in\N\cap [N_\varepsilon,\infty)$
  that
  $\lvert \lvert w_n - w_m \rvert \rvert_{\mathcal{W}_1} 
     < \varepsilon$.
  This implies that for all
  $\varepsilon\in (0,\infty)$,
  $k,n \in\N\cap [N_\varepsilon,\infty)$,
  $t\in[0,T)$, $x\in\R^d$
  it holds that  
  \begin{equation}\label{eq:w_cauchy}
     \lvert \lvert \lvert w_n(t,x) - w_k(t,x) \rvert \rvert \rvert \, \sqrt{T-t}
     \leq \lvert \lvert w_n - w_k \rvert \rvert_{\mathcal{W}_1} 
     < \varepsilon.
  \end{equation}
  This ensures
  for all $t \in [0,T)$, $x \in O$
  that
  $(w_n(t,x))_{n \in \mathbb{N}}$
  is a Cauchy sequence
  in $B$.
  The fact that 
  $B$ is complete hence
  demonstrates that for all
  $t \in [0,T)$, $x \in O$
  there exists $w(t,x) \in B$ 
  which satisfies
  $w(t,x) =\lim\nolimits_{n\to\infty} w_n(t,x)$.
  Moreover, note that \eqref{eq:w_cauchy}
  demonstrates that
  for all $\varepsilon\in(0,\infty)$, 
  $n\in\N\cap[N_\varepsilon,\infty)$
  it holds that
  \begin{equation}\label{eq:w_unif_conv}
    \begin{split}
    &\sup_{t\in[0,T)}\sup_{x\in O}
    \big(\lvert \lvert \lvert w(t,x) - w_n(t,x) \rvert \rvert \rvert \, \sqrt{T-t}\big)
    = 
    \sup_{t\in[0,T)}\sup_{x\in O}
    \big(\lvert \lvert \lvert \lim \limits_{m \to \infty} w_m(t,x)-w_n(t,x) 
    \rvert \rvert \rvert \, \sqrt{T-t}\big)\\
    &\leq \limsup \limits_{m \to \infty} 
    \sup_{t\in[0,T)}\sup_{x\in O}
    \big(\lvert \lvert \lvert w_m(t,x) - w_n(t,x) \rvert \rvert \rvert \, \sqrt{T-t}\big)
    < \varepsilon.
\end{split}
\end{equation}
  This and 
  the fact that 
  $w_{N_1}\in\mathcal{W}_1$
  demonstrate that
  \begin{equation}
  \begin{split}
    &\sup_{t\in[0,T)}\sup_{x\in O}
    (\lvert\lvert\lvert w(t,x)\rvert\rvert\rvert \sqrt{T-t})\\
    &\leq \sup_{t\in[0,T)}\sup_{x\in O}
    (\lvert\lvert\lvert w(t,x)-w_{N_1}(t,x)\rvert\rvert\rvert \sqrt{T-t})
    +\sup_{t\in[0,T)}\sup_{x\in O}
    (\lvert\lvert\lvert w_{N_1}(t,x)\rvert\rvert\rvert \sqrt{T-t})\\
    &\leq 1 
    + \norm{w_{N_1}}_{\mathcal{W}_1}
    <\infty.
  \end{split}
  \end{equation}
  Combining this with the fact that
  the uniform limit of continuous functions 
  is continuous
  shows that $w \in \mathcal{W}_1$. 
  This proves that $\mathcal{W}_1$
  is a Banach space.
  Next we prove that 
  $\mathcal{W}_2$
  is a closed subset of
  $(\mathcal{W}_1, \lvert \lvert \cdot \rvert \rvert_{\mathcal{W}_1})$.
  For this, let $(w_n)_{n\in\N}
  \subseteq\mathcal{W}_2$
  and $w\in\mathcal{W}_1$
  satisfy that
  \begin{equation}
  \label{eq:w2_unifconv}
    \lim\nolimits_{n\to\infty}
  \norm{w_n-w}_{\mathcal{W}_1} 
    =0.
  \end{equation}
  For every 
  $\varepsilon\in(0,\infty)$
  let $n_\varepsilon\in\N$ and
  $r_\varepsilon\in(0,\infty)$
  satisfy 
  $\norm{w-w_{n_\varepsilon}}_{\mathcal{W}_1}
  <\frac{\varepsilon}{2}$
  and
  $\sup_{t\in[0,T)\setminus K_{r_\varepsilon}}
  \sup_{x\in O\setminus O_{r_\varepsilon}}$
  $(\lvert\lvert\lvert
  w_{n_\varepsilon}(t,x)\rvert\rvert\rvert \sqrt{T-t})
  <\frac{\varepsilon}{2}$.
  Hence, we obtain that
  \begin{equation}
  \begin{split}
    &\limsup_{r\to\infty}[\sup_{t\in[0,T)\setminus K_r}\sup_{x\in O\setminus O_r}\lvert\lvert\lvert w(t,x)\rvert\rvert\rvert \sqrt{T-t})]
    \leq \limsup_{\varepsilon\to 0}[\sup_{t\in[0,T)\setminus K_{r_\varepsilon}}
  \sup_{x\in O\setminus O_{r_\varepsilon}} (\lvert\lvert\lvert
  w(t,x)\rvert\rvert\rvert \sqrt{T-t})]\\
  &\leq \limsup_{\varepsilon \to 0}
  \varepsilon
  =0.
    \end{split}
\end{equation}    
  This implies that
  $w\in\mathcal{W}_2$.
  Combining this with the fact
  that $\mathcal{W}_2\subseteq
  \mathcal{W}_1$
  demonstrates that
  $\mathcal{W}_2$
  is a complete subset of
  $(\mathcal{W}_1, \lvert \lvert \cdot \rvert \rvert_{\mathcal{W}_1})$.
  Next note that
  for all 
  $\lambda \in \R$
  it holds that
  $(\mathcal{V}, \lvert \lvert \cdot \rvert \rvert_{\lambda})$
  is an 
  $\R$-vector space.
  We claim that
  $(\mathcal{V}, \lvert \lvert \cdot \rvert \rvert_{0})$
  is an 
  $\R$-Banach space.
  For this, let
  $v_n \in \mathcal{V}$, $n \in \N$,
  satisfy
  \begin{equation}
    \limsup\nolimits_{n \to \infty} 
    [ \sup\nolimits_{k,l\in \N\cap[n,\infty)} \lvert \lvert v_k - v_l \rvert \rvert_0 = 0 ].
  \end{equation}
  This implies that 
  $\frac{v_n}{V}\colon[0,T) \times O \to \R$,
  $n \in \N$,
  is a Cauchy sequence in 
  $(\mathcal{W}_2, \lvert \lvert \cdot \rvert \rvert_{\mathcal{W}_2})$.
  The fact that 
  $(\mathcal{W}_2, \lvert \lvert \cdot \rvert \rvert_{\mathcal{W}_2})$
  is a Banach space
  therefore implies that 
  there exists a unique
  $\phi \in \mathcal{W}_2$
  which satisfies
  \begin{equation}
    \limsup_{n \to \infty}
    \normmm{\frac{v_n}{V}-\phi}_{\mathcal{W}_2}
    = 0.
  \end{equation}
  Observe that 
  $\phi V = \left( [0,T) \times O \ni (t,x) \mapsto \phi(t,x) V(t,x) \in \R^m \right) \in \mathcal{V}$
  and
  \begin{equation}
    \limsup \limits_{n \to \infty} \lvert \lvert v_n - \phi V \rvert \rvert_0
    =\limsup_{n\to\infty}
    \normmm{\frac{v_n}{V}-\phi}_{\mathcal{W}_2}
    = 0. 
  \end{equation}
  This proves that 
  $(\mathcal{V}, \lvert \lvert \cdot \rvert \rvert_{0})$
  is an 
  $\R$-Banach space.
  The fact that for all
  $\Lambda \in \R$, $\lambda \in [\Lambda, \infty)$, $v \in \mathcal{V}$,
  it holds that
  \begin{equation}
    \lvert \lvert v \rvert \rvert_{\Lambda} 
    \leq \lvert \lvert v \rvert \rvert_{\lambda}
    \leq e^{(\lambda - \Lambda)T} \lvert \lvert v \rvert \rvert_{\Lambda} 
\end{equation}
  hence shows that for all
  $\lambda \in \R$
  it holds that
  $(\mathcal{V}, \lvert \lvert \cdot \rvert \rvert_{\lambda})$
  is an 
  $\R$-Banach space.
  The proof of
  Lemma~\ref{lem:banach_space2}
  is thus complete.
\end{proof}



The next theorem is our main
result of this section and shows 
the existence and uniqueness of
an SFPE solution under the assumption
of a Lipschitz continuous nonlinerarity. 
Theorem~\ref{thm:ex_cont_solution} is an extension of \cite[Theorem 2.9]{BGHJ2019}
to the case of 
gradient-dependent nonlinearities.

\begin{theorem}\label{thm:ex_cont_solution}
  Assume Setting~\ref{setting:sfpe},
  let $L\in (0, \infty)$,
  assume for all 
  $\varepsilon \in (0, \infty)$, 
  $t \in [0,T)$, $s\in(t,T]$, 
  $x\in O$ that
  \begin{equation}
    \limsup\nolimits_{[0,s)\times O \ni (u,y)\to (t,x)} 
    \Big[ \mathbb{P} \big( \lvert \lvert X^{y}_{u, s} 
    - X^{x}_{t, s } \rvert \rvert 
    > \varepsilon \big) 
     +\E \big[ \lvert \lvert \lvert Z^{y}_{u, s} 
    - Z^{x}_{t, s} \rvert \rvert \rvert \big] \Big] 
    = 0,
  \end{equation}
  let $f \in C([0,T) \times O \times \R^m, \R)$,
  $g \in C(O, \R)$ satisfy for all
  $t \in [0,T)$, $x \in O$, $v, w \in \R^m$
  that
  $\lvert f(t,x,v) - f(t,x,w) \rvert 
  \leq L\lvert\lvert\lvert  v-w\rvert\rvert \rvert $,
  and assume that
  $\inf_{r \in (0, \infty)} [ \sup_{t \in [0,T)\setminus K_r} $
  $\sup_{x \in O \setminus O_r} 
  (\frac{\lvert  g(x) \rvert}{V(T,x)} 
   +\frac{\lvert f(t,x,0) \rvert}{V(t,x)}
  \sqrt{T-t})] 
  = 0$,
  $\inf_{t\in[0,T]}\inf_{x\in O}
  V(t,x)>0$,  
  $\sup_{r \in (0, \infty)} [ \inf_{t \in [0,T)\setminus K_r} \inf_{x \in O \setminus O_r}$ 
  $V(t,x) ] 
  = \infty$,
  and for all
  $u\in (0,\infty)$ that
  \begin{equation}
  \label{eq:bfpt_function}
    \limsup_{r \to\infty} \Bigg[ \sup_{t \in [0,T)\setminus K_r} \sup_{x \in O \setminus O_r} \Bigg(
    \frac{\E\Big[\int_t^T \mathbbm{1}_{O_u}(X^x_{t,s})\,
    \lvert\lvert\lvert Z^x_{t,s}\rvert\rvert\rvert
    (\d s + \delta_T(\d s))\Big]
    \sqrt{T-t}}{V(t,x)}\Bigg) \Bigg]
    =0.
  \end{equation}  
  Then there exists a unique 
  $v \in C([0,T) \times O, \R^m)$
such that
\begin{enumerate}[label=(\roman*)]
\item\label{it:ex_cont_sol1}
it holds that
\begin{equation}\label{eq:ex_cont_solution1}
\limsup \limits_{r \to \infty} \left[ \sup \limits_{t \in [0,T)\setminus K_r} \sup \limits_{x \in O \setminus O_r} \left( \frac{\lvert \lvert \lvert v(t,x) \rvert \rvert \rvert}{V(t,x)}  \, \sqrt{T-t} \right) \right] = 0,
\end{equation}
\item\label{it:ex_cont_sol2}
 it holds that
  \begin{equation}\label{eq:ex_cont_sol2}
     [0,T) \times O \ni (t,x) \mapsto 
     \E \left[  g(X^x_{t,T}) Z^x_{t,T}  
     + \int_t^T  f(r, X^x_{t,r}, v(r, X^x_{t,r})) Z^x_{t,r}  \,\d r \right] \in \R^m
  \end{equation}
is well-defined and continuous,
and
\item\label{it:ex_cont_sol3}
it holds for all
$t \in [0,T)$, $x \in O$
that
\begin{equation}\label{eq:ex_cont_solution2}
v(t,x) = \E \left[ g(X^x_{t,T})Z^x_{t,T} + \int_t^T f(r, X^x_{t,r}, v(r, X^x_{t,r}))Z^x_{t,r} \,\d r \right].
\end{equation}
\end{enumerate}
\end{theorem}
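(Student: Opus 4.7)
The plan is to apply Banach's fixed point theorem to the map
\begin{equation}
\Phi(v)(t,x) := \E\!\left[g(X^x_{t,T})Z^x_{t,T} + \int_t^T f(r, X^x_{t,r}, v(r, X^x_{t,r}))Z^x_{t,r}\,\d r\right]
\end{equation}
on the $\R$-Banach space $(\mathcal{V},\lvert\lvert\cdot\rvert\rvert_\lambda)$ furnished by Lemma~\ref{lem:banach_space2} (taken with $B=\R^m$ equipped with $\lvert\lvert\lvert\cdot\rvert\rvert\rvert$), where the parameter $\lambda\in(0,\infty)$ will be chosen large at the end. Observe that a continuous function $v\colon[0,T)\times O\to\R^m$ satisfies \ref{it:ex_cont_sol1} exactly when $v\in\mathcal{V}$, so uniqueness within $\mathcal{V}$ will automatically give the asserted uniqueness in the theorem.

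The first step is to verify that $\Phi$ maps $\mathcal{V}$ into $\mathcal{V}$, which at the same time establishes well-definedness and continuity of the right-hand side of \eqref{eq:ex_cont_sol2} and hence item~\ref{it:ex_cont_sol2}. Fixing $v\in\mathcal{V}$ and writing $h(r,y):=f(r,y,v(r,y))$, the Lipschitz assumption on $f$ yields $\lvert h(r,y)\rvert\leq\lvert f(r,y,0)\rvert + L\lvert\lvert\lvert v(r,y)\rvert\rvert\rvert$. Combined with the hypotheses on $f(\cdot,\cdot,0)$ and $g$ together with the decay defining $\mathcal{V}$, this gives
\begin{equation}
\inf_{r\in(0,\infty)}\left[\sup_{t\in[0,T)\setminus K_r}\sup_{x\in O\setminus O_r}\!\left(\frac{\lvert g(x)\rvert}{V(T,x)}+\frac{\lvert h(t,x)\rvert}{V(t,x)}\sqrt{T-t}\right)\right]=0.
\end{equation}
Item~\ref{it:cont2_1} of Lemma~\ref{lem:continuity2} then yields $\Phi(v)\in C([0,T)\times O,\R^m)$, and item~\ref{it:cont2_2} of the same lemma (using hypothesis~\eqref{eq:bfpt_function}) yields the boundary decay~\eqref{eq:ex_cont_solution1}; hence $\Phi(v)\in\mathcal{V}$.

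The second step is the contraction estimate. For $v,w\in\mathcal{V}$ the triangle inequality combined with the Lipschitz property of $f$ gives
\begin{equation}
\lvert\lvert\lvert\Phi(v)(t,x)-\Phi(w)(t,x)\rvert\rvert\rvert\leq\E\!\left[\int_t^T\lvert f(r,X^x_{t,r},v(r,X^x_{t,r}))-f(r,X^x_{t,r},w(r,X^x_{t,r}))\rvert\,\lvert\lvert\lvert Z^x_{t,r}\rvert\rvert\rvert\,\d r\right],
\end{equation}
and Lemma~\ref{lem:contraction} converts this into
\begin{equation}
\lvert\lvert\lvert\Phi(v)(t,x)-\Phi(w)(t,x)\rvert\rvert\rvert\leq cL\sqrt{\tfrac{\pi^3}{4\lambda(T-t)}}\,V(t,x)\,e^{-\lambda t}\,\lvert\lvert v-w\rvert\rvert_\lambda.
\end{equation}
Multiplying by $e^{\lambda t}\sqrt{T-t}/V(t,x)$ and taking the supremum over $(t,x)\in[0,T)\times O$ gives
\begin{equation}
\lvert\lvert\Phi(v)-\Phi(w)\rvert\rvert_\lambda\leq cL\sqrt{\tfrac{\pi^3}{4\lambda}}\,\lvert\lvert v-w\rvert\rvert_\lambda.
\end{equation}
Choosing $\lambda$ large enough that $cL\sqrt{\pi^3/(4\lambda)}<1$ makes $\Phi$ a strict contraction, so Banach's fixed point theorem produces a unique $v\in\mathcal{V}$ with $\Phi(v)=v$, establishing all of \ref{it:ex_cont_sol1}, \ref{it:ex_cont_sol2}, and \ref{it:ex_cont_sol3}.

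The substantive obstacle that the preceding lemmas are carefully tailored to overcome is the temporal singularity of the Bismut-Elworthy-Li weight $Z^x_{t,r}$, whose natural $L^1$-bound blows up like $(r-t)^{-1/2}$ as $r\downarrow t$; this is precisely what forces the $\sqrt{T-t}$ weight to appear both in the definition of $\mathcal{V}$ and in the norm $\lvert\lvert\cdot\rvert\rvert_\lambda$. The technical lynchpin that rescues the contraction is item~\ref{it:int2} of Lemma~\ref{lem:integral}, which bounds the doubly-singular exponential integral $\int_t^T(r-t)^{-1/2}(T-r)^{-1/2}e^{-\lambda r}\,\d r$ by $\sqrt{\pi^3/(4\lambda(T-t))}\,e^{-\lambda t}$; it is the $\lambda^{-1/2}$ gain extracted from this estimate that drives the contraction constant strictly below~$1$.
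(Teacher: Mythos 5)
Your proposal is correct and follows the same argument as the paper: it uses Lemma~\ref{lem:banach_space2} for the Banach space $(\mathcal{V},\norm{\cdot}_\lambda)$, verifies $\Phi\colon\mathcal{V}\to\mathcal{V}$ via Lemma~\ref{lem:continuity2} with $h(r,y)=f(r,y,v(r,y))$, obtains the contraction from Lemma~\ref{lem:contraction}, and concludes by Banach's fixed point theorem. The only cosmetic difference is that the paper pins down $\lambda\geq c^2L^2\pi^3$ to get the explicit factor $\tfrac12$, whereas you simply take $\lambda$ large enough that $cL\sqrt{\pi^3/(4\lambda)}<1$; both suffice.
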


\begin{proof}[Proof of Theorem \ref{thm:ex_cont_solution}]
Let $\mathcal{V}$ satisfy
\begin{equation}
\mathcal{V} 
= \bigg\{ v \in C([0,T) \times O, \R^m)\colon
\limsup \limits_{r \to \infty} \bigg[ \sup \limits_{t \in [0,T)\setminus K_r} \sup \limits_{x \in O \setminus O_r} \left( \frac{ \lvert \lvert \lvert v(t,x) \rvert \rvert \rvert }{V(t,x)}   \, \sqrt{T-t} \right) \bigg] = 0 \bigg\},
\end{equation}
and let
$\lvert \lvert \cdot \rvert \rvert_{\lambda}\colon \mathcal{V} \to [0, \infty)$,
$\lambda \in \R$,
satisfy for all 
$\lambda \in \R$, $w \in \mathcal{V}$
that
\begin{equation}
\lvert \lvert w \rvert \rvert_{\lambda} 
= \sup \limits_{t \in [0,T)} \sup \limits_{x \in O} \left( \frac{e^{\lambda t} \lvert \lvert \lvert w(t,x) \rvert \rvert \rvert} {V(t,x)}  \, \sqrt{T-t} \right).
\end{equation}
Observe that 
Lemma \ref{lem:banach_space2} 
proves that for all
$\lambda \in \R$
it holds that
$(\mathcal{V}, \lvert \lvert \cdot \lvert \lvert_{\lambda})$
is an 
$\R$-Banach space.
  Note that for all 
  $w\in\mathcal{V}$
  it holds that
    $[0,T) \times O \ni (t,x) 
    \mapsto f(t,x,w(t,x)) \in \R$
  is a continuous function
  which satisfies that for all
  $t \in [0,T)$, $x \in O$
  it holds that
  \begin{equation}
  \begin{split}
     \lvert f(t,x,w(t,x)) \rvert 
     \leq \lvert f(t,x,0) \rvert 
     + \lvert   f(t,x,w(t,x)) - f(t,x,0) \rvert
     \leq \lvert f(t,x,0) \rvert 
     + L \lvert \lvert \lvert w(t,x) \rvert \rvert \rvert.
  \end{split}
  \end{equation}
  The assumption 
  that 
  $\inf_{r \in (0, \infty)} [ \sup_{t \in [0,T)\setminus K_r} \sup_{x \in O \setminus O_r} (  \frac{\lvert f(t,x,0) \rvert}{V(t,x)} \sqrt{T-t}) ]
  =0 $
  hence ensures that
  for all $w\in\mathcal{V}$
  it holds that
 \begin{equation}\label{eq:f_bd}
  \begin{split}
    &\inf \limits_{r \in (0, \infty)} \left[ \sup \limits_{t \in [0,T)\setminus K_r} \sup \limits_{x \in O \setminus O_r} \left(  \frac{\lvert f(t,x,w(t,x)) \rvert }{V(t,x)}
    \sqrt{T-t} \right) \right] \\
    &\leq  \inf \limits_{r \in (0, \infty)}   
    \left[ \sup \limits_{t \in [0,T)\setminus K_r} 
    \sup \limits_{x \in O \setminus O_r} 
    \left(\frac{\lvert f(t,x,0) \rvert}{V(t,x)}
    \sqrt{T-t} 
    + L \frac{\lvert \lvert \lvert w(t,x) \rvert \rvert \rvert}{V(t,x)}\sqrt{T-t} \right) \right]
    = 0.
\end{split}
\end{equation}
  Lemma \ref{lem:integrability}
  and item~\ref{it:app_res1} 
  of Lemma \ref{lem:approximation_result1}
  hence demonstrate that
  for all $w\in\mathcal{V}$
  it holds that 
  \begin{equation}\label{eq:phi_w}
     [0,T) \times O \ni (t,x) \mapsto 
     \E \bigg[  g(X^x_{t,T}) Z^x_{t,T}  
     + \int_t^T  f(r, X^x_{t,r}, w(r, X^x_{t,r})) Z^x_{t,r}  \,\d r \bigg] \in \R^m
  \end{equation}
  is well-defined.
  Furthermore, note that item~\ref{it:cont2_1}
  and \ref{it:cont2_2} of
  Lemma \ref{lem:continuity2} 
  (applied for every $w\in\mathcal{V}$ 
  with  
  $h \curvearrowleft ([0,T) \times O \ni (t, x) \mapsto f(t,x,w(t,x)) \in \R)$ 
  in the notation of Lemma \ref{lem:continuity2})
  and \eqref{eq:f_bd}
  prove that for every $w\in \mathcal{V}$
  it holds that the function in 
  \eqref{eq:phi_w}
  is in $\mathcal{V}$.
  This shows that
  there exists 
  $\Phi\colon \mathcal{V} \to \mathcal{V}$
  which satisfies for all
  $t \in [0,T)$, $x \in O$, $w \in \mathcal{V}$
  that
  \begin{equation}
    (\Phi(w))(t,x) 
    = \E \left[ g(X^x_{t,T})Z^x_{t,T} + \int_t^T f(r, X^x_{t,r}, w(r, X^x_{t,r})) Z^x_{t,r} \,\d r \right].
  \end{equation}
  Furthermore, note that 
  Lemma~\ref{lem:contraction}
  demonstrates that 
  for all
  $\lambda \in (0, \infty)$, 
  $w,\tilde{w} \in \mathcal{V}$
  it holds that
  \begin{equation}
  \begin{split}
    &\lvert \lvert \Phi(w) - \Phi(\tilde{w})  \rvert \rvert_{\lambda}\\ 
    &\leq  \sup_{t\in[0,T)}
    \sup_{x\in O}\Bigg(
    \frac{e^{\lambda t}\E \Big[ \int_t^T \left| f(r, X^x_{t,r}, v(r, X^x_{t,r}))
    -f(r, X^x_{t,r}, w(r, X^x_{t,r})) \right| \, \lvert \lvert \lvert Z^x_{t,r} \rvert \rvert \rvert \,\d r \Big]}{V(t,x)}
    \sqrt{T-t}\Bigg)\\
    &\leq cL\sqrt{\frac{\pi^3}{4\lambda}}\, 
    \lvert \lvert w-\tilde{w} \rvert \rvert_{\lambda}.
  \end{split}
  \end{equation}
  This implies that for all
  $\lambda\in [c^2L^2\pi^3, \infty)$,
  $w,\tilde{w} \in \mathcal{V}$
  it holds that
  \begin{equation}
    \lvert \lvert \Phi(w) - \Phi(\tilde{w})    \rvert \rvert_{\lambda} 
    \leq \frac{1}{2} \lvert \lvert w-\tilde{w}   \rvert \rvert_{\lambda}.
  \end{equation}
  Banach's fixed point theorem
  therefore shows that 
  there exists a unique
  $v \in \mathcal{V}$
  which satisfies 
  $\Phi(v)= v$.
  The proof of 
  Theorem~\ref{thm:ex_cont_solution}
  is thus complete.
\end{proof}

\section{SFPEs associated with stochastic differential equations (SDEs)}
\label{sec:sfpes_Z}

The goal of this section is to apply
the abstract existence and uniqueness
result in Theorem~\ref{thm:ex_cont_solution}
to the case where $X$ is an SDE
solution and $Z$ is the stochastic
process in \eqref{eq:Z_1} arising from the Bismut-Elworthy-Li formula.
Section~\ref{subsec:Z_bounds}-\ref{subsec:Z_convergence} provide several
boundedness and convergence properties
of SDE solutions which will be needed
to prove our main result, Theorem~\ref{thm:ex_cont_sol_Z}.
Throughout this section we
frequently use the following setting.

\begin{setting}
\label{setting:sfpe2}
  Let $d \in \N$, 
  $\alpha, c, T \in (0, \infty)$,
  let
  $\langle \cdot, \cdot \rangle \colon \R^d \times \R^d \to \R$
  and $\norm{\cdot}\colon\R^d\to[0,\infty)$
  satisfy for all
  $x=(x_1,\ldots,x_d)$, $y=(y_1,\ldots,y_d)\in\R^d$ that
  $\langle x, y\rangle
  =\sum_{i=1}^d x_i y_i$
  and $\norm{x}=\textstyle (\sum_{i=1}^d \abs{x_i}^2)^{1/2}$,
  let $\norm{\cdot}_F\colon\R^{d\times d}\to[0,\infty)$ satisfy for all
  $A=(A_{ij})_{i, j\in\{1,\ldots, d\}}\in\R^{d\times d}$ that
  $\norm{A}_F = (\sum_{i=1}^d\sum_{j=1}^d \abs{A_{ij}}^2)^{\frac{1}{2}}$,
  let $O\subseteq \R^d$ be an
  open set,
  let $(\Omega, \mathcal{F}, \mathbb{P}, (\mathbb{F}_s)_{s \in [0,T]})$ 
  be a filtered probability space
  satisfying the usual conditions,
  let $W \colon [0,T] \times \Omega \to \R^d$ 
  be a standard $(\mathbb{F}_s)_{s \in [0,T]}$-Brownian motion,
  let $\mu \in  C^{0,1}([0,T] \times O, \R^d)$, 
  $\sigma \in C^{0,1} ([0,T] \times O, \R^{d \times d})$
  satisfy
  for all 
  $s\in[0,T]$, $x, y \in O$, $v\in\R^d$
  that
  \begin{equation}
  \label{eq:mu_sigma_globmon1}
  \max\Big\{\langle x-y,\mu(s,x)-\mu(s,y)\rangle, 
  \frac{1}{2}\norm{\sigma(s,x)-\sigma(s,y)}_F^2\Big\}
  \leq \frac{c}{2}\norm{x-y}^2
  \end{equation}
  and $v^* \sigma(s,x) (\sigma(s,x))^* v \geq \alpha \norm{v}^2$,
  for every  $t\in [0,T]$,
  $x \in O$ let
  $X^x_t = ((X^{x,1}_{t,s},\ldots, X^{x,d}_{t,s}))_{s \in [t,T]} \colon [t,T] \times \Omega \to O$   
  be an 
  $(\mathbb{F}_s)_{s \in [t,T]}$-adapted 
  stochastic process with continuous
  sample paths satisfying that for all 
  $s \in [t,T]$ it holds a.s.\! that
  \begin{equation}
  \label{eq:X_ito_process1}
    X^x_{t,s} = x + \int_t^s \mu(r, X^x_{t,r}) \,\d r 
    + \int_t^s \sigma(r, X^x_{t,r}) \,\d W_r,
  \end{equation}
  and assume for all
  $t\in [0,T]$,
  $\omega \in \Omega$
  that
  $\left([t,T] \times O \ni (s,x) 
  \mapsto X^x_{t,s}(\omega) \in O \right) \in C^{0,1}([t,T] \times O, O)$.

\end{setting}

\subsection{Moment estimates}
\label{subsec:Z_bounds}

The next lemma establishes several
boundedness results for SDE solutions
and the stochastic process in \eqref{eq:Z_1} coming from the 
Bismut-Elworthy-Li formula.

\begin{lemma}\label{lem:Z_L_2_bd}
  Let $d \in \N$, 
  $\alpha, c, T \in (0, \infty)$,
  let $t\in [0,T]$,
  let
  $\langle \cdot, \cdot \rangle \colon \R^d \times \R^d \to \R$
  and $\norm{\cdot}\colon\R^d\to[0,\infty)$
  satisfy for all
  $x=(x_1,\ldots,x_d)$, $y=(y_1,\ldots,y_d)\in\R^d$ that
  $\langle x, y\rangle
  =\sum_{i=1}^d x_i y_i$
  and $\norm{x}=\textstyle (\sum_{i=1}^d \abs{x_i}^2)^{1/2}$,
  let $\norm{\cdot}_F\colon\R^{d\times d}\to[0,\infty)$ satisfy for all
  $A=(A_{ij})_{i, j\in\{1,\ldots, d\}}\in\R^{d\times d}$ that
  $\norm{A}_F = (\sum_{i=1}^d\sum_{j=1}^d \abs{A_{ij}}^2)^{\frac{1}{2}}$,
  let $O\subseteq \R^d$ be an
  open set,
  let $(\Omega, \mathcal{F}, \mathbb{P}, (\mathbb{F}_s)_{s \in [0,T]})$ 
  be a filtered probability space
  satisfying the usual conditions,
  let $W \colon [0,T] \times \Omega \to \R^d$ 
  be a standard $(\mathbb{F}_s)_{s \in [0,T]}$-Brownian motion,
  let $\mu \in  C^{0,1}([0,T] \times O, \R^d)$, 
  $\sigma \in C^{0,1} ([0,T] \times O, \R^{d \times d})$
  satisfy
  for all 
  $s\in[t,T]$, $x, y \in O$, $v\in\R^d$
  that
  \begin{equation}
  \label{eq:mu_sigma_globmon2}
  \max\Big\{\langle x-y,\mu(s,x)-\mu(s,y)\rangle, 
  \frac{1}{2}\norm{\sigma(s,x)-\sigma(s,y)}_F^2\Big\}
  \leq \frac{c}{2}\norm{x-y}^2
  \end{equation}
  and $v^* \sigma(s,x) (\sigma(s,x))^* v \geq \alpha \norm{v}^2$,
  let $\mathfrak{m}\in [0,\infty)$
  satisfy $\mathfrak{m}
  =\max_{s\in [0,T]}[\frac{1}{2}
  \norm{\mu(s,0)}^2
  +\norm{\sigma(s,0)}_F^2]$,
  for every  
  $x \in O$ let
  $X^x = ((X^{x}_{s,1},\ldots, X^x_{s,d}))_{s \in [t,T]} \colon [t,T] \times \Omega \to O$   
  be an 
  $(\mathbb{F}_s)_{s \in [t,T]}$-adapted 
  stochastic process with continuous
  sample paths satisfying that for all 
  $s \in [t,T]$ it holds a.s.\! that
  \begin{equation}
  \label{eq:X_ito_process2}
    X^x_{s} = x + \int_t^s \mu(r, X^x_{r}) \,\d r 
    + \int_t^s \sigma(r, X^x_{r}) \,\d W_r,
  \end{equation}
  assume for all
  $\omega \in \Omega$
  that
  $\left([t,T] \times O \ni (s,x) 
  \mapsto X^x_{s}(\omega) \in O \right) \in C^{0,1}([t,T] \times O, O)$,
  and for every 
  $x \in O$ let
  $Z^x = (Z^x_{s})_{s \in (t,T]} 
  \colon (t,T] \times \Omega \to \R^d$ 
  be an $(\mathbb{F}_s)_{s \in (t,T]}$-adapted stochastic process 
  with continuous sample paths
  satisfying that 
  for all $s \in (t,T]$ 
  it holds a.s.\!   that
  \begin{equation}
  \label{eq:Z_1}
    Z^x_{s} = \frac{1}{s-t} \int_t^s
  (\sigma(r,   X^x_{r}))^{-1} \; \Big(\frac{\partial}{\partial x} X^x_{r}\Big) \,\d W_r.
  \end{equation} 
  Then
  \begin{enumerate}[label=(\roman*)]
  \item\label{it:X_L2_bd}
  for all $x\in O$, for every
  stopping time
  $\tau\colon\Omega\to[t,T]$
  it holds that
  \begin{equation}
    \Big(\E\big[\norm{X^x_\tau}^2\big]\Big)^{\frac{1}{2}}
    \leq \exp((2c+1)T)
    \Big(\norm{x}^2+
    \frac{\mathfrak{m}}{2c+1}\Big),
  \end{equation}
  \item\label{it:der_X_bd}
  for all $x\in O$,
  for every
  stopping time
  $\tau\colon\Omega\to[t,T]$
  it holds that
  \begin{equation}
  \label{eq:der_X_bd}
  \E\Big[ \normm{\frac{\partial}{\partial x} X^x_{\tau}}_F^2\Big]
  \leq d\exp\big(2c (T-t) \big),
  \end{equation}
  \item \label{it:Y_bd_st}
  for all $x\in O$ 
  it holds that
  \begin{equation}
  \E\bigg[\normmm{\int_t^\tau
  (\sigma(r,   X^x_{r}))^{-1} \; \Big(\frac{\partial}{\partial x} X^x_{r}\Big) \,\d W_r}^2\bigg]
  \leq \frac{dT}{\alpha}\exp(2cT),
  \end{equation}
  and 
  \item \label{it:Z_L2_bd}
  for all $s\in(t,T]$,
  $x \in O$
  it holds that
  \begin{equation}
  \E\Big[\norm{Z^x_{s}}^2\Big]
  \leq\frac{d}{\alpha(s-t)^2}
  \int_t^s  \exp(2(r-t)c) \,\d r.
  \end{equation}
  \end{enumerate}
\end{lemma}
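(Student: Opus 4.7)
The plan is to handle the four items in order, with items~\ref{it:Y_bd_st} and~\ref{it:Z_L2_bd} resting on~\ref{it:der_X_bd} via It\^o's isometry, and~\ref{it:der_X_bd} constituting the main technical step. For item~\ref{it:X_L2_bd} I would apply It\^o's formula to the function $\norm{X^x_s}^2$, which produces the drift $2\langle X^x_s,\mu(s,X^x_s)\rangle + \norm{\sigma(s,X^x_s)}_F^2$. Invoking the monotonicity condition \eqref{eq:mu_sigma_globmon2} with $y=0$ together with Young's inequality yields $2\langle x,\mu(s,x)\rangle \leq (c+1)\norm{x}^2 + \norm{\mu(s,0)}^2$ and $\norm{\sigma(s,x)}_F^2 \leq 2c\norm{x}^2 + 2\norm{\sigma(s,0)}_F^2$, so that the drift is dominated by $K\norm{X^x_s}^2 + 2\mathfrak{m}$ for a suitable constant $K$ depending only on $c$. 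A standard localization via the stopping times $\tau_n := \tau \wedge \inf\{s\colon \norm{X^x_s}>n\}$ eliminates the local-martingale term; Gronwall's lemma applied to $s\mapsto \E[\norm{X^x_{s\wedge\tau}}^2]$ followed by Fatou's lemma then yields the stated estimate.

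For item~\ref{it:der_X_bd} I would first differentiate the SDE \eqref{eq:X_ito_process2} in $x$, which is justified by the $C^{0,1}$ regularity of $x\mapsto X^x_s(\omega)$ together with the $C^{0,1}$ regularity of $\mu,\sigma$. Writing $J^x_s := \frac{\partial}{\partial x} X^x_s$, this produces the variational SDE
\begin{equation*}
\d J^x_s = (\partial_x\mu)(s,X^x_s)\,J^x_s\,\d s + \sum_{i=1}^d (\partial_x\sigma_{\cdot,i})(s,X^x_s)\,J^x_s\,\d W^i_s, \qquad J^x_t = I_d.
\end{equation*}
Applying It\^o's formula column-by-column to $\norm{J^x_s}_F^2 = \sum_{k=1}^d \norm{J^x_s e_k}^2$ produces a drift built from $2\langle J^x_s e_k,(\partial_x\mu)(s,X^x_s) J^x_s e_k\rangle + \sum_{i=1}^d \norm{(\partial_x\sigma_{\cdot,i})(s,X^x_s) J^x_s e_k}^2$ summed over $k$. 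Passing to the limit in the difference-quotient version of \eqref{eq:mu_sigma_globmon2} yields the pointwise infinitesimal estimates $\langle v,(\partial_x\mu)(s,x)v\rangle \leq \tfrac{c}{2}\norm{v}^2$ and $\sum_{i=1}^d \norm{(\partial_x\sigma_{\cdot,i})(s,x)v}^2 \leq c\norm{v}^2$ for every $v\in\R^d$, which together bound the drift of $\norm{J^x_s}_F^2$ by $2c\norm{J^x_s}_F^2$. After localization, Gronwall's inequality combined with $\norm{I_d}_F^2 = d$ yields \eqref{eq:der_X_bd}.

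For item~\ref{it:Y_bd_st}, It\^o's isometry (with the standard localization using $\tau\leq T$) gives
\begin{equation*}
\E\!\left[\normmm{\int_t^\tau (\sigma(r,X^x_r))^{-1} J^x_r\,\d W_r}^2\right] = \E\!\left[\int_t^\tau \norm{(\sigma(r,X^x_r))^{-1} J^x_r}_F^2\,\d r\right],
\end{equation*}
and the ellipticity assumption $v^*\sigma(s,x)\sigma(s,x)^* v \geq \alpha\norm{v}^2$ implies $(\sigma\sigma^*)^{-1} \preceq \alpha^{-1} I$, whence $\norm{\sigma^{-1}A}_F^2 = \mathrm{tr}(A^*(\sigma\sigma^*)^{-1}A) \leq \alpha^{-1}\norm{A}_F^2$; combining this with item~\ref{it:der_X_bd} and $\tau\leq T$ gives the claim. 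Item~\ref{it:Z_L2_bd} follows by the identical computation, with the factor $(s-t)^{-2}$ pulled in front of the expectation and $s$ (rather than $T$) as the upper integration limit. The main obstacle is item~\ref{it:der_X_bd}: rigorously justifying the variational SDE from the given $C^{0,1}$ regularity (interchanging differentiation with the stochastic integral and handling null sets in a parameter-dependent way), and deducing the infinitesimal monotonicity of $\partial_x\mu$ and $\partial_x\sigma$ from the global monotonicity of $\mu,\sigma$ via limits of difference quotients. Once these are in place, items~\ref{it:X_L2_bd},~\ref{it:Y_bd_st}, and~\ref{it:Z_L2_bd} reduce to routine applications of It\^o calculus and Gronwall's inequality.
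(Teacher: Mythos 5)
Your proposal is correct and follows essentially the same approach as the paper: item~\ref{it:der_X_bd} is the crux, obtained by differentiating the SDE in $x$ and converting the global one-sided Lipschitz/monotonicity condition \eqref{eq:mu_sigma_globmon2} into an infinitesimal bound on $\partial_x\mu$ and $\partial_x\sigma$ via difference quotients, then running an It\^o--Gronwall argument; items~\ref{it:Y_bd_st} and~\ref{it:Z_L2_bd} then follow from the $L^2$-theory of stochastic integrals plus the operator-norm bound $\norm{\sigma^{-1}}^2\leq\alpha^{-1}$ implied by ellipticity. The only cosmetic differences are that the paper offloads the It\^o--Gronwall computation (for items~\ref{it:X_L2_bd} and~\ref{it:der_X_bd}) to the cited result~[HHM2019, Corollary~2.5(i)] rather than carrying it out by hand, and that it invokes the Burkholder--Davis--Gundy inequality where you use It\^o's isometry for the stopped integral---equivalent at the $p=2$ level.
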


\begin{proof}[Proof of Lemma~\ref{lem:Z_L_2_bd}]
  Throughout this proof let
  $\mathbf{e}_1,\mathbf{e}_2,\dots,\mathbf{e}_d\in\R^d$ 
  satisfy that $\mathbf{e}_1=(1,0,\dots,0)$, 
  $\mathbf{e}_2=(0,1,0,\dots,0)$, $\dots$,
  $\mathbf{e}_d=(0,\dots,0,1)$.
  First observe that
  \eqref{eq:mu_sigma_globmon2},
  the Cauchy-Schwarz inequality,
  and the fact that for all
  $a,b\in[0,\infty)$
  it holds that
  $2ab\leq a^2+b^2$ 
  imply that for all
  $x\in O$, $s\in[t,T]$
  it holds that
  \begin{equation}
  \begin{split}
    &\langle X^x_s, \mu(s,X^x_s)\rangle
    +\frac{1}{2}\norm{\sigma(s,X^x_s)}_F^2\\
    &\leq \langle X^x_s, \mu(s,X^x_s)
    -\mu(s,0) \rangle
    + \langle X^x_s, \mu(s,0) \rangle
    +\norm{\sigma(s,X^x_s)-\sigma(s,0)}_F^2
    +\norm{\sigma(s,0)}_F^2\\
    &\leq \frac{c}{2}\norm{X^x_s}^2
    + \norm{X^x_s} \norm{\mu(s,0)}
    +c\norm{X^x_s}^2
    +\norm{\sigma(s,0)}_F^2\\
    &\leq \frac{c}{2}\norm{X^x_s}^2
    + \frac{1}{2}\norm{X^x_s}^2
    +\frac{1}{2}\norm{\mu(s,0)}^2
    +c\norm{X^x_s}^2
    +\norm{\sigma(s,0)}_F^2
    \leq (2c+1)\norm{X^x_s}^2
    +\mathfrak{m}.
  \end{split}
\end{equation}    
  Combining this with
  \cite[Corollary 2.5 (i)]{HHM2019} 
  (applied for every $x\in\R^d$
  with $H\curvearrowleft \R^d$, $U\curvearrowleft \R^d$,
   $T\curvearrowleft T-t$, 
   $a\curvearrowleft ([0,T-t]\times \Omega
   \ni (s,\omega)\mapsto \mu(t+s,x)\in\R^d)$,
   $b\curvearrowleft ([0,T-t]\times \Omega
   \ni (s,\omega)\mapsto \sigma(t+s,x)\in\R^{d\times d})$,
     $X\curvearrowleft ([0,T-t]\times\Omega
  \ni(s,\omega)\mapsto X^x_{t+s}(\omega)\in O)$,
  $p \curvearrowleft 2$, 
  $\alpha \curvearrowleft ([0,T-t]\times\Omega \ni (s,\omega) \mapsto (2c+1) \in \R)$,
  $\beta \curvearrowleft ([0,T-t]\times\Omega \ni (s,\omega) \mapsto \sqrt{2\mathfrak{m}} \in \R)$,
  $q_1 \curvearrowleft 2$, 
  $q_2 \curvearrowleft \infty$
  in the notation of \cite[Corollary 2.5 (i)] {HHM2019})
  shows that for all
  $x\in O$, $\tau\colon\Omega\to [t,T]$
  stopping time
  it holds that
  \begin{equation}
  \begin{split}
    &\Big(\E\big[\norm{X^x_\tau}^2\big]\Big)^{\frac{1}{2}}
    \leq \exp((2c+1)T)
    \Big(\norm{x}^2+2\mathfrak{m}\int_0^T
    \exp(-2(2c+1)s)
    \, \d s \Big)\\
    &\leq \exp((2c+1)T)
    \Big(\norm{x}^2+
    \frac{2\mathfrak{m}}{4c+2}
    \big(1-\exp(-(4c+2)T)\big) \Big)\\
    &\leq \exp((2c+1)T)
    \Big(\norm{x}^2+
    \frac{\mathfrak{m}}{2c+1}\Big).
  \end{split}
  \end{equation}
  This establishes 
  item~\ref{it:X_L2_bd}.
  Next note that \eqref{eq:X_ito_process2}, 
  the Leibniz integral rule,
  the chain rule, and the fact that 
  $\mu\in C^{0,1}([0,T]\times O,\R^d)$
  and $\sigma\in C^{0,1}([0,T]\times O,\R^{d\times d})$ show that
  for all $s \in [t,T]$, $x \in O$, 
  $j\in\{1,\ldots,d\}$
  it holds a.s.\! that
  \begin{equation}\label{eq:X_der}
  \begin{split}
    \frac{\partial}{\partial x_j} X^{x}_{s} 
    &=  \mathbf{e}_j + \sum_{i=1}^d \Bigg[\int_t^s \Big(\frac{\partial \mu}{\partial x_i} \Big)(r, X^{x}_{r})
    \Big(\frac{\partial}{\partial x_j} X^{x}_{r,i}\Big)\,\d r 
    + \int_t^s \Big(\frac{\partial\sigma}{\partial x_i} \Big)(r, X^{x}_{r})
    \Big(\frac{\partial}{\partial x_j} X^{x}_{r,i}\Big)\,\d W_r\Bigg].
  \end{split}
  \end{equation}
  Moreover, observe that
  the chain rule, 
  \eqref{eq:mu_sigma_globmon2},
  and the assumption that 
  for all $\omega \in \Omega$
  it holds that
  $([t,T] \times O \ni (s,x) $ $\mapsto X^x_{s}(\omega) \in O) \in C^{0,1}([t,T] \times O, O)$
  ensure that for all $r\in[t,T]$, 
  $x\in O$, $j\in\{1,\ldots,d\}$
  it holds a.s.\! that
  \begin{equation}
  \begin{split}
  &\Big\langle \Big(\frac{\partial}{\partial x_j}X^x_{r}\Big), \sum_{i=1}^d \Big(\frac{\partial\mu}{\partial x_i}\Big)(r,X^x_{r})\Big(\frac{\partial}{\partial x_j}X^{x}_{r,i}\Big) \Big\rangle
  +\frac{1}{2}\normmm{\sum_{k=1}^d\Big(\frac{\partial\sigma}{\partial x_k}\Big)(r,X^x_{r})
  \Big(\frac{\partial}{\partial x_j}X^{x}_{r,k}\Big)}_F^2\\
  &=\lim_{(0,\infty)\ni h\to 0}\,\frac{1}{h^2}
  \bigg[\Big\langle X^{x+h\mathbf{e}_j}_{r}-X^x_r, \mu(r,X^{x+h\mathbf{e}_j}_{r})-\mu(r,X^{x}_{r})\Big\rangle
  +\frac{1}{2}\normm{\sigma(r,X^{x+h\mathbf{e}_j}_{r})-\sigma(r,X^x_r)}_F^2\bigg]\\
  &\leq \Big(\frac{c}{2}+\frac{c}{2}\Big)
  \lim_{(0,\infty)\ni h\to 0}\frac{1}{h^2}
  \bigg[ \normm{X^{x+h\mathbf{e}_j}_r-X^x_r}^2\bigg] 
  = c \normmm{\frac{\partial}{\partial x_j}X^x_{r}}^2.
  \end{split}
  \end{equation}
  This, \eqref{eq:X_der}, and e.g.,
  \cite[Corollary 2.5(i)]{HHM2019} 
  (applied for every $x\in\R^d$, $j\in\{1,\ldots,d\}$ 
  with $H\curvearrowleft \R^d$, $U\curvearrowleft \R^d$,
   $T\curvearrowleft T-t$, 
   $a\curvearrowleft ([0,T-t]\times \Omega
   \ni (s,\omega)\mapsto \mu(t+s,x)\in\R^d)$,
   $b\curvearrowleft ([0,T-t]\times \Omega
   \ni (s,\omega)\mapsto \sigma(t+s,x)\in\R^{d\times d})$,
  $X\curvearrowleft (\frac{\partial}{\partial x_j}X^x_{t+r})_{r\in[0,T-t]}$,
  $p \curvearrowleft 2$, $q_1 \curvearrowleft 2$, 
  $q_2 \curvearrowleft \infty$, 
  $\alpha \curvearrowleft ([0,T-t] \ni s \mapsto c \in \R)$, and
  $\beta \curvearrowleft 0$
  in the notation of \cite[Corollary 2.5(i)] {HHM2019})
  demonstrate that
  for all $j\in\{1,\ldots,d\}$,
  $x\in O$
  it holds that
  \begin{equation}
  \label{eq:der_j_X_bd}
  \Big(\E\Big[ \normm{\frac{\partial}{\partial x_j}X^x_{\tau}}^2\Big]\Big)^{\frac{1}{2}}
  \leq \E\big[\exp\big( c (\tau-t)\big)\big].
  \end{equation}
  Hence, we obtain that
  for all $x\in O$
  it holds that
  \begin{equation}
  \label{eq:der_X_bd2}
  \E\Big[ \normm{\frac{\partial}{\partial x} X^x_{\tau}}_F^2\Big]
  = \sum_{j=1}^d
  \E\Big[ \normm{\frac{\partial}{\partial x_j}X^x_{\tau}}^2\Big]
  \leq d \Big(\E\big[\exp\big( c (\tau-t)\big)\big]\Big)^{2}
  \leq d\exp\big(2c (T-t) \big).
  \end{equation}
  This establishes item~\ref{it:der_X_bd}.
  Next observe that the assumption that
  for all $r \in [t,T]$, $x \in O$, $y\in\R^d$ 
  it holds that
  $y^* \sigma(r,x) (\sigma(r,x))^* y  
  \geq \alpha \lvert \lvert y \rvert \rvert^2$ 
  ensures that for all 
  $r \in [t,T]$,
  $x \in O$, $y\in\R^d$
  it holds that
  $\sigma(r,X^x_r)$ is an invertible matrix
  and 
  \begin{equation}
  \norm{y}^2 = y^*  \left( \sigma(r, X^x_{r}) \right)^{-1} \sigma(r, X^x_{r}) ( \sigma(r, X^x_{r}))^* \left( (\sigma(r, X^x_{r}))^{-1} \right)^* y
  \geq \alpha \Big| \Big| \left( (\sigma(r, X^x_{r}))^{-1} \right)^* y \Big|\Big|^2.
  \end{equation}
  This implies for all  
  $r \in [t,T]$,
  $x \in O$
  that
  \begin{equation}\label{eq:sigma_inv_bd}
  \lvert \lvert  (\sigma(r, X^x_{r}))^{-1}   \rvert \rvert^2_{L(\R^d)}
  =\sup \limits_{y \in \R^d \backslash \{ 0 \}} \frac{\lvert \lvert \left( (\sigma(r, X^x_{r}))^{-1} \right)^* y \rvert \rvert^2}{\lvert \lvert y \rvert \rvert^2} 
  \leq \frac{1}{\alpha}.
  \end{equation}
  The Burkholder-Davis-Gundy inequality,
  Fubini's theorem, 
  and \eqref{eq:der_X_bd2}
  therefore prove 
  that for all $x \in O$,
  $\tau\colon\Omega\to [t,T]$
  stopping time
  it holds that
  \begin{equation}
  \begin{split}
    &\E \left[ \normmm{\int_t^\tau (\sigma(r, X^x_{r}))^{-1} \Big(\frac{\partial}{\partial x} X^x_{r}\Big)  \,\d W_r}^2 \right] 
  \leq  \E \left[ \int_t^\tau
  \normmm{(\sigma(r, X^x_{r}))^{-1} \Big(\frac{\partial}{\partial x} X^x_{r}\Big)}^2_F \,\d r  \right] \\
  &\leq   \E \left[ \int_t^\tau   \normm{(\sigma(r, X^x_{r}))^{-1}}^2_{L(\R^d)} \normmm{\frac{\partial}{\partial x} X^x_{r}}^2_{F} \,\d r  \right] 
  \leq \frac{1}{\alpha} \E \left[ \int_t^T
  \normmm{\frac{\partial}{\partial x} X^x_{r}}_F^2
\, \d r \right]\\
  &= \frac{1}{\alpha} \int_t^T 
  \E \left[ \normmm{\frac{\partial}{\partial x} X^x_{r}}_F^2\right]\, \d r 
  \leq \frac{d}{\alpha} \int_t^T \exp(2cT) \,\d r
  \leq \frac{dT}{\alpha}\exp(2cT).
  \end{split}
  \end{equation}
  This establishes 
  item~\ref{it:Y_bd_st}.  
  Next note that combining
  the Burkholder-Davis-Gundy inequality,
  Fubini's theorem, 
  \eqref{eq:der_j_X_bd},
  and \eqref{eq:sigma_inv_bd}
  demonstrates that for all
  $s \in (t,T]$, $x \in O$
  it holds that
  \begin{equation}\label{eq:Z_bounded}
  \begin{split}
  &(s-t)^2 \E \left[ \norm{Z^x_{s}}^2 \right]
  = \E \left[ \normmm{\int_t^s (\sigma(r, X^x_{r}))^{-1} \Big(\frac{\partial}{\partial x} X^x_{r}\Big)  \,\d W_r}^2 \right] \\
  &\leq  \E \left[ \int_t^s\normmm{(\sigma(r, X^x_{r}))^{-1} \Big(\frac{\partial}{\partial x} X^x_{r}\Big)}^2_F \,\d r  \right] 
  \leq   \E \left[ \int_t^s   \normm{(\sigma(r, X^x_{r}))^{-1}}^2_{L(\R^d)} \normmm{\frac{\partial}{\partial x} X^x_{r}}^2_{F} \,\d r  \right] \\
  &\leq \frac{1}{\alpha} \E \left[ \int_t^s 
  \normmm{\frac{\partial}{\partial x} X^x_{r}}_F^2
\, \d r \right]
  = \frac{1}{\alpha} \int_t^s 
  \E \left[ \normmm{\frac{\partial}{\partial x} X^x_{r}}_F^2\right]\, \d r 
  \leq \frac{d}{\alpha} \int_t^s \exp(2(r-t)c) \,\d r.\\
  \end{split}
  \end{equation}
  This establishes item~\ref{it:Z_L2_bd}. 
  The proof of Lemma~\ref{lem:Z_L_2_bd}
  is thus complete.
\end{proof}
  
\subsection{Continuity in the starting point and starting time}
\label{subsec:Z_convergence}

In this section we provide 
two convergence results for stochastic
processes in the starting point. 
The following lemma demonstrates 
convergence in probability 
of SDE solutions in the starting point.
Lemma~\ref{lem:X_cont_in_prob} 
is a generalization of 
\cite[Lemma 3.7]{BGHJ2019}.
 
\begin{lemma}\label{lem:X_cont_in_prob}
  Let $d,m \in \N$, 
  $T \in (0, \infty)$,
  let $\norm{\cdot}\colon\R^d\to[0,\infty)$
  and $\lvert\lvert\lvert\cdot\rvert\rvert\rvert\colon\R^{d\times m}\to[0,\infty)$ be norms,
  let $O\subseteq \R^d$ be a non-empty
  open set,
  for every $r \in (0, \infty)$
  let 
  $O_r \subseteq O$
  satisfy
  $O_r = \{  x \in O\colon \lvert \lvert x \rvert \rvert \leq r \text{ and } \{ y \in \R^d\colon \lvert \lvert y-x \rvert \rvert 
  < \frac{1}{r} \} \subseteq O \}$,
  let $\mu \in  C([0,T] \times O, \R^d)$, 
  $\sigma \in C([0,T] \times O, \R^{d \times m})$
  satisfy for all $r\in(0,\infty)$
  that
  \begin{equation}     
  \label{eq:mu_sigma_loclip}
  \sup \bigg(\bigg\{
  \frac{\norm{\mu(t,x)-\mu(t,y)}
  +\lvert\lvert\lvert\sigma(t,x)
  -\sigma(t,y)\rvert\rvert\rvert}{\norm{x-y}}\colon t\in[0,T], x,y\in O_r, x\neq y \bigg\} 
  \cup \{0\} \bigg)
  <\infty,
  \end{equation}
  let $(\Omega, \mathcal{F}, \mathbb{P}, (\mathbb{F}_s)_{s \in [0,T]})$ 
  be a filtered probability space
  satisfying the usual conditions,
  let $W \colon [0,T] \times \Omega \to \R^m$ 
  be a standard $(\mathbb{F}_s)_{s \in [0,T]}$-Brownian motion,
  for every  $t\in[0,T]$,
  $x \in O$ let
  $X^x_t = (X^{x}_{t,s})_{s \in [t,T]} \colon [t,T] \times \Omega \to O$   
  be an 
  $(\mathbb{F}_s)_{s \in [t,T]}$-adapted 
  stochastic process with continuous
  sample paths satisfying that for all 
  $s \in [t,T]$ it holds a.s.\! that
  \begin{equation}
  X^x_{t,s} = x + \int_t^s \mu(r, X^x_{t,r}) \,\d r + \int_t^s \sigma(r, X^x_{t,r}) \,\d W_r,
  \end{equation}
  and assume for all
  non-empty, compact
  $\cK\subseteq [0,T]\times O$
  that 
  \begin{equation}\label{eq:X_cont_ass}
    \inf_{k\in\N}\Big[\sup_{(t,x)\,\in \cK}\Big(
  \sup_{\tau\colon\Omega\to[t,T] \text{ stopping time}}
    \mathbb{P}\big(\norm{X^{x}_{t,\tau}}\geq k\big)\Big)\Big]=0.
  \end{equation}
  Then it holds for all  
  $\varepsilon\in(0,\infty)$,
  $s\in[0,T]$, and all 
  $(t_n,x_n)\in [0,T]\times O$, $n\in\N_0$,
  with $\limsup_{n\to\infty}[\abs{t_n-t_0}
  +\norm{x_n-x_0}]=0$ that
  \begin{equation}\label{eq:X_cont}
    \limsup\nolimits_{n\to\infty} 
    \Big[ 
    \mathbb{P} \Big( \normm{X^{x_n}_{t_n, \max\{s,t_n\}} 
    - X^{x_0}_{t_0, \max\{s,t_0\}}}
    \geq \varepsilon \Big)\Big] 
    = 0.
  \end{equation}
\end{lemma}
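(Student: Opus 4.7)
I would follow a standard localization-and-Gronwall scheme based on the local Lipschitz hypothesis \eqref{eq:mu_sigma_loclip} and the uniform non-explosion estimate \eqref{eq:X_cont_ass}. A subsequence argument reduces matters to the case of a monotone sequence $(t_n)_{n\in\N}$; by symmetry I would assume $t_n\le t_0$ for all $n$. If $s<t_0$ then eventually $s<t_n\le t_0$, so $X^{x_n}_{t_n,t_n}=x_n$ and $X^{x_0}_{t_0,t_0}=x_0$, and the conclusion follows from $\norm{x_n-x_0}\to0$. The substantial case is therefore $s\ge t_0$, in which eventually $\max\{s,t_n\}=\max\{s,t_0\}=s$ and one has to show that $X^{x_n}_{t_n,s}\to X^{x_0}_{t_0,s}$ in probability.

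For this principal case I would fix $k\in\N$, set $\cK=\{(t_n,x_n)\colon n\in\N_0\}$, and introduce the stopping time
\begin{equation*}
  \tau^n_k=\inf\bigl\{\,r\in[t_n,T]\colon X^{x_n}_{t_n,r}\notin O_k\text{ or }X^{x_0}_{t_0,\max\{r,t_0\}}\notin O_k\,\bigr\}\wedge T.
\end{equation*}
Since $\cK$ is a compact subset of $[0,T]\times O$, applying \eqref{eq:X_cont_ass} on $\cK$ with $\tau$ chosen as the first-exit time from $O_k$ (together with a symmetric bound for the process started at $(t_0,x_0)$) yields $\limsup_{k\to\infty}\sup_{n\in\N_0}\mathbb{P}(\tau^n_k<s)=0$. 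On $\{r\le\tau^n_k\}$ both processes remain in the compact set $O_k$, so by \eqref{eq:mu_sigma_loclip} the coefficients $\mu,\sigma$ are globally Lipschitz on this region with a constant $L_k\in(0,\infty)$ and, by continuity, uniformly bounded on $[0,T]\times O_k$ by some $M_k\in(0,\infty)$. Applying Itô's formula to $r\mapsto\norm{X^{x_n}_{t_n,r}-X^{x_0}_{t_0,r}}^2$ on $[t_0,s\wedge\tau^n_k]$, dropping the martingale part in expectation, and invoking Gronwall's lemma yields
\begin{equation*}
  \E\!\left[\norm{X^{x_n}_{t_n,s\wedge\tau^n_k}-X^{x_0}_{t_0,s\wedge\tau^n_k}}^2\right]\le e^{(2L_k+L_k^2)T}\,\E\!\left[\norm{X^{x_n}_{t_n,t_0\wedge\tau^n_k}-x_0}^2\right].
\end{equation*}

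The starting discrepancy is controlled through the decomposition
\begin{equation*}
  X^{x_n}_{t_n,t_0\wedge\tau^n_k}-x_0=(x_n-x_0)+\int_{t_n}^{t_0\wedge\tau^n_k}\mu(u,X^{x_n}_{t_n,u})\,\d u+\int_{t_n}^{t_0\wedge\tau^n_k}\sigma(u,X^{x_n}_{t_n,u})\,\d W_u
\end{equation*}
together with the bound $M_k$ and Itô's isometry, which show that the right-hand side of the Gronwall estimate tends to $0$ as $n\to\infty$ for every fixed $k$. Markov's inequality then supplies the split $\mathbb{P}(\norm{X^{x_n}_{t_n,s}-X^{x_0}_{t_0,s}}\ge\varepsilon)\le\mathbb{P}(\tau^n_k<s)+\varepsilon^{-2}\E[\norm{X^{x_n}_{t_n,s\wedge\tau^n_k}-X^{x_0}_{t_0,s\wedge\tau^n_k}}^2]$, and sending first $n\to\infty$ and then $k\to\infty$ completes the proof. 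The main obstacle I anticipate is the extraction of uniform-in-$n$ control of $\mathbb{P}(\tau^n_k<s)$ from \eqref{eq:X_cont_ass}, which is phrased for arbitrary stopping times at single initial data; the compactness of $\cK$ together with the choice of $\tau$ as the first-exit time from $O_k$ is the crucial link, and a secondary subtlety is that exiting $O_k$ requires controlling also proximity to $\partial O$, which is vacuous in the principal application $O=\R^d$ and otherwise follows from the $O$-valuedness and continuity of the sample paths.
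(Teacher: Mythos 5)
Your plan — localize via the first exit time $\tau^n_k$ from $O_k$, apply Itô plus Gronwall to the stopped difference, control the tail by Markov, then send $n\to\infty$ and $k\to\infty$ — is a genuinely different route from the paper's. The paper does not work directly with the original processes: it builds continuous compactly supported global modifications $m_n,s_n$ of $\mu,\sigma$ (Urysohn-type cutoffs, equal to $\mu,\sigma$ on $O_n$ and vanishing outside a slightly larger compact $\overline{U_n}\subset O$), solves the globally Lipschitz SDE with these truncated coefficients to obtain auxiliary processes $\mathcal X^{x,n}_{t,\cdot}$ that are confined to $\overline{U_n}$ by construction, establishes a pathwise identification of $X^x$ and $\mathcal X^{x,n}$ up to a \emph{norm-based} stopping time via \cite[Lemma 3.5]{BGHJ2019}, and then invokes a continuous-dependence estimate for the truncated (globally Lipschitz) SDE via \cite[Lemma 3.6]{BGHJ2019}. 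The hypothesis \eqref{eq:X_cont_ass} is then only needed to control the probability that the norm ever reaches $n$. Your direct Itô--Gronwall argument is cleaner in spirit, but it trades away precisely the feature of the paper's construction that makes the norm-based non-explosion hypothesis sufficient.

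The concrete gap is the step $\limsup_{k\to\infty}\sup_{n\in\N_0}\mathbb P(\tau^n_k<s)=0$. Your stopping time $\tau^n_k$ is the first exit from $O_k$, and $O_k$ is cut out by two conditions: $\lVert x\rVert\le k$ \emph{and} $\{y\colon\lVert y-x\rVert<\tfrac1k\}\subseteq O$. The hypothesis \eqref{eq:X_cont_ass} controls only the first — it is a statement about $\mathbb P(\lVert X^x_{t,\tau}\rVert\ge k)$ — so it gives uniform-in-$n$ control of norm-blowup but says nothing about the processes approaching $\partial O$. Your final remark that this is handled by the $O$-valuedness and continuity of the sample paths is not adequate: that observation does give, for each \emph{fixed} $n$, that $\mathbb P(\exists r\colon X^{x_n}_{t_n,r}\notin O_k)\downarrow 0$ as $k\to\infty$ (the random compact path $\{X^{x_n}_{t_n,r}(\omega)\colon r\in[t_n,s]\}$ is contained in some $O_k$), but it does not give the convergence \emph{uniformly over $n$}, which is exactly what you need before sending $n\to\infty$. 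Nothing in the hypotheses supplies that uniform tightness with respect to the distance to $\partial O$. When $O=\R^d$ the second defining condition of $O_k$ is vacuous, the two stopping times coincide, and your proof closes; for general open $O\subsetneq\R^d$ it does not, and that is precisely the case where the paper's truncation device is doing the work. (A secondary, easily repaired, point: $\lVert\cdot\rVert$ is an arbitrary norm, so $\lVert\cdot\rVert^2$ need not be $C^2$; apply Itô to the Euclidean square and invoke equivalence of norms.)
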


\begin{proof}[Proof of Lemma~\ref{lem:X_cont_in_prob}]
  Throughout this proof let 
  $(t_n,x_n)\in [0,T]\times O$, $n\in\N_0$,
  satisfy $\limsup_{n\to\infty}[\abs{t_n-t_0}
  +\norm{x_n-x_0}]=0$
  and let
  $U_n\subseteq O$, $n\in\N$,
  satisfy for all $n\in\N$ that
  $U_n=\{x\in  O\colon
  (\exists y\in O_n\colon\norm{y-x}
  < \frac{1}{2n})\}$.
  Note that for every $n\in\N$
  it holds that $O_n\subseteq O$ 
  is a compact set,
  $U_n\subseteq O$ is an open set,
  and $O_n\subseteq U_n$.
  Combining this with
  \cite[Theorem II.3.7]{Lang2001}
  (applied for every $n\in\N$
  with $E\curvearrowleft [0,T]\times O$,
  $A_1 \curvearrowleft [0,T]\times O\setminus U_n$,
  $A_2 \curvearrowleft [0,T]\times O_n$
  in the notation of 
  \cite[Theorem II.3.7]{Lang2001})
  demonstrates that
  for all $n\in\N$
  there exists $\varphi_n\in C^\infty([0,T]\times O,\R)$
  with compact support
  which satisfies for all 
  $t\in [0,T]$, $x\in O$ that
  \begin{equation}\label{eq:varphi_n}
    \mathbbm{1}_{[0,T]\times O_n}(t,x)
    \leq \varphi_n(t,x)
    \leq \mathbbm{1}_{[0,T]\times U_n}(t,x).
  \end{equation}    
  Let $m_n\colon[0,T]\times \R^d\to\R^d$,
  $n\in\N$, and
  $s_n\colon[0,T]\times\R^d\to\R^{d\times m}$,
  $n\in\N$, satisfy for all
  $n\in\N$, $t\in [0,T]$, $x\in \R^d$ that
  \begin{equation}
  \label{eq:m_n_and_s_n}
    m_n(t,x) = \begin{cases}
      \varphi_n(t,x)\mu(t,x)
      &\colon x\in O\\
      0 &\colon x\in\R^d\setminus O
    \end{cases}
    \quad \text{and} \quad
    s_n(t,x) = \begin{cases}
      \varphi_n(t,x)\sigma(t,x)
      &\colon x\in O\\
      0 &\colon x\in\R^d\setminus O.
    \end{cases}
  \end{equation}
  Combining this with
  \eqref{eq:mu_sigma_loclip} and
  \eqref{eq:varphi_n}
  shows that
  $m_n\colon [0,T]\times \R^d\to \R^d$,
  $n\in\N$,
  and $s_n\colon [0,T]\times \R^d\to \R^{d\times m}$, $n\in\N$,
  are compactly supported, 
  continuous functions which satisfy that
  \begin{enumerate}[label=(\Roman*)]
  \item\label{it:m_n_s_n_properties1}
  for all $n\in\N$
  it holds that
    \begin{equation}
    \begin{split}
      &\sup_{t\in [0,T]}
      \sup_{\substack{x,y\in\R^d\\x\neq y}}
      \Bigg[ 
      \frac{\norm{m_n(t,x)-m_n(t,y)}
      +\lvert\lvert\lvert s_n(t,x)-s_n(t,y)\rvert\rvert\rvert}{\norm{x-y}} 
      \Bigg]
      <\infty,
    \end{split}
    \end{equation}
  \item\label{it:m_n_s_n_properties2}
     for all $n\in\N$, $t\in [0,T]$,
  $x\in O_n$
  it holds that
  \begin{equation}
    \big[\norm{m_n(t,x)-\mu(t,x)}
    +\lvert\lvert\lvert s_n(t,x)-\sigma(t,x)\rvert\rvert\rvert\big]
    =0,
  \end{equation}
  and 
  \item\label{it:m_n_s_n_properties3}
  for all $n\in\N$, $t\in [0,T]$,
  $x\in O\setminus U_n$
  it holds that
  \begin{equation}
    \big[\norm{m_n(t,x)}
    +\lvert\lvert\lvert s_n(t,x)\rvert\rvert\rvert \big]
    =0.
  \end{equation}
\end{enumerate}    
  Observe that
  \cite[Theorem 5.2.9]{KaratzasShreve1991}
  and item~\ref{it:m_n_s_n_properties1}
  demonstrate that for every
  $n\in\N$, $t\in [0,T]$, $x\in O$
  there exists an 
  $(\mathbb{F}_s)_{s\in [t,T]}$-adapted
  stochastic process 
  $X^{x,n}_t=(X^{x,n}_{t,s})_{s\in[t,T]}
  \colon[t,T]\times\Omega\to\R^d$ 
  with continuous 
  sample paths
  which satisfies 
  that for all $s\in[t,T]$ it holds
  a.s.\! that
  \begin{equation}
  \label{eq:X_app}
    X^{x,n}_{t,s} = x
    +\int_t^s m_n(r, X^{x,n}_{t,r})\,\d r
    +\int_t^s s_n(r, X^{x,n}_{t,r})\,\d W_r.
  \end{equation}
  Moreover, note that item~\ref{it:m_n_s_n_properties3}
  ensures that for all $n\in\N$
  it holds that
  $\operatorname{supp}(m_n)\cup
  \operatorname{supp}(s_n) \subseteq [0,T]\times U_n$.
  This and \cite[Lemma 3.4]{BGHJ2019}
  (applied for every $n\in\N$, 
  $t\in[0,T]$, $x\in O$ 
  with $T\curvearrowleft T-t$,
  $\mathcal{O}\curvearrowleft U_n$,
  $\mu\curvearrowleft ([0,T-t]\times O \ni (s,y)\mapsto m_n(t+s,y)\in\R^d)$,
  $\sigma\curvearrowleft ([0,T-t]\times O \ni (s,y)\mapsto s_n(t+s,y)\in\R^{d\times m})$,
  $\mathbb{F}\curvearrowleft (\mathbb{F}_{t+s})_{s\in[0,T-t]}$,
  $W\curvearrowleft ([0,T-t]\times\Omega\ni(s,\omega)\mapsto W_{t+s}(\omega)-W_t(\omega)\in\R^m)$,
  $X\curvearrowleft ([0,T-t]\times\Omega\ni(s,\omega)\mapsto X^{x,n}_{t,t+s}\in O)$
  in the notation of \cite[Lemma 3.4]{BGHJ2019})
  show that
  for all $n\in\N$, $t\in[0,T]$,
  $x\in U_n$ it holds that
  $\mathbb{P}(\forall s\in[t,T]\colon
  X^{x,n}_{t,s}\in \overline{U_n})=1$
  and 
  for all $n\in\N$, $t\in[0,T]$,
  $x\in O\setminus U_n$ it holds that
  $\mathbb{P}(\forall s\in[t,T]\colon
  X^{x,n}_{t,s}=x)=1$.
  This ensures that for all 
  $n\in\N$, $t\in[0,T]$, $x\in O$ 
  there exists an 
  $(\mathbb{F}_s)_{s\in [t,T]}$-adapted
  stochastic process 
  $\mathcal{X}^{x,n}_t=(\mathcal{X}^{x,n}_{t,s})_{s\in[t,T]}
  \colon[t,T]\times\Omega\to O$ 
  with continuous 
  sample paths
  which satisfies 
  that for all $s\in[t,T]$ it holds
  a.s.\! that
  \begin{equation}
  \label{eq:X_app2}
    \mathcal{X}^{x,n}_{t,s} = x
    +\int_t^s m_n(r, \mathcal{X}^{x,n}_{t,r})\,\d r
    +\int_t^s s_n(r, \mathcal{X}^{x,n}_{t,r})\,\d W_r.
  \end{equation}
  In the next step let
  $\tau^{n,t,x}\colon \Omega\to [t,T]$,
  $n\in\N$, $t\in[0,T]$, $x\in O$
  satisfy for every $n\in\N$, $t\in [0,T]$,
  $x\in O$, $\omega\in\Omega$ that
  $\tau^{n,t,x}(\omega)=
  \inf(\{s\in [t,T]\colon
  \max\{\norm{\mathcal{X}^{x,n}_{t,s}},
  \norm{X^{x}_{t,s}}\}> n \}
  \cup \{T\})$.
  Note that for every $n\in\N$,
  $t\in[0,T]$, $x\in O$
  it holds that 
  $\tau^{n,t,x}\colon \Omega\to [t,T]$
  is an $(\mathbb{F}_s)_{s\in[t,T]}$-
  stopping time.  
  Next observe that
  \cite[Lemma 3.5]{BGHJ2019}
  (applied for every $n\in\N$, 
  $t\in[0,T]$, $x\in O$ 
  with $T\curvearrowleft T-t$,
  $\mathcal{C}\curvearrowleft
  [0,T-t]\times O_n$,
  $\mu_1\curvearrowleft ([0,T-t]\times O\ni (s,y)\mapsto \mu(t+s,y)\in\R^d)$,
  $\mu_2\curvearrowleft ([0,T-t]\times O \ni (s,y)\mapsto m_n(t+s,y)\in\R^d)$,
  $\sigma_1\curvearrowleft ([0,T-t]\times O\ni (s,y)\mapsto \sigma(t+s,y)\in\R^{d\times m})$,
  $\sigma_2\curvearrowleft ([0,T-t]\times O \ni (s,y)\mapsto s_n(t+s,y)\in\R^{d\times m})$,
  $\mathbb{F}\curvearrowleft (\mathbb{F}_{t+s})_{s\in[0,T-t]}$,
  $W\curvearrowleft ([0,T-t]\times\Omega\ni(s,\omega)\mapsto W_{t-s}(\omega)-W_t(\omega)\in\R^m)$,
  $X^{(1)}\curvearrowleft ([0,T-t]\times\Omega\ni(s,\omega)\mapsto X^x_{t,t+s}\in O)$,
  $X^{(2)}\curvearrowleft ([0,T-t]\times\Omega\ni(s,\omega)\mapsto \mathcal{X}^{x,n}_{t,t+s}\in \R^d)$,
  $\tau\curvearrowleft \tau^{n,t,x}-t$
  in the notation of \cite[Lemma 3.5]{BGHJ2019}),
  item~\ref{it:m_n_s_n_properties2},  
  and the fact that $O_n$ 
  is a compact set
  demonstrate that 
  for all $n\in\N$, $t\in[0,T]$,
  $x\in O$
  it holds that
  \begin{equation}
    \mathbb{P}(\forall s\in [t,T]\colon
    \mathbbm{1}_{\{s\leq \tau^{n,t,x}\}}
    \norm{\mathcal{X}^{x,n}_{t,s}-X^{x}_{t,s}}
    =0)
    =1.
\end{equation}    
  This implies that 
  for all $\varepsilon\in(0,\infty)$, 
  $n\in\N$, $t\in[0,T]$,
  $s\in [t,T]$, $x\in O$
  it holds that
  \begin{equation}
  \label{eq:X_n_conv}
  \begin{split}
    \mathbb{P}(\norm{\mathcal{X}^{x,n}_{t,s}
    -X^x_{t,s}}\geq \varepsilon)
    \leq \mathbb{P}(\tau^{n,t,x} < s)
    \leq \mathbb{P}(\norm{X^{x}_{t,\tau^{n,t,x}}}\geq n)
    \leq \sup_{\substack{\tau\colon\Omega\to[t,T]\\ \text{stopping time}}}
    \mathbb{P}(\norm{X^{x}_{t,\tau}}\geq n). 
  \end{split}  
  \end{equation}
  Moreover, observe that 
  \cite[Lemma 3.6]{BGHJ2019}
  and the fact that all norms on
  a finite-dimensional 
  $\R$-vector space
  are equivalent
  prove that there exist 
  $c_k$, $k\in\N$,
  which satisfy for all $k, n\in\N$,
  $s\in[t_0,T]$ that
  \begin{equation}
  \label{eq:X_diff_in_prob}
    \E\big[\norm{\mathcal{X}^{x_n,k}_{t_n,\max\{s,t_n\}}
    -\mathcal{X}^{x_0,k}_{t_0,\max\{s,t_0\}}}^2\big]
    \leq c_k \big[\abs{t_n-t_0}
    +\norm{x_n-x_0}^2\big].
  \end{equation}
  Furthermore, note that
  \eqref{eq:X_app}  
  shows that for all $k,n\in\N$,
  $s\in[t_0,T]$ it holds a.s.\! that
  \begin{equation}
  \label{eq:X_app_diff1}
  \begin{split}
    \mathcal{X}^{x_0,k}_{t_0,\max\{s,t_n\}}
    -\mathcal{X}^{x_0,k}_{t_0,\max\{s,t_0\}}
    =\int_s^{\max\{s,t_n\}} m_k(r,\mathcal{X}^{x_0,k}_{t_0,r})\,\d r
    +\int_s^{\max\{s,t_n\}} s_k(r,\mathcal{X}^{x_0,k}_{t_0,r})\,\d W_r.
  \end{split}
  \end{equation}
  Combining this with
  Minkowski's inequality,
  Itô's isometry,
  and the fact that $m_n$, $n\in\N$,
  and $s_n$, $n\in\N$,
  are continuous functions with
  compact support
  implies that for all $k,n\in\N$,
  $s\in[t_0,T]$ it holds that
  \begin{equation}
  \label{eq:X_app_diff2}
  \begin{split}
    &\Big(\E\Big[\normm{\mathcal{X}^{x_0,k}_{t_0,\max\{s,t_n\}}
    -\mathcal{X}^{x_0,k}_{t_0,\max\{s,t_0\}}}^2\Big]\Big)^{\frac{1}{2}}\\
    &\leq \int_s^{\max\{s,t_n\}}
    \Big(\E\Big[ \normm{m_k(r,\mathcal{X}^{x_0,k}_{t_0,r})}^2\Big]\Big)^{\frac{1}{2}}\,\d r
    +\bigg(\int_s^{\max\{s,t_n\}}
    \E\Big[ \normm{s_k(r,\mathcal{X}^{x_0,k}_{t_0,r})}^2\Big]\,\d r \bigg)^{\frac{1}{2}}\\
    &\leq \abs{\max\{t_n-s,0\}}^{\frac{1}{2}}
    \bigg[ \sqrt{T}\Big( \sup_{t\in [0,T]}\sup_{x\in O} \norm{m_k(t,x)}\Big)
    + \sup_{t\in [0,T]}\sup_{x\in O} \norm{s_k(t,x)}\Big) \bigg]
    <\infty.
  \end{split}
  \end{equation}
  The fact that for all $a,b\in\R$
  it holds that $(a+b)^2\leq 2(a^2+b^2)$
  and \eqref{eq:X_diff_in_prob}
  therefore show that there
  exist $\tilde{c}_k\in [0,\infty)$, 
  $k\in\N$, which satisfy 
  for all $k,n\in\N$, $s\in[t_0,T]$
  that
  \begin{equation}
  \label{eq:X_app_diff3}
  \begin{split}
    &\E\Big[\normm{\mathcal{X}^{x_n,k}_{t_n,\max\{s,t_n\}}
    -\mathcal{X}^{x_0,k}_{t_0,\max\{s,t_0\}}}^2\Big]\\
    &\leq 2\E\Big[\normm{\mathcal{X}^{x_n,k}_{t_n,\max\{s,t_n\}}
    -\mathcal{X}^{x_0,k}_{t_0,\max\{s,t_n\}}}^2\Big]
    +2\E\Big[\normm{\mathcal{X}^{x_0,k}_{t_0,\max\{s,t_n\}}
    -\mathcal{X}^{x_0,k}_{t_0,\max\{s,t_0\}}}^2\Big]\\
    &\leq \tilde{c}_k \big[\abs{t_n-t_0}
    +\norm{x_n-x_0}^2+\max\{t_n-s,0\}\big].
  \end{split}
  \end{equation}
  In addition, observe that
  \eqref{eq:X_app}
  shows that for all $k,n\in\N$,
  $s\in[0,t_0]$ it holds a.s.\! that
  \begin{equation}
  \begin{split}
  \label{eq:X_app_diff4}
    &\mathcal{X}^{x_n,k}_{t_n,\max\{s,t_n\}}
    -\mathcal{X}^{x_0,k}_{t_0,\max\{s,t_0\}}
    = \mathcal{X}^{x_n,k}_{t_n,\max\{s,t_n\}}
    -x_0\\
    &=x_n-x_0+\int_{t_n}^{\max\{s,t_n\}} m_k(r,\mathcal{X}^{x_n,k}_{t_n,r})\,\d r
    +\int_{t_n}^{\max\{s,t_n\}} s_k(r,\mathcal{X}^{x_n,k}_{t_n,r})\,\d W_r.
  \end{split}
  \end{equation}
  Minkowski's inequality,
  Itô's isometry
  and the fact that $m_n$, $n\in\N$,
  and $s_n$, $n\in\N$,
  are continuous functions 
  with compact support 
  ensure
  that for all $k,n\in\N$,
  $s\in[0,t_0]$ it holds that
  \begin{equation}
  \label{eq:X_app_diff5}
  \begin{split}
    &\Big(\E\Big[\normm{\mathcal{X}^{x_n,k}_{t_n,\max\{s,t_n\}}
    -\mathcal{X}^{x_0,k}_{t_0,\max\{s,t_0\}}}^2\Big]\Big)^{\frac{1}{2}}
    -\norm{x_n-x_0}\\
    &\leq 
    \int_{t_n}^{\max\{s,t_n\}}
    \Big(\E\Big[ \normm{m_k(r,\mathcal{X}^{x_n,k}_{t_n,r})}^2\Big]\Big)^{\frac{1}{2}}\,\d r
    +\bigg(\int_{t_n}^{\max\{s,t_n\}}
    \E\Big[ \normm{s_k(r,\mathcal{X}^{x_n,k}_{t_n,r})}^2\Big]\,\d r \bigg)^{\frac{1}{2}}\\
    &\leq 
    \abs{\max\{t_n-s,0\}}^{\frac{1}{2}}
    \bigg[ \sqrt{T}\Big( \sup_{t\in [0,T]}\sup_{x\in O} \norm{m_k(t,x)}\Big)
    + \sup_{t\in [0,T]}\sup_{x\in O} \norm{s_k(t,x)}\Big) \bigg]
    <\infty.
  \end{split}
  \end{equation}
  This and
  \eqref{eq:X_app_diff3}   
  imply that there
  exist $\gamma_k\in [0,\infty)$, 
  $k\in\N$, which satisfy 
  for all $k,n\in\N$, $s\in[0,T]$
  that
  \begin{equation}
  \begin{split}
  \label{eq:X_app_diff6}
    &\E\Big[\normm{\mathcal{X}^{x_n,k}_{t_n,\max\{s,t_n\}}
    -\mathcal{X}^{x_0,k}_{t_0,\max\{s,t_0\}}}^2\Big]\\
    &\leq \gamma_k \big[\abs{t_n-t_0}
    +\norm{x_n-x_0}^2
    +\mathbbm{1}_{[0,t_0]}(s)
    \max\{s-t_n,0\}
    +\mathbbm{1}_{[t_0,T]}(s)
    \max\{t_n-s,0\}\big].
  \end{split}
  \end{equation}
  Next note that the fact that
  $\limsup_{n\to\infty}[\abs{t_n-t_0}
  +\norm{x_n-x_0}]=0$
  ensures that there exists 
  a compact set
  $\tilde{\cK}\subseteq [0,T]\times O$
  which satisfies for all
  $n\in\N_0$ that
  $(t_n,x_n)\in \tilde{\cK}$.
  Markov's inequality,
  \eqref{eq:X_cont_ass},
  \eqref{eq:X_n_conv}, 
  and \eqref{eq:X_app_diff6}  
  hence
  show that for all 
  $\varepsilon\in(0,\infty)$, $s\in[0,T]$
  it holds that
  \begin{equation}
  \label{eq:X_app_diff7}
  \begin{split}
    &\limsup_{n\to\infty} \Big[
    \mathbb{P}\Big(\norm{X^{x_n}_{t_n,\max\{s,t_n\}}
    - X^{x_0}_{t_0,\max\{s,t_0\}}}\geq \varepsilon\Big) \Big]\\
    &\leq \inf_{k\in N} \bigg(\limsup_{n\to\infty} \bigg[
    \mathbb{P}\Big(\norm{X^{x_n}_{t_n,\max\{s,t_n\}}
    - \mathcal{X}^{x_n,k}_{t_n,\max\{s,t_n\}}} 
    \geq \frac{\varepsilon}{3} \Big)\\
    &\qquad + \mathbb{P}\Big(\norm{\mathcal{X}^{x_n,k}_{t_n,\max\{s,t_n\}}
    - \mathcal{X}^{x_0,k}_{t_0,\max\{s,t_0\}}} 
    \geq \frac{\varepsilon}{3} \Big)
    + \mathbb{P}\Big(\norm{\mathcal{X}^{x_0,k}_{t_0,\max\{s,t_0\}}
    - X^{x_0}_{t_0,\max\{s,t_0\}}} 
    \geq \frac{\varepsilon}{3} \Big)
    \bigg]\bigg)\\
    &\leq \inf_{k\in N} \bigg(\limsup_{n\to\infty} \bigg[
    \sup_{\substack{\tau\colon\Omega\to[t_n,T]\\ \text{stopping time}}}
    \mathbb{P}(\norm{X^{x_n}_{t_n,\tau}}\geq k)
    + \frac{9}{\varepsilon^2}
    \E\Big[\norm{\mathcal{X}^{x_n,k}_{t_n,\max\{s,t_n\}}
    - \mathcal{X}^{x_0,k}_{t_0,\max\{s,t_0\}}}^2\Big]\\
    &\qquad 
    + \sup_{\substack{\tau\colon\Omega\to[t_0,T]\\ \text{stopping time}}}
    \mathbb{P}(\norm{X^{x_0}_{t_0,\tau}}\geq k)
    \bigg]\bigg)\\
    &\leq \inf_{k\in N} \bigg(\limsup_{n\to\infty} \bigg[
    \sup_{\substack{\tau\colon\Omega\to[t_n,T]\\ \text{stopping time}}}
    \mathbb{P}(\norm{X^{x_n}_{t_n,\tau}}\geq k)
    + \sup_{\substack{\tau\colon\Omega\to[t_0,T]\\ \text{stopping time}}}
    \mathbb{P}(\norm{X^{x_0}_{t_0,\tau}}\geq k)\\
    &\qquad + \frac{9\gamma_k}{\varepsilon^2}
    (\abs{t_n-t_0}+\norm{x_n-x_0}^2
    +\mathbbm{1}_{[0,t_0]}(s)\max\{s-t_n,0\}
    +\mathbbm{1}_{[t_0,T]}(s)\max\{t_n-s,0\}) 
    \bigg]\bigg)\\
    &= \inf_{k\in\N} 
    \bigg(\limsup_{n\to\infty} 
    \Big[\sup_{\substack{\tau\colon\Omega\to[t_n,T]\\ \text{stopping time}}}
    \mathbb{P}(\norm{X^{x_n}_{t_n,\tau}}\geq k)
     +\sup_{\substack{\tau\colon\Omega\to[t_0,T]\\ \text{stopping time}}}
    \mathbb{P}(\norm{X^{x_0}_{t_0,\tau}}\geq k)\Big]
    \bigg)\\
    &\leq \inf_{k\in \N} 
    \bigg(\sup_{(u,y)\in \tilde{\cK}} 
    \Big[2\sup_{\substack{\tau\colon\Omega\to[u,T]\\ \text{stopping time}}}
    \mathbb{P}(\norm{X^{y}_{u,\tau}}\geq k)\Big]
    \bigg)
    =0.
  \end{split}
  \end{equation}
  This establishes \eqref{eq:X_cont}.
  The proof of Lemma~\ref{lem:X_cont_in_prob}
  is thus complete.  
\end{proof}

The next lemma is a consequence
of Lemma~\ref{lem:X_cont_in_prob}
and proves convergence in mean  for the stochastic process in \eqref{eq:Z_2}
which is the stochastic process
arising from the Bismut-Elworthy-Li 
formula.

\begin{lemma}\label{lem:Z_continuity}
  Assume Setting~\ref{setting:sfpe2},
  for every $r \in (0, \infty)$
  let $O_r \subseteq O$
  satisfy
  $O_r = \{  x \in O\colon \lvert \lvert x \rvert \rvert \leq r \text{ and } \{ y \in \R^d\colon \lvert \lvert y-x \rvert \rvert 
  < \frac{1}{r} \} \subseteq O \}$,
  assume for all $r\in (0,\infty)$,
  $j\in\{1,\ldots, d\}$ that
  \begin{equation}     
  \label{eq:der_mu_sigma_loclip}
  \begin{split}
  \sup \bigg(\bigg\{
  &\frac{\norm{\frac{\partial \mu}{\partial x}(t,x)-\frac{\partial \mu}{\partial x}(t,y)}_F
  +\norm{\frac{\partial \sigma}{\partial x_j}(t,x)
  -\frac{\partial \sigma}{\partial x_j}(t,y)}_F}{\norm{x-y}}\colon\\ 
  &\hspace{11em} t\in[0,T], x,y\in O_r, x\neq y \bigg\} 
  \cup \{0\} \bigg)
  <\infty,
  \end{split}
  \end{equation}
  and for every $t\in [0,T]$, 
  $x \in O$ let
  $Z^x_t = (Z^x_{t,s})_{s \in (t,T]} 
  \colon (t,T] \times \Omega \to \R^d$ 
  be an $(\mathbb{F}_s)_{s \in (t,T]}$-adapted stochastic process 
  with continuous sample paths
  satisfying that 
  for all $s \in (t,T]$ 
  it holds a.s.\!   that
  \begin{equation}
  \label{eq:Z_2}
    Z^x_{t,s} = \frac{1}{s-t} \int_t^s
  (\sigma(r,   X^x_{t,r}))^{-1} \; \Big(\frac{\partial}{\partial x} X^x_{t,r}\Big) \,\d W_r.
  \end{equation} 
  Then it holds for all  
  $t \in [0,T)$, $s\in(t,T]$, 
  $x\in O$ that
  \begin{equation}\label{eq:Z_cont}
    \limsup\nolimits_{[0,s)\times O \ni (u,y)\to (t,x)} 
    \Big[ 
     \E \big[ \norm{ Z^{y}_{u, s} 
    - Z^{x}_{t, s}} \big] \Big] 
    = 0.
  \end{equation}
\end{lemma}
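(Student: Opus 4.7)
The strategy is to reduce the claim to (i) convergence in probability of the relevant stochastic integrals and (ii) a uniform $L^2$ bound which allows one to upgrade convergence in probability to convergence in mean. Throughout, fix $t\in[0,T)$, $s\in(t,T]$, $x\in O$ and pick $\delta\in(0,s-t)$; it suffices to study $(u,y)\in[0,s-\delta]\times O$ with $(u,y)\to(t,x)$.

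First, I would decompose
\begin{equation}
Z^y_{u,s}-Z^x_{t,s}
=\Big(\tfrac{1}{s-u}-\tfrac{1}{s-t}\Big)\!\int_u^s\!\!\Phi^x_t(r)\,\d W_r
-\tfrac{1}{s-t}\!\int_t^u\!\!\Phi^x_t(r)\,\d W_r
+\tfrac{1}{s-u}\!\int_u^s\!\!\big(\Phi^y_u(r)-\Phi^x_t(r)\big)\,\d W_r,
\end{equation}
where $\Phi^y_u(r)=(\sigma(r,X^y_{u,r}))^{-1}\big(\tfrac{\partial}{\partial x}X^y_{u,r}\big)$ for $r\in[u,T]$ (with the convention that the second integral is replaced by its analogue over $[u,t]$ when $u<t$, using sign). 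By It\^o's isometry, Lemma~\ref{lem:Z_L_2_bd}\ref{it:Y_bd_st} gives a uniform $L^2$ bound on $\int_u^s\Phi^x_t(r)\,\d W_r$, so the first term tends to $0$ in $L^2$; and the second term has $L^2$--norm controlled by $\frac{1}{\alpha(s-t)^2}\E[\int_t^u\|\tfrac{\partial}{\partial x}X^x_{t,r}\|_F^2\,\d r]$, which vanishes as $u\to t$ by Lemma~\ref{lem:Z_L_2_bd}\ref{it:der_X_bd}.

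The main work is the third term. I would apply Lemma~\ref{lem:X_cont_in_prob} not to $X$ alone but to the joint $\R^d\times\R^{d\times d}$-valued process $(X^x_{t,\cdot},\tfrac{\partial}{\partial x}X^x_{t,\cdot})$, which solves a system of SDEs whose coefficients involve $\mu,\sigma,\partial_x\mu,\partial_x\sigma$. The local Lipschitz hypothesis \eqref{eq:der_mu_sigma_loclip} together with \eqref{eq:mu_sigma_globmon1} yields condition \eqref{eq:mu_sigma_loclip} for this system, while items~\ref{it:X_L2_bd} and \ref{it:der_X_bd} of Lemma~\ref{lem:Z_L_2_bd} (combined with Markov's inequality) supply the tightness condition \eqref{eq:X_cont_ass}. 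This produces convergence in probability of $(X^y_{u,r},\tfrac{\partial}{\partial x}X^y_{u,r})$ to $(X^x_{t,r},\tfrac{\partial}{\partial x}X^x_{t,r})$ for every $r\in[\max\{t,u\},s]$. Since $\sigma^{-1}$ is continuous on $\{\sigma:\,v^*\sigma\sigma^*v\ge\alpha\|v\|^2\}$, one obtains $\Phi^y_u(r)\to\Phi^x_t(r)$ in probability for every such $r$.

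To pass from pointwise-in-$r$ convergence in probability of $\Phi^y_u$ to convergence of the stochastic integral in $L^1$, I would localize by stopping times $\tau_R=\inf\{r\ge \min\{t,u\}:\|\tfrac{\partial}{\partial x}X^x_{t,r}\|_F+\|\tfrac{\partial}{\partial x}X^y_{u,r}\|_F>R\}\wedge s$ and use \eqref{eq:sigma_inv_bd}, Itô's isometry, and dominated convergence on $\{\tau_R=s\}$, while controlling $\mathbb P(\tau_R<s)$ uniformly in $(u,y)$ via Markov's inequality and Lemma~\ref{lem:Z_L_2_bd}\ref{it:der_X_bd}. This yields convergence of the third term in probability. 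Finally, item~\ref{it:Z_L2_bd} of Lemma~\ref{lem:Z_L_2_bd} provides a uniform $L^2$ bound on $Z^y_{u,s}-Z^x_{t,s}$ over $(u,y)$ in any neighbourhood of $(t,x)$ with $u\le s-\delta$, so the resulting family is uniformly integrable, and hence convergence in probability upgrades to convergence in mean, establishing \eqref{eq:Z_cont}.

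The principal obstacle is Step~3: applying Lemma~\ref{lem:X_cont_in_prob} to the enlarged system requires verifying the moment/tightness hypothesis \eqref{eq:X_cont_ass} for $(X,\partial_x X)$ jointly, and the localisation argument that transfers convergence in probability of the integrand to convergence in mean of the stochastic integral must be executed with care to accommodate the moving lower limit $u\to t$.
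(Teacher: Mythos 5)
Your proposal is correct and matches the paper's proof in every conceptually important respect: apply Lemma~\ref{lem:X_cont_in_prob} to the joint process $(X, \tfrac{\partial}{\partial x}X)$ using the regularity hypotheses on $\tfrac{\partial\mu}{\partial x}$ and $\tfrac{\partial\sigma}{\partial x_j}$, pass through the continuous map $\sigma\mapsto\sigma^{-1}$, and upgrade convergence in probability of the stochastic integral to convergence in mean via the uniform second-moment bounds from Lemma~\ref{lem:Z_L_2_bd}. Two differences of execution are worth noting. First, the paper sets $Y^x_{t,s}=\int_t^s(\sigma(r,X^x_{t,r}))^{-1}(\tfrac{\partial}{\partial x}X^x_{t,r})\,\d W_r$ and works from the two-term triangle bound $\lVert Z^y_{u,s}-Z^x_{t,s}\rVert\le(s-u)^{-1}\lVert Y^y_{u,s}-Y^x_{t,s}\rVert+\lvert(s-u)^{-1}-(s-t)^{-1}\rvert\,\lVert Y^x_{t,s}\rVert$; this sidesteps the case split $u<t$ versus $u\ge t$, which your three-term decomposition of the integral handles only for $u\ge t$ as written. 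Second, for the passage from integrand convergence in probability to convergence of the stochastic integral, the paper invokes \cite[Corollary 18.13]{Kallenberg2021} as a black box, whereas you propose a hands-on localization by stopping times together with dominated convergence; this is a sound alternative but, as you anticipate, needs care with the moving lower limit of integration and the uniform control of the tail $\mathbb{P}(\tau_R<s)$.
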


\begin{proof}[Proof of Lemma~\ref{lem:Z_continuity}]
  Throughout this proof
  let $\mathfrak{m}\in [0,\infty)$
  satisfy $\mathfrak{m}
  =\max_{s\in [0,T]}[\frac{1}{2}
  \norm{\mu(s,0)}^2
  +\norm{\sigma(s,0)}_F^2]$
  and
  for every $t\in [0,T]$, 
  $x \in O$ let
  $Y^x_t = (Y^x_{t,s})_{s \in [t,T]} 
  \colon [t,T] \times \Omega \to \R^d$ 
  be an $(\mathbb{F}_s)_{s \in [t,T]}$-adapted stochastic process 
  with continuous sample paths
  satisfying that 
  for all $s \in [t,T]$ 
  it holds a.s.\!   that
  \begin{equation}
    Y^x_{t,s} = \int_t^s
  (\sigma(r,   X^x_{t,r}))^{-1} \; \Big(\frac{\partial}{\partial x} X^x_{t,r}\Big) \,\d W_r.
  \end{equation}
  Next observe that 
  Markov's inequality and
  item~\ref{it:X_L2_bd} and
  \ref{it:der_X_bd} of 
  Lemma~\ref{lem:Z_L_2_bd}
  show that
  for all $t\in[0,T]$, $x\in O$,
  $\tau\colon\Omega\to [t,T]$
  stopping time it holds that
  \begin{equation}
  \begin{split}
    &\mathbb{P}\Big(\norm{X^x_{t,\tau}}
    +\normm{\frac{\partial }{\partial x}
    X^x_{t,\tau}}_F \geq k\Big)
    \leq \frac{1}{k^2} \E\Big[
    \big(\norm{X^x_{t,\tau}}
    +\normm{\frac{\partial }{\partial x}
    X^x_{t,\tau}}_F \big)^{2} \Big]\\
    &\leq \frac{2}{k^2} \E\Big[
    \norm{X^x_{t,\tau}}^2
    +\normm{\frac{\partial }{\partial x}
    X^x_{t,\tau}}_F^2 \Big]
    \leq \frac{2}{k^2}\Big(
    \exp((4c+2)T)
    \Big(\norm{x}^2+
    \frac{\mathfrak{m}}{2c+1}\Big)^2
    +d\exp(2cT)\Big).
  \end{split}
  \end{equation}
  This implies that 
  for all
  non-empty, compact
  $\cK\subseteq [0,T]\times O$
  it holds that 
  \begin{equation}
  \label{eq:X_der_X_cont_ass}
  \begin{split}
    &\inf_{k\in\N}\Big[\sup_{(t,x)\,\in \cK}\Big(
  \sup_{\tau\colon\Omega\to[t,T] \text{ stopping time}}
    \mathbb{P}\Big(\norm{X^{x}_{t,\tau}}+\normm{\frac{\partial }{\partial x}
    X^x_{t,\tau}}_F
    \geq k\Big)\Big)\Big]\\
    &\leq \inf_{k\in\N}\Big[\sup_{(t,x)\,\in \cK}\Big(
  \frac{2}{k^2}\Big(
    \exp((4c+2)T)
    \Big(\norm{x}^2+
    \frac{\mathfrak{m}}{2c+1}\Big)^2
    +d\exp(2cT)\Big) \Big)\Big]    
    =0.
  \end{split}
  \end{equation}    
  Lemma~\ref{lem:X_cont_in_prob}
  (applied with
  $d\curvearrowleft 2d$,
  $m\curvearrowleft d$,  
  $O\curvearrowleft O\times\R^d$,
  $\mu\curvearrowleft ([0,T]\times\R^d\times\R^d \ni (t,x,z) \mapsto (\mu(t,x),
  (\frac{\partial \mu}{\partial x})(t,x)z)\in\R^d\times\R^d)$,
   $\sigma\curvearrowleft ([0,T]\times\R^d\times\R^d\ni (t,x,z) \mapsto (\sigma(t,x),
  \sum_{i=1}^d (\frac{\partial \sigma}{\partial x_i})(t,x)z_i)\in\R^{d\times d}\times\R^{d\times d})$,
  and for every
  $j\in\{1,\ldots,d\}$,
  $t\in[0,T]$, $x\in O$
  with 
  $X^x_t\curvearrowleft ([t,T]\times \Omega \ni (s,\omega)\mapsto (X^x_{t,s}(\omega), \frac{\partial}{\partial x_j}X^x_{t,s}(\omega)) \in O\times \R^d)$
  in the notation of 
  Lemma~\ref{lem:X_cont_in_prob})
  and \eqref{eq:der_mu_sigma_loclip} 
  therefore demonstrate that
  for all 
  $j\in\{1,\ldots,d\}$,
  $\varepsilon\in(0,\infty)$,
  $s\in[0,T]$, and all 
  $(t_n,x_n)\in [0,T]\times O$, $n\in\N_0$,
  with $\limsup_{n\to\infty}[\abs{t_n-t_0}
  +\norm{x_n-x_0}]=0$
  it holds that
  \begin{equation}
  \label{eq:X_der_X_conv_in_prob}
  \begin{split}
    \limsup_{n\to\infty} \Big[\mathbb{P}
    \Big(\norm{X^{x_n}_{t_n,\max\{s,t_n\}}
    -X^{x_0}_{t_0,\max\{s,t_0\}}}
    +\normm{\frac{\partial }{\partial x_j}
    X^{x_n}_{t_n,\max\{s,t_n\}}
    -\frac{\partial }{\partial x_j}
    X^{x_0}_{t_0,\max\{s,t_0\}}}
    \geq \varepsilon\Big)\Big]
    =0.
  \end{split}
  \end{equation}  
  Furthermore, observe that 
  the assumption that
  for all $t\in[0,T]$,
  $x\in O$, $v\in\R^d$
  it holds that
  $v^* \sigma(t,x) (\sigma(t,x))^* v \geq \alpha \norm{v}^2$
  ensures that 
  for all $t\in [0,T]$, $x\in\R^d$
  it holds that
  $\sigma(t,x)(\sigma(t,x))^*$
  is invertible.
  Combining this with the fact
  that for all 
  $t\in [0,T]$, $x\in O$
  it holds that
  $\operatorname{rank}(\sigma(t,x)(\sigma(t,x))^*)
  =\operatorname{rank}(\sigma(t,x))$
  shows that for all
  $t\in [0,T]$, $x\in O$
  it holds that
  $\sigma(t,x)$ is invertible.
  Moreover, note that
  the assumption that $\sigma$
  is continuous 
  implies that $\sigma^{-1}$ 
  is continuous.
  Combining this with
  \eqref{eq:X_der_X_conv_in_prob}
  and the Continuous Mapping Theorem
  shows that for all 
  $\varepsilon\in (0,\infty)$,
  $s\in [0,T]$, 
  $(t_n,x_n)\in [0,T]\times O$, 
  $n\in\N$, with
  $\limsup_{n\to\infty}[\abs{t_n-t_0}
  +\norm{x_n-x_0}]=0$
  it holds that
  \begin{equation}
  \label{eq:sigma_inv_X_cont_in_prob}
    \limsup_{n\to\infty} \big[
    \mathbb{P}(
    \norm{(\sigma(\max\{s,t_n\}, X^{x_n}_{t_n,\max\{s,t_n\}}))^{-1}
    -(\sigma(\max\{s,t_0\}, X^{x_0}_{t_0,\max\{s,t_0\}}))^{-1}}\geq \varepsilon \big]
    =0.
  \end{equation}
   This and 
  \eqref{eq:X_der_X_conv_in_prob}
  ensure that for all
  $\varepsilon\in (0,\infty)$,
  $s\in [0,T]$, 
  $(t_n,x_n)\in [0,T]\times O$, 
  $n\in\N$, with
  $\limsup_{n\to\infty}[\abs{t_n-t_0}
  +\norm{x_n-x_0}]=0$
  it holds that
  \begin{equation}
  \label{eq:integrand_cont_in_prob}
  \begin{split}
    &\limsup_{n\to\infty} 
    \Bigg[\mathbb{P}\Big(
    \normm{
    (\sigma(\max\{s,t_n\}, X^{x_n}_{t_n,\max\{s,t_n\}}))^{-1}    
    \frac{\partial}{\partial x}X^{x_n}_{t_n,\max\{s,t_n\}}\\
    &\hspace{13em}
    -(\sigma(\max\{s,t_0\}, X^{x_0}_{t_0,\max\{s,t_0\}}))^{-1}    
    \frac{\partial}{\partial x}X^{x_0}_{t_0,\max\{s,t_0\}}}
    \geq \varepsilon \Big)\Big]
    =0. 
  \end{split}
  \end{equation}
  Next observe that the assumption that
  for all $r \in [t,T]$, $x \in O$,
  $v\in\R^d$ 
  it holds that
  $v^* \sigma(r,x) (\sigma(r,x))^* v  
  \geq \alpha \lvert \lvert v \rvert \rvert^2$ 
  ensures that for all 
  $t \in [0,T]$,
  $x \in O$, $v\in\R^d$
  it holds that
  \begin{equation}
  \norm{v}^2 = v^*  \left( \sigma(t, x) \right)^{-1} \sigma(t,x) ( \sigma(t, x))^* \left( (\sigma(t, x))^{-1} \right)^* v
  \geq \alpha \normm{ \left( (\sigma(t, x))^{-1} \right)^* v}^2. 
  \end{equation}
  Hence, we obtain for all  
  $t \in [0,T]$,
  $x \in O$
  that
  \begin{equation}
  \label{eq:sigma_inv_bd2}
  \lvert \lvert  (\sigma(t, x))^{-1}   \rvert \rvert^2_{L(\R^d)}
  =\sup \limits_{v \in \R^d \backslash \{ 0 \}} \frac{\lvert \lvert \left( (\sigma(t, x))^{-1} \right)^* v \rvert \rvert^2}{\lvert \lvert v \rvert \rvert^2} 
  \leq \frac{1}{\alpha}.
  \end{equation}
  Combining this with
  item~\ref{it:der_X_bd} of
  Lemma~\ref{lem:Z_L_2_bd}
  shows that for all
  $t\in [0,T]$,
  $s \in (t,T]$, $x \in \R^d$
  it holds that
  \begin{equation}
  \label{eq:sigma_inv_der_X_L^2_bd}
  \begin{split}
    &\E \left[ \normmm{ (\sigma(s, X^x_{t,s}))^{-1} 
    \Big(\frac{\partial}{\partial x} X^x_{t,s}\Big)}_F^2 \right] 
    \leq  \E \left[\normmm{(\sigma(s, X^x_{t,s}))^{-1} 
    \Big(\frac{\partial}{\partial x} X^x_{t,s}\Big)}^2_F \right] \\
    &\leq \E\left[ \normm{(\sigma(s, X^x_{t,s}))^{-1}}^2_{L(\R^d)}
    \normmm{\frac{\partial}{\partial x} X^x_{t,s}}^2_{F} \right] 
    \leq \frac{1}{\alpha} \E \left[ 
    \normmm{\frac{\partial}{\partial x} X^x_{t,s}}_F^2 \right]\\
    &= \frac{1}{\alpha}
    \E \left[ \normmm{\frac{\partial}{\partial x} X^x_{t,s}}_F^2\right]
    \leq  \frac{d}{\alpha}\exp(2cT).
  \end{split}
  \end{equation}
  Combining this and
  \eqref{eq:integrand_cont_in_prob}, 
  with
  \cite[Corollary 18.13]{Kallenberg2021}
  (applied for every $n\in\N$,
  $t\in [0,T]$, $s\in (t,T]$,
  $x\in O$, $(t_m)_{m\in\N}\subseteq [0,s)$ 
  which satisfies $\lim_{m\to\infty} t_m =t$,
  $(x_m)_{m\in\N}\subseteq O$ 
  which satisfies $\lim_{m\to\infty} x_m=x$,
  with
  $X\curvearrowleft W$,
  $U\curvearrowleft (\frac{d}{\alpha})^{\nicefrac{1}{2}} \exp(cT)$,
  $V \curvearrowleft
  (([0,T]\times\Omega\ni (s,\omega)
  \mapsto \sigma(s, X^x_{t,s}(\omega)))^{-1} 
  (\frac{\partial}{\partial x} X^x_{t,s}(\omega)\in \R^{d\times d}))$,
  $V_n \curvearrowleft
  (\Omega\times[0,T] \ni (\omega,s)\mapsto 
  (\sigma(s, X^{x_n}_{t_n,s}(\omega)))^{-1} 
  (\frac{\partial}{\partial x} X^{x_n}_{t_n,s}(\omega))\in\R^{d\times d})$
  in the notation of 
  \cite[Corollary 18.13]{Kallenberg2021})
  ensures that   
  for all  
  $\varepsilon\in(0,\infty)$,
  $s\in[0,T]$, and all 
  $(t_n,x_n)\in [0,T]\times O$, $n\in\N_0$,
  with $\limsup_{n\to\infty}[\abs{t_n-t_0}
  +\norm{x_n-x_0}]=0$ 
  it holds that
  \begin{equation}  
  \label{eq:Y_cont_in_prob}
  \begin{split}
    \limsup\nolimits_{n\to\infty} 
    \Big[ 
    \mathbb{P} \Big( \lvert \lvert 
     Y^{x_n}_{t_n, \max\{s,t_n\}} 
    - Y^{x_0}_{t_0, \max\{s,t_0\} } \rvert \rvert 
    \geq \varepsilon \Big)\Big] 
    = 0.
    \end{split}
  \end{equation}  
  In addition, note that 
  item~\ref{it:Y_bd_st}
  of Lemma~\ref{lem:Z_L_2_bd}
  ensures that
  for all $t\in [0,T]$,
  $x\in O$, $s\in [t,T]$ 
  it holds that
  \begin{equation}\label{eq:Y_L2_bd}
  \begin{split}
    &\E\big[\norm{Y^x_{t,s}}^2\big] 
    \leq \frac{dT}{\alpha}\exp(2cT). 
  \end{split}
  \end{equation}
  Next observe that Hölder's inequality
  demonstrates that
  for all $\varepsilon\in(0,\infty)$,
  $s\in (0,T]$, $u,t\in [0,s)$,
  $x,y\in O$
  it holds that
  \begin{equation}
  \begin{split}
    &\E\big[\norm{Y^y_{u,s}-Y^x_{t,s}}\big]
    \leq \varepsilon +
    \E\big[\norm{Y^y_{u,s}-Y^x_{t,s}}
    \mathbbm{1}_{\{\norm{Y^y_{u,s}-Y^x_{t,s}}>\varepsilon\}}\big]\\
    &\leq \varepsilon +
    \Big(\E\big[\norm{Y^y_{u,s}-Y^x_{t,s}}^2\big]\Big)^{\frac{1}{2}}
    \Big(\E\big[\mathbbm{1}_{\{\norm{Y^y_{u,s}-Y^x_{t,s}}>\varepsilon\}}\big]\Big)^{\frac{1}{2}}\\
    &\leq \varepsilon + 
    \Big(\E\big[\norm{Y^y_{u,s}-Y^x_{t,s}}^2\big]\Big)^{\frac{1}{2}}
    \Big(\mathbb{P}\big(\norm{Y^y_{u,s}-Y^x_{t,s}}>\varepsilon\big)\Big)^{\frac{1}{2}}.
  \end{split}
  \end{equation}
  Combining this with
  \eqref{eq:Y_cont_in_prob}
  and \eqref{eq:Y_L2_bd}
  proves
  that for all
  $t \in [0,T)$, $s\in[t,T]$, 
  $x\in O$ it holds that
  \begin{equation}\label{eq:Y_cont_in_mean}
    \limsup\nolimits_{[0,s)\times O \ni (u,y)\to (t,x)} 
    \Big[ 
     \E \big[ \norm{ Y^{y}_{u, s} 
    - Y^{x}_{t, s}} \big] \Big] 
    = 0.
  \end{equation}  
  Throughout the rest of the proof
  let $(t_0,x_0)\in [0,T]\times O$,
  $s_0 \in (t_0,T]$, and
  $(t_n,x_n)\in [0,s_0)\times O$, 
  $n\in\N$ satisfy that
  $\limsup_{n\to\infty}[\abs{t_n-t_0}
  +\norm{x_n-x_0}]=0$.
  Observe that for the proof of 
  \eqref{eq:Z_cont} it is sufficient
  to show that 
  \begin{equation}\label{eq:Z_cont_seq}
    \limsup_{n\to\infty} 
    \Big[ 
     \E \big[ \norm{ Z^{x_n}_{t_n, s_0} 
    - Z^{x_0}_{t_0, s_0}} \big] \Big] 
    = 0.
  \end{equation}  
  Next note that the fact that
  $\limsup_{n\to\infty}[\abs{t_n-t_0}]=0$
  ensures that there exists $r\in\N$
  which satisfies that 
  for all $n\in\N$ it holds that 
  $t_n\in [0,s_0-\frac{1}{r}]$.
  Hence, we obtain for all
  $n\in\N$ that
  \begin{equation}
  \begin{split} 
    &\E \big[ \norm{ Z^{x_n}_{t_n, s_0} 
    - Z^{x_0}_{t_0, s_0}} \big]
    =\E\bigg[\normmm{\frac{1}{s_0-t_n}Y^{x_n}_{t_n, s_0}
    -\frac{1}{s_0-t_0} Y^{x_0}_{t_0,s_0}} \bigg]\\
  &\leq \frac{1}{s_0-t_n}\E\big[
  \norm{Y^{x_n}_{t_n,s_0}
    -Y^{x_0}_{t_0,s_0}} \big]
    + \Big| \frac{1}{s_0-t_n}-
  \frac{1}{s_0-t_0}\Big|
  \,\E\big[\norm{Y^{x_0}_{t_0,s_0}}\big]\\    
    &\leq \sup_{u\in [0,T-\frac{1}{r}]}
    \Big[\frac{1}{s_0-u}\Big]
    \E\bigg[
  \norm{Y^{x_n}_{t_n,s_0}
    -Y^{x_0}_{t_0,s_0}} \bigg]
    +\Big| \frac{1}{s_0-t_n}-\frac{1}{s_0-t_0}\Big|
  \,\Big(\E\big[\norm{Y^{x_0}_{t_0,s_0}}^2 
    \big]\Big)^{\frac{1}{2}}.    
  \end{split}
  \end{equation} 
  Combining this with 
  \eqref{eq:Y_L2_bd}
  and \eqref{eq:Y_cont_in_mean}
  demonstrates \eqref{eq:Z_cont_seq}.
  The proof of Lemma~\ref{lem:Z_continuity}
  is thus complete.
\end{proof}

\subsection{Existence and uniqueness of solutions of SFPEs associated with SDEs}
\label{subse:Z_exist_uniq}

In this section we combine  
Theorem~\ref{thm:ex_cont_solution}
with the results in Section~\ref{subsec:Z_bounds}-\ref{subsec:Z_convergence}
to obtain our main result,
Theorem ~\ref{thm:ex_cont_sol_Z}.
Theorem ~\ref{thm:ex_cont_sol_Z} 
extends the abstract existence and
uniqueness result in
Theorem~\ref{thm:ex_cont_solution}
to SFPEs associated
with SDEs.
Theorem~\ref{thm:ex_cont_sol_Z}
generalizes \cite[Theorem 3.8]{BGHJ2019}
to the case of  gradient-dependent
nonlinearities.

\begin{theorem}
\label{thm:ex_cont_sol_Z}
  Assume Setting~\ref{setting:sfpe2},
  let 
  $b, K, L\in (0, \infty)$,
  let $\lvert\lvert\lvert\cdot\rvert\rvert\rvert\colon\R^{d+1}\to[0,\infty)$
  satisfy for all
  $x=(x_1,\ldots,x_{d+1})\in\R^{d+1}$
   that
  $\lvert\lvert\lvert x \rvert\rvert\rvert =\textstyle (\sum_{i=1}^{d+1} \abs{x_i}^2)^{1/2}$,
  for every $r \in (0, \infty)$
  let $K_r\subseteq [0,T)$,  
  $O_r \subseteq O$
  satisfy
  $K_r=[0,\max\{T-\frac{1}{r},0\}]$
  and
  $O_r = \{  x \in O\colon \lvert \lvert x \rvert \rvert \leq r \text{ and } \{ y \in \R^d\colon \lvert \lvert y-x \rvert \rvert 
  < \frac{1}{r} \} \subseteq O \}$,
  assume for all $r\in (0,\infty)$,
  $j\in\{1,\ldots, d\}$ that
  \begin{equation}     
  \label{eq:der_mu_sigma_loclip2}
  \begin{split}
  &\sup \bigg(\bigg\{
  \frac{\norm{\frac{\partial \mu}{\partial x}(t,x)-\frac{\partial \mu}{\partial x}(t,y)}_F
  +\norm{\frac{\partial \sigma}{\partial x_j}(t,x)
  -\frac{\partial \sigma}{\partial x_j}(t,y)}_F}{\norm{x-y}}
  \colon t\in[0,T], x,y\in O_r, x\neq y \bigg\} 
  \cup \{0\} \bigg)
  <\infty,
  \end{split}
  \end{equation}
  for every $t\in[0,T]$, 
  $x \in O$ let
  $Z^x_t = (Z^x_{t,s})_{s \in (t,T]} 
  \colon (t,T] \times \Omega \to \R^{d+1}$ 
  be an $(\mathbb{F}_s)_{s \in (t,T]}$-adapted
  stochastic process 
  with continuous sample paths
  satisfying that 
  for all $s \in (t,T]$ 
  it holds a.s.\! that
  \begin{equation}
    Z^x_{t,s} = 
    \begin{pmatrix}
      1\\
      \frac{1}{s-t} \int_t^s
  (\sigma(r,   X^x_{t,r}))^{-1} \; \Big(\frac{\partial}{\partial x} X^x_{t,r}\Big) \,\d W_r,
    \end{pmatrix}     
  \end{equation}   
  let $V \in C^{1,2}([0,T]\times O,(0, \infty))$
  satisfy a.s.\! for all 
  $t\in [0,T]$, $s\in[t,T]$, $x\in O$
  that 
  \begin{equation}\label{eq:V_ass}
  \begin{split}
    &(\tfrac{\partial V}{\partial s})(s, X^x_{t,s})
    +\langle(\tfrac{\partial V}{\partial x})(s, X^x_{t,s}),\mu(s, X^x_{t,s})\rangle
    +\tfrac{1}{2}\operatorname{Trace}(\sigma(s, X^x_{t,s})[\sigma(s, X^x_{t,s})]^*(\operatorname{Hess}_x V)(s, X^x_{t,s}))\\
    &\qquad +\tfrac{1}{2}\tfrac{\norm{(\frac{\partial V}{\partial x})(s, X^x_{t,s})\sigma(s, X^x_{t,s})}^2_F}{V(s, X^x_{t,s})}
    \leq K V(s, X^x_{t,s})+b,
  \end{split}
  \end{equation}
  let $f \in C([0,T) \times O \times \R^{d+1}, \R)$,
  $g \in C(O, \R)$ satisfy for all
  $t \in [0,T)$, $x \in O$, $v, w \in \R^{d+1}$
  that
  $\lvert f(t,x,v) - f(t,x,w) \rvert 
  \leq L\lvert\lvert\lvert  v-w\rvert\rvert \rvert $,
  and assume that
  $\inf_{r \in (0, \infty)} [ \sup_{t \in [0,T)\setminus K_r} $
  $\sup_{x \in O \setminus O_r} 
  (\frac{\lvert  g(x) \rvert}{V(T,x)} )
   +\frac{\lvert f(t,x,0) \rvert}{V(t,x)}
  \sqrt{T-t})] 
  = 0$,
  $\liminf_{r\to\infty}
  [\inf_{t\in [0,T]}
  \inf_{x\in O\setminus O_r}
  V(t,x)]=\infty$, 
  and 
  $\inf_{t\in[0,T]}\inf_{x\in O}
V(t,x)>0$.
  Then there exists a unique 
  $v \in C([0,T) \times O, \R^{d+1})$
such that
\begin{enumerate}[label=(\roman*)]
\item\label{it:cor_ex_cont_sol1}
it holds that
\begin{equation}\label{eq:cor_ex_cont_solution1}
\limsup \limits_{r \to \infty} \left[ \sup \limits_{t \in [0,T)\setminus K_r} \sup \limits_{x \in O \setminus O_r} \left( \frac{\lvert\lvert\lvert v(t,x)\rvert\rvert\rvert}{V(t,x)}  \, \sqrt{T-t} \right) \right] = 0,
\end{equation}
\item\label{it:cor_ex_cont_sol2}
 it holds that
  \begin{equation}\label{eq:cor_ex_cont_sol2}
     [0,T) \times O \ni (t,x) \mapsto 
     \E \left[  g(X^x_{t,T}) Z^x_{t,T}  
     + \int_t^T  f(r, X^x_{t,r}, v(r, X^x_{t,r})) Z^x_{t,r}  \,\d r \right] \in \R^{d+1}
  \end{equation}
is well-defined and continuous,
and
\item\label{it:cor_ex_cont_sol3}
for all
$t \in [0,T)$, $x \in O$
it holds that
\begin{equation}\label{eq:cor_ex_cont_solution2}
v(t,x) = \E \left[ g(X^x_{t,T})Z^x_{t,T} + \int_t^T f(r, X^x_{t,r}, v(r, X^x_{t,r}))Z^x_{t,r} \,\d r \right].
\end{equation}
\end{enumerate}
\end{theorem}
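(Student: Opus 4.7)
The plan is to apply the abstract existence and uniqueness result Theorem~\ref{thm:ex_cont_solution} with $m\curvearrowleft d+1$ and $\lvert\lvert\lvert\cdot\rvert\rvert\rvert$ the Euclidean norm on $\R^{d+1}$, together with the given $O$, $X$, $Z$, $f$, $g$, $V$. To do so I must verify the hypotheses of Setting~\ref{setting:sfpe}---above all, the key moment estimate $\E[V(s,X^x_{t,s})\lvert\lvert\lvert Z^x_{t,s}\rvert\rvert\rvert]\leq \frac{c}{\sqrt{s-t}}V(t,x)$---together with the continuity in probability and mean of $X$ and $Z$ in the initial data, the vanishing condition~\eqref{eq:bfpt_function}, and the remaining structural conditions on $f$, $g$, and $V$. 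The Lipschitz/growth conditions on $f$, $g$ and the bound $\inf V>0$ are given directly, while $\sup_{r\in(0,\infty)}[\inf_{t\in[0,T)\setminus K_r}\inf_{x\in O\setminus O_r}V(t,x)]=\infty$ is immediate from the stronger hypothesis $\liminf_{r\to\infty}\inf_{t\in[0,T]}\inf_{x\in O\setminus O_r}V(t,x)=\infty$.

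The moment bound is the heart of the argument. I would apply the Cauchy-Schwarz inequality to split $\E[V(s,X^x_{t,s})\lvert\lvert\lvert Z^x_{t,s}\rvert\rvert\rvert]\leq \sqrt{\E[V^2(s,X^x_{t,s})]}\sqrt{\E[\lvert\lvert\lvert Z^x_{t,s}\rvert\rvert\rvert^2]}$. The second factor is controlled via item~\ref{it:Z_L2_bd} of Lemma~\ref{lem:Z_L_2_bd}: the definition of $Z$ yields $\lvert\lvert\lvert Z^x_{t,s}\rvert\rvert\rvert^2 = 1+\lVert \tfrac{1}{s-t}\int_t^s(\sigma(r,X^x_{t,r}))^{-1}(\tfrac{\partial}{\partial x}X^x_{t,r})\,\d W_r\rVert^2$, whence $\E[\lvert\lvert\lvert Z^x_{t,s}\rvert\rvert\rvert^2]\leq C_1/(s-t)$ for $s-t\in(0,T]$. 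For the $V^2$ factor the structural hypothesis~\eqref{eq:V_ass} is crucial: the extra term $\tfrac{1}{2}\tfrac{\lVert(\tfrac{\partial V}{\partial x})\sigma\rVert_F^2}{V}$ is exactly tailored so that Itô's formula produces a drift of $V^2$ of the form
\begin{equation}
\begin{split}
  &(\tfrac{\partial V^2}{\partial s}) + \langle(\tfrac{\partial V^2}{\partial x}),\mu\rangle + \tfrac12\operatorname{Trace}(\sigma\sigma^*(\operatorname{Hess}_xV^2))\\
  &=2V\Big[(\tfrac{\partial V}{\partial s})+\langle(\tfrac{\partial V}{\partial x}),\mu\rangle+\tfrac12\operatorname{Trace}(\sigma\sigma^*(\operatorname{Hess}_xV))+\tfrac12\tfrac{\lVert(\tfrac{\partial V}{\partial x})\sigma\rVert_F^2}{V}\Big]\\
  &\leq 2V(KV+b)\leq (2K+1)V^2+b^2,
\end{split}
\end{equation}
where the last step uses $2bV\leq V^2+b^2$. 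A Grönwall argument---localized at exits from $O_r$, passed to the limit via Fatou's lemma, and using $\inf V>0$---then yields $\E[V^2(s,X^x_{t,s})]\leq C_2 V(t,x)^2$ uniformly for $0\leq t\leq s\leq T$ and $x\in O$; multiplying the two estimates gives the desired bound with $c:=\sqrt{C_1 C_2}$.

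For the continuity of $X^y_{u,s}$ in $(u,y)$ I invoke Lemma~\ref{lem:X_cont_in_prob}: its local Lipschitz hypothesis on $(\mu,\sigma)$ follows from $C^{0,1}$ regularity, and its tightness condition~\eqref{eq:X_cont_ass} is an immediate consequence of item~\ref{it:X_L2_bd} of Lemma~\ref{lem:Z_L_2_bd} combined with Markov's inequality. The mean-continuity of $Z$ is then supplied by Lemma~\ref{lem:Z_continuity} using the derivative-Lipschitz assumption~\eqref{eq:der_mu_sigma_loclip2}. For the vanishing condition~\eqref{eq:bfpt_function} I use the trivial bound $\mathbbm{1}_{O_u}\leq 1$, the consequence $\E[\lvert\lvert\lvert Z^x_{t,s}\rvert\rvert\rvert]\leq\sqrt{C_1/(s-t)}$ of the above, and $\int_t^T(s-t)^{-1/2}\,\d s=2\sqrt{T-t}$ to obtain
\begin{equation}
  \sqrt{T-t}\,\E\!\left[\int_t^T\mathbbm{1}_{O_u}(X^x_{t,s})\lvert\lvert\lvert Z^x_{t,s}\rvert\rvert\rvert\,(\d s+\delta_T(\d s))\right]\leq C_3(1+\sqrt{T-t}),
\end{equation}
which is bounded uniformly in $(t,x)$; dividing by $V(t,x)$ and taking the supremum over $x\in O\setminus O_r$ vanishes as $r\to\infty$ thanks to the divergence of $V(t,x)$ to infinity outside compacts.

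With all hypotheses of Theorem~\ref{thm:ex_cont_solution} verified, that theorem delivers the unique $v\in C([0,T)\times O,\R^{d+1})$ satisfying items~\ref{it:cor_ex_cont_sol1}--\ref{it:cor_ex_cont_sol3}. The main technical obstacle will be formalizing the Itô argument for $V^2$ in the second paragraph, that is, turning the pointwise generator inequality into a genuine moment estimate in the absence of a priori global bounds on $V$ and its derivatives; this requires careful localization at the exit times from the sets $O_r$ and a controlled passage to the limit.
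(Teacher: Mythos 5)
Your proposal is correct and follows essentially the same route as the paper: verify the hypotheses of Theorem~\ref{thm:ex_cont_solution} (the key moment estimate by Cauchy--Schwarz splitting into an $L^2$ bound for $V$ along the flow and an $L^2$ bound for $Z$ from Lemma~\ref{lem:Z_L_2_bd}, the continuity in the initial data via Lemmas~\ref{lem:X_cont_in_prob} and \ref{lem:Z_continuity}, and the vanishing condition~\eqref{eq:bfpt_function} from the uniform $Z$ bound and the divergence of $V$), then apply that abstract theorem. The one place where you re-derive a tool that the paper simply cites is the estimate $(\E[|V(\tau,X^x_{t,\tau})|^2])^{1/2}\lesssim V(t,x)$: your Itô identity for $V^2$, whose extra cross-variation term matches the last summand in~\eqref{eq:V_ass} precisely (this is why the coefficient $\tfrac12$ appears there), together with localization, Grönwall, and Fatou, is exactly the content of \cite[Theorem~2.4]{HHM2019}, which the paper invokes with $p=2$, $\alpha\curvearrowleft K$, $\beta\curvearrowleft b$; you correctly identified the localization as the only real technical work. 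A second, cosmetic difference: the paper obtains the tightness hypothesis of Lemma~\ref{lem:X_cont_in_prob} from the $V$-moment estimate combined with $\liminf_{r\to\infty}\inf_{O\setminus O_r}V=\infty$, whereas you propose to use item~\ref{it:X_L2_bd} of Lemma~\ref{lem:Z_L_2_bd} together with Markov's inequality; both are valid here and do not alter the structure of the argument.
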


\begin{proof}[Proof of Theorem~\ref{thm:ex_cont_sol_Z}]
  First observe that
  \cite[Theorem 2.4]{HHM2019}
  (applied for every
  $t\in[0,T]$, $x\in O$
  with 
  $H\curvearrowleft \R^d$,
  $U\curvearrowleft \R^d$,
  $T\curvearrowleft T-t$,
  $X\curvearrowleft ([0,T-t]\times\Omega\ni (s,\omega)\mapsto X^x_{t,t+s}(\omega)\in\R^d)$,
  $a\curvearrowleft ([0,T-t]\times\Omega \ni (s,\omega) \mapsto\mu(t+s,x)\in\R^d)$,
  $b\curvearrowleft ([0,T-t]\times\Omega\ni (s,\omega) \mapsto\sigma(t+s,x)\in\R^{d\times d})$,
  $p\curvearrowleft 2$,
  $\alpha\curvearrowleft ([0,T-t]\times\Omega \ni (s,\omega)\mapsto K \in [0,\infty))$,
  $\beta \curvearrowleft ([0,T-t]\times\Omega \ni (s,\omega)\mapsto b \in [0,\infty))$,
  $q_1\curvearrowleft 2$,
  $q_2\curvearrowleft \infty$
  in the notation of \cite[Theorem 2.4]{HHM2019})
  and \eqref{eq:V_ass}
  demonstrate that
  for all $t\in[0,T]$, $x\in\R^d$, and
  all stopping times $\tau\colon\Omega\to [t,T]$
  it holds that
  \begin{equation}
  \label{eq:exp_V_Z_ass}
  \begin{split}  
    &\Big(\E\big[\abs{V(\tau, X^x_{t,\tau})}^2
    \big]\Big)^{\frac{1}{2}}
    \leq \exp(KT)
    \Big(\E\big[\abs{V(t, x)}^2
    \big]\Big)^{\frac{1}{2}}
    +\frac{b}{K}(\exp(KT)-1)\\
    &= \exp(KT) \, V(t,x)
    +\frac{b}{K}(\exp(KT)-1).
  \end{split}
  \end{equation}
  Moreover, note that item~\ref{it:Z_L2_bd}
  of Lemma~\ref{lem:Z_L_2_bd}
  and the fact that for all $a,b\in\R$
  it holds that $\sqrt{a+b}
  \leq \sqrt{a}+\sqrt{b}$
  show that for all
  $t\in[0,T]$, $s\in(t,T]$, $x\in\R^d$
  it holds that  
  \begin{equation}\label{eq:Z_L2_bound1}
  \begin{split}
    &\Big(\E\big[\lvert\lvert\lvert Z^x_{t,s}\rvert\rvert\rvert^2\big]\Big)^{\frac{1}{2}}
    \leq 
    \bigg( 1+\frac{1}{\alpha(s-t)^2}
  \int_t^s d \exp(2(r-t)c) \,\d r
  \bigg)^{\frac{1}{2}}\\
  &\leq 
  \bigg(1+\frac{d}{\alpha(s-t)}
 \exp(2cT)\bigg)^{\frac{1}{2}}
  \leq 1+\bigg(\frac{d}{\alpha(s-t)} 
    \exp(2cT)\bigg)^{\frac{1}{2}}.
  \end{split}
  \end{equation}
  The Cauchy-Schwarz inequality
  and \eqref{eq:exp_V_Z_ass}
  therefore show that
  for all $t\in[0,T]$, $s\in (t,T]$, 
  $x\in O$
  it holds that
  \begin{equation}\label{eq:Z_L2_bound2}
  \begin{split}
    &\E\big[V(s, X^x_{t,s})\lvert\lvert\lvert
    Z^x_{t,s}\lvert\lvert\lvert\big]
    \leq \Big(\E\big[\abs{V(s,X^x_{t,s})}^2\big]\Big)^{\frac{1}{2}}
    \Big(\E\big[\lvert\lvert\lvert Z^x_{t,s}\lvert\lvert\lvert^2\big]\Big)^{\frac{1}{2}}\\
    &\leq \Big( \exp(KT) \, V(t,x)
    +\frac{b}{K}(\exp(KT)-1) \Big)
     \bigg[1+\bigg(\frac{d}{\alpha(s-t)} 
    \exp(2cT)\bigg)^{\frac{1}{2}}\bigg]\\
    &\leq \frac{1}{\sqrt{s-t}}\Big( 
    \exp(KT) \, V(t,x)
    +\frac{b}{K}(\exp(KT)-1) \Big)
     \bigg[\sqrt{T}+\bigg(\frac{d}{\alpha} 
    \exp(2cT)\bigg)^{\frac{1}{2}}\bigg]\\
    &\leq \frac{V(t,x)}{\sqrt{s-t}}\Big( 
    \exp(KT)
    +\frac{b}{K \,[\inf_{u\in [0,T]}
    \inf_{y\in O}V(u,y)]}(\exp(KT)-1) \Big)\\
    &\qquad \cdot
     \bigg[\sqrt{T}+\bigg(\frac{d}{\alpha} 
    \exp(2cT)\bigg)^{\frac{1}{2}}\bigg].
  \end{split}
  \end{equation}
  Furthermore, observe that
  for all $k\in\N$, $t\in[0,T]$, $x\in O$,
  $\tau\colon\Omega\to[t,T]$
  stopping time 
  it holds that
  \begin{equation}
  \begin{split}
    &\E[V(\tau, X^x_{t,\tau})]
    =\int_\Omega V(\tau(\omega), X^x_{t,\tau(\omega)}(\omega))
    \,\mathbb{P}(\d \omega)\\
    &\geq \int_{\{\tilde{\omega}\in\Omega\colon \norm{X^x_{t,\tau(\tilde{\omega})}(\tilde{\omega})}\geq k\}} 
    V(\tau(\omega), X^x_{t,\tau(\omega)}(\omega))
    \,\mathbb{P}(\d \omega)\\
    &\geq \Big[\inf_{s\in[t,T]}\inf_{y\in O\setminus O_k}
    V(s,y)\Big] \,
    \mathbb{P}(\norm{X^x_{t,\tau}}\geq k)
    \geq \Big[\inf_{s\in[0,T]}\inf_{y\in O\setminus O_k}
    V(s,y)\Big] \,
    \mathbb{P}(\norm{X^x_{t,\tau}}\geq k).
  \end{split}
  \end{equation}
  Combining this with 
  \eqref{eq:exp_V_Z_ass}
  implies that for all $k\in\N$,
  $t\in [0,T]$, $x\in O$, 
  $\tau\colon\Omega\to[t,T]$
  stopping time  
  it holds that
  \begin{equation}
  \begin{split}
    &\mathbb{P}(\norm{X^x_{t,\tau}}\geq k)
    \leq \frac{\E[V(\tau, X^x_{t,\tau})]}{\inf_{s\in[0,T]}\inf_{y\in O\setminus O_k}
    V(s,y)}
    \leq \frac{(\E[\abs{V(\tau, X^x_{t,\tau})}^2])^{\frac{1}{2}}}{\inf_{s\in[0,T]}\inf_{y\in O\setminus O_k}
     V(s,y)}\\
    &\leq \frac{\exp(KT) \, V(t,x)
    +\frac{b}{K}(\exp(KT)-1)}{\inf_{s\in[0,T]}\inf_{y\in O\setminus O_k}
     V(s,y)}.
  \end{split}
  \end{equation}
  The fact that
  $\sup_{r\in(0,\infty)}[\inf_{t\in[0,T]\inf_{x\in O\setminus O_r}}
  V(t,x)]=\infty$
  therefore proves that
  for all non-empty, compact
  $\mathcal{K}\subseteq [0,T]\times O$
  it holds that 
  \begin{equation}
  \label{eq:X_cont_ass3}
  \begin{split}
    &\inf_{k\in\N}\Big[\sup_{(t,x)\in \mathcal{K}}\Big(
  \sup_{\tau\colon\Omega\to[0,T] \text{ stopping time}}
    \mathbb{P}\big(\norm{X^{x}_{t,\tau}}\geq k\big)\Big)\Big]\\
    &\leq \inf_{k\in\N}\bigg[
  \frac{\exp(KT)[\sup_{(t,x)\in \mathcal{K}}V(t,x)]
  +\frac{b}{K}(\exp(KT)-1)}{\inf_{s\in[0,T]}\inf_{y\in O\setminus O_k}
    V(s,y)}\bigg]\\
    &\leq
  \frac{\exp(KT)[\sup_{(t,x)\in \mathcal{K}}V(t,x)]
  +\frac{b}{K}(\exp(KT)-1)}{\sup_{k\in\N}[\inf_{s\in[0,T]}\inf_{y\in O\setminus O_k}
    V(s,y)]}
    =0.
  \end{split}
  \end{equation}  
  Lemma~\ref{lem:X_cont_in_prob}
  and the assumption that
  $\mu\in C^{0,1}([0,T]\times O,\R^d)$
  and $\sigma\in C^{0,1}([0,T]\times O,\R^{d\times d})$
  hence demonstrate that for all
  $\varepsilon\in (0,\infty)$,
  $s\in [0,T]$, 
  $(t_n,x_n)\in [0,T]\times O$, 
  $n\in\N$, with
  $\limsup_{n\to\infty}[\abs{t_n-t_0}
  +\norm{x_n-x_0}]=0$
  it holds that
  \begin{equation}
  \label{eq:X_cont2}
    \limsup_{n\to\infty} \big[
    \mathbb{P}(
    \norm{X^{x_n}_{t_n,\max\{s,t_n\}}
    -X^{x_0}_{t_0,\max\{s,t_0\}}})
    >\varepsilon\big]
    =0.
  \end{equation}
  In addition, observe that
  Lemma~\ref{lem:Z_continuity}
  ensures that 
  for all  
  $t \in [0,T)$, $s\in(t,T]$, 
  $x\in O$ it holds that
  \begin{equation}
  \label{eq:Z_cont2}
    \limsup\nolimits_{[0,s)\times O \ni (u,y)\to (t,x)} 
    \Big[ 
     \E \big[ \lvert\lvert\lvert Z^{y}_{u, s} 
    - Z^{x}_{t, s}\rvert\rvert\rvert \big] \Big] 
    = 0.
  \end{equation}
  Next note that
  \eqref{eq:Z_L2_bound1}
  and the fact that for all
  $t\in[0,T]$ it holds that
  $\int_t^T \frac{1}{\sqrt{s-t}}\,\d s
  =2\sqrt{T-t}$
  demonstrate that
  \begin{equation}
  \begin{split}
    &\sup_{t \in [0,T)\setminus K_r} \sup_{x \in O \setminus O_r} \bigg[
    \E\Big[\int_t^T \lvert\lvert\lvert Z^x_{t,s}\rvert\rvert\rvert
    (\d s + \delta_T(\d s))\Big]
    \sqrt{T-t}\bigg] \\
    &\leq 
    \sup_{t\in [0,T)\setminus K_r}
    \sup_{x\in O\setminus O_r}
    \bigg[\int_t^T
   \Big(\E\big[\lvert\lvert\lvert Z^x_{t,s}\rvert\rvert\rvert^2 \big]\Big)^{\frac{1}{2}} (\d s +\delta_T(\d s))\sqrt{T-t}\bigg]\\
  &\leq 
  \sup_{t\in [0,T)\setminus K_r}
  \bigg[\int_t^T
  \bigg( 1+ \sqrt{\frac{d}{\alpha(s-t)}} 
  \exp(cT)\bigg)(\d s +\delta_T(\d s))\sqrt{T-t}\bigg]\\
  &= 
  \sup_{t\in [0,T)\setminus K_r}
  \bigg[\bigg( T-t +1
  + \sqrt{\frac{d}{\alpha}}\exp(cT)
  \Big(\int_t^T \frac{1}{\sqrt{s-t}}\d s 
  +\frac{1}{\sqrt{T-t}}\Big)\bigg)
  \sqrt{T-t}\bigg]\\
  &= \sup_{t\in [0,T)\setminus K_r}
  \bigg(\sqrt{T-t}(T-t+1)
  +\sqrt{\frac{d}{\alpha}}\exp(cT)
  (2(T-t)+1)\bigg)\\
  &\leq \sqrt{T}(T+1)
  +\sqrt{\frac{d}{\alpha}}\exp(cT)
  (2T+1).
  \end{split}
  \end{equation}
  The assumption that
  $\liminf_{r\to\infty}
  [\inf_{t\in [0,T]}
  \inf_{x\in O\setminus O_r}
  V(t,x)]=\infty$
  hence ensures that
  \begin{equation}
  \begin{split}
    &\limsup_{r \to\infty} \Bigg[ \sup_{t \in [0,T)\setminus K_r} \sup_{x \in O \setminus O_r} \Bigg(
    \frac{\E\Big[\int_t^T \lvert\lvert\lvert Z^x_{t,s}\rvert\rvert\rvert
    (\d s + \delta_T(\d s))\Big]
    \sqrt{T-t}}{V(t,x)}\Bigg) \Bigg]\\
    &\leq \limsup_{r \to\infty} \Bigg[
    \frac{\sup_{t \in [0,T)\setminus K_r} \sup_{x \in O \setminus O_r}\Big(
    \E\Big[\int_t^T \lvert\lvert\lvert Z^x_{t,s}\rvert\rvert\rvert
    (\d s + \delta_T(\d s))\Big]
    \sqrt{T-t}\Big)}{\inf_{t\in [0,T)\setminus K_r}
  \inf_{x\in O\setminus O_r}
  V(t,x)}\Bigg]\\
  &\leq \limsup_{r \to\infty} \Bigg[
  \frac{ \sqrt{T}(T+1)
  +\sqrt{\frac{d}{\alpha}}\exp(cT)
  (2T+1)}{\inf_{t\in [0,T)\setminus K_r}
  \inf_{x\in O\setminus O_r}
  V(t,x)}\Bigg]\\
  &\leq \frac{ \sqrt{T}(T+1)
  +\sqrt{\frac{d}{\alpha}}\exp(cT)
  (2T+1)}{\liminf_{r \to\infty}[\inf_{t\in [0,T]}
  \inf_{x\in O\setminus O_r}
  V(t,x)]}
  =0.
  \end{split}
  \end{equation}  
  Combining this,
  \eqref{eq:Z_L2_bound2},
  \eqref{eq:X_cont2}, and
  \eqref{eq:Z_cont2}
  with
  Theorem~\ref{thm:ex_cont_solution}
  establishes item~\ref{it:cor_ex_cont_sol1},
  \ref{it:cor_ex_cont_sol2},
  and \ref{it:cor_ex_cont_sol3}.
  The proof of Theorem~\ref{thm:ex_cont_sol_Z}
  is thus complete.
\end{proof}

\subsection*{Acknowledgements}
This work has been funded by the
Deutsche Forschungsgemeinschaft
(DFG, German Research Foundation)
through the research grant
HU1889/6-2.

\printbibliography


\end{document}